\newtheorem{thm}{Theorem}[section]
\newtheorem{definition}{Definition}[section]
\newtheorem{prp}[thm]{Proposition}
\newtheorem{cor}[thm]{Corollary}
\newtheorem{claim}[thm]{Claim}
\newcommand{\NP}{\ensuremath{\mathsf{NP}}\xspace}
\newcommand{\nof}{$\neg{\exists f}$}
\DeclareMathOperator{\gama}{{\displaystyle\gamma}}
\DeclareMathOperator{\gamat}{{\displaystyle\gamma}\;\!\!_{\it t}}
\DeclareMathOperator{\gamaR}{{\displaystyle\gamma}\;\!\!_{\it R}}
\DeclareMathOperator{\gamatwo}{{\displaystyle\gamma}_2}
\DeclareMathOperator{\gamawtwo}{{\displaystyle\gamma}\;\!\!_{{\it w}\!\!\:2}}
\DeclareMathOperator{\gamaxtwo}{{\displaystyle\gamma}\;\!\!_{{\scriptstyle \times} \! 2}}
\DeclareMathOperator{\rgamaxtwo}{\widetilde{\displaystyle\gamma}\;\!\!_{{\scriptstyle \times} \! 2}}
\DeclareMathOperator{\gamatxtwo}{{\displaystyle\gamma}\;\!\!_{{\it t}\!{\scriptstyle \times} \! 2}}
\DeclareMathOperator{\rgamatxtwo}{\widetilde{\displaystyle\gamma}\;\!\!_{{\it t}\!{\scriptstyle \times} \! 2}}
\DeclareMathOperator{\rgamawtwo}{\widetilde{\displaystyle\gamma}\;\!\!_{{\it w}\!\!\:2}}
\DeclareMathOperator{\rgamatwo}{{\widetilde{\displaystyle\gamma}_2}}
\DeclareMathOperator{\gamasettwo}{{\displaystyle\gamma}\;\!\!_{{\{2\}}}}
\DeclareMathOperator{\rgamasettwo}{\widetilde{\displaystyle\gamma}\;\!\!_{{\{2\}}}}
\DeclareMathOperator{\gamatsettwo}{{\displaystyle\gamma}\;\!\!_{{\it t}{\{2\}}}}
\DeclareMathOperator{\rgamatsettwo}{\widetilde{\displaystyle\gamma}\;\!\!_{{\it t}{\{2\}}}}
\DeclareMathOperator{\Hasse}{{\cal H}}
\title{Domination parameters with number $2$: \\interrelations and algorithmic consequences}
\author{Flavia Bonomo\thanks{Departamento de Computaci\'on, FCEyN, Universidad de Buenos Aires, and CONICET, Argentina. E-mail address: \texttt{fbonomo@dc.uba.ar}}
\and
Bo\v stjan Bre\v sar\thanks{Faculty of Natural Sciences and Mathematics, University of Maribor, Slovenia, and
Institute of Mathematics, Physics and Mechanics, Ljubljana, Slovenia.
E-mail address: \texttt{bostjan.bresar@um.si}}
\and
Luciano N.\ Grippo\thanks{Instituto de Ciencias, Universidad Nacional de General Sarmiento, Los Polvorines, Buenos Aires, Argentina. E-mail address: \texttt{lgrippo@ungs.edu.ar}}
\and
Martin Milani\v c\thanks{University of Primorska, UP IAM, Muzejski trg 2, SI-6000 Koper, Slovenia, and
University of Primorska, UP FAMNIT, Glagolja\v ska 8, SI-6000 Koper, Slovenia. E-mail address: \texttt{martin.milanic@upr.si}.
}
\and
Mart\'in D.\ Safe\thanks{Departamento de Matem\'atica, Universidad Nacional del Sur, Bah\'ia Blanca, Buenos Aires, Argentina, and Instituto de Ciencias, Universidad Nacional de General Sarmiento, Los Polvorines, Buenos Aires, Argentina. E-mail address: \texttt{msafe@uns.edu.ar}}}
\date{\today}
\begin{document}
\maketitle

\vspace{-0.5cm}
\begin{abstract}
In this paper, we study the most basic domination invariants in graphs, in which number~$2$
is intrinsic part of their definitions. We classify them upon three criteria, two of which
give the following previously studied invariants: the weak $2$-domination number,
${\displaystyle\gamma}\,\!\!_{w\!\!\:2}(G)$,
the \hbox{$2$-domination} number, $\gamatwo(G)$, the $\{2\}$-domination number, $\gamasettwo(G)$,
the double domination number, ${\displaystyle\gamma}\,\!\!_{{\scriptstyle \times} \! 2}(G)$, the total $\{2\}$-domination number, $\gamatsettwo(G)$,
and the total double domination number, $\gamatxtwo(G)$, where $G$ is a graph in which a corresponding
invariant is well defined. The third criterion yields rainbow versions of the mentioned six parameters,
one of which has already been well studied,
and three other give new interesting parameters. Together with a special, extensively studied Roman
domination, $\gamaR(G)$, and two classical parameters, the domination number, $\gamma(G)$,
and the total domination number, $\gamma_t(G)$, we consider $13$ domination invariants in graphs.
In the main result of the paper we present sharp upper and lower bounds of each of the invariants in terms
of every other invariant, a large majority of which are new results proven in this paper. As a consequence
of the main theorem we obtain new complexity results regarding the existence of approximation algorithms for the studied invariants,
matched with tight or almost tight inapproximability bounds, which hold even in the class of split graphs.
\end{abstract}

\medskip
\noindent
{\bf Keywords:} graph domination, total domination, rainbow domination, $2$-domination,
integer domination, double domination, split graph, approximation algorithm, inapproximability %\\

\medskip
\noindent
{\bf AMS subject classification (2010)}: 05C69, 05C85, 68R10, 68W25, 94C15

\newpage%%%%%%%%%%%%%%%%%%%%%%%%%%%%%%%%%%%%%%%%%%%%%%%%%%%%%%%%%%%%%%%%%%%%%5
\section{Introduction}

\subsection{Prologue}

A continuously growing interest in the area of graph domination,
which arises from both practical applications and combinatorial
challenges, has made the theory rather incoherent; two monographs
surveying domination theory were published almost twenty years ago
\cite{hhs2,hhs-1998}. Due to a large number of domination-type concepts,
it is not always easy to notice and appreciate some deep results
that capture a broad aspect of the theory. Several results
in domination theory have been in some sense rediscovered, because
an approach that works for one concept can often be used with some slight
adjustment for several other related concepts. We wish to make a step
in the direction of making the situation more transparent,
by classifying some of the most basic domination
invariants, in which number 2 is involved in the definition. We
make a comparison of their values in graphs between each pair of
them, and as a consequence, since the discovered translations
between parameters can be efficiently constructed, a general
approach that joins some algorithmic and complexity issues
on all of these concepts is established. In many cases our results
imply that an algorithm for one invariant gives a good approximation
algorithm for some other invariant; in addition, {strong} inapproximability
results are inferred for {almost all considered} parameters, which hold even
in the class of split graphs. (Let us mention that in \cite{bcf-2007} some connections
between a (different and smaller) group of domination parameters
has been established, yet the main focus was on claw-free graphs.)

\subsection{Classification of parameters}

The central focus of the paper is on several domination invariants of graphs,
which have number 2 appearing in their definition (in particular, vertices
must be dominated twice or using the sum of weights 2), and we can classify
them upon three different criteria. The first criterion
is the set of weights that are allowed to be assigned to vertices,
which can be either $\{0,1,2\}$ or only $\{0,1\}$ (in rainbow
versions, which we will consider in parallel, these weights can be
either $\{\emptyset, \{a\},\{b\},\{a,b\}\}$ or only $\{\emptyset,
\{a\},\{b\}\}$). The second criterion distinguishes three
possibilities with respect to the set of vertices that need to be
dominated,  and at the same time the type of neighborhoods, which
are considered in domination. The possibilities are as follows:
only vertices with weight 0 need to be dominated (`outer
domination'), all vertices need to be dominated and vertices with
a positive weight dominate their closed neighborhoods (`closed
domination'), and finally all vertices need to be dominated and
only open neighborhoods are dominated by vertices with positive
weight (`open domination'). The following table shows the six
concepts that arise from these two criteria, all of which have
already been studied in the literature (in parenthesis a standard
symbol of the corresponding graph invariant is written{\footnote{The
total double domination was also denoted by ${\displaystyle\gamma}\!\:_{\!{\scriptstyle \times} \! 2,t}$ in the literature,
and was also called the double total domination.}):

\bigskip

\begin{center}
\begin{tabular}{|c|c|c|}
  \hline
  % after \\: \hline or \cline{col1-col2} \cline{col3-col4} ...
   & $\{0,1,2\}$ & $\{0,1\}$ \\
   \hline \hline
  outer & weak $2$-domination ($\gamma_{w2}$) & $2$-domination ($\gamma_{2}$)\\
  \hline
  closed & $\{2\}$-domination ($\gamma_{\{2\}}$)  & double domination ($\gamaxtwo$)\\
  \hline
  open &  total $\{2\}$-domination ($\gamma_{t\{2\}}$) & total double domination ($\gamatxtwo$) \\
  \hline
\end{tabular}
\label{tab:criteria}
\end{center}

\bigskip

The third criterion is based on the so-called rainbow variations of these parameters,
and thus distinguishes domination parameters as being rainbow or not. This criterion is
motivated by the concept known as $k$-rainbow domination introduced in \cite{bhr-2008};
in the case $k=2$ the corresponding graph invariant was denoted by $\gamma_{r2}$,
see, e.g.,~\cite{bks-2007}. Note that in this paper the concept will be called rainbow weak $2$-domination,
and the invariant will be denoted by $\rgamawtwo$, suggesting that it is the rainbow counterpart
of the concept of weak $2$-domination,  whose graph invariant is denoted by $\gamawtwo$.
The $k$-rainbow domination (and $2$-rainbow domination, in particular) has been considered
in several papers~\cite{clw-2013,cwx-2010,chh-2014,pps-2012,rad-2011,sly-2014,srt-2013}, and is interesting also because of its strong connection
with the domination of Cartesian products of graphs; in fact, some initial results on the $2$-rainbow
domination number in \cite{hr-2004} were expressed in the terminology of domination of prisms.
In this paper we are mainly concerned with its conceptual features, which initiates several other rainbow
domination parameters. Intuitively speaking they are obtained as follows: weight $0$ is replaced by the label $\emptyset$, weight $1$ by
labels $\{a\}$ and $\{b\}$, and weight $2$ by the label $\{a,b\}$, while the conditions imposed by each parameter
are meaningfully adjusted to the rainbow version. The main difference is that instead of the sum of values of
weights, in a rainbow version one considers the union of labels, and also the condition of having weight $2$ in a neighborhood
corresponds to having label $\{a,b\}$.

Given a graph $G$ its weak $2$-domination number is denoted by ${\displaystyle\gamma}\,\,\!\!_{w\!\!\:2}(G)$,
its \hbox{$2$-domination} number by $\gamatwo(G)$, its $\{2\}$-domination number by $\gamasettwo(G)$, its double domination
number by $\gamaxtwo(G)$, its total $\{2\}$-domination number by $\gamatsettwo(G)$ and its total double domination
number by $\gamatxtwo(G)$.
(We remark that the notion of {\em weak $2$-domination} appeared in the literature also
under the name ``weak $2$-rainbow domination''~\cite{bks-2007}.) By the above reasoning
each of these parameters has its rainbow counter-part, which we will denote in a systematic way, by putting the symbol $\,\,{\widetilde{}}\,\,$
above $\gama$, indicating that we are considering the rainbow version of the known concept.
Two of the parameters among $\rgamatwo(G)$, $\rgamawtwo(G)$, $\rgamasettwo(G)$, $\rgamaxtwo(G)$, $\rgamatsettwo(G)$
and $\rgamatxtwo(G)$ (namely $\rgamasettwo(G)$ and $\rgamatsettwo(G)$) turn out to be
easily expressible by the known graph invariants, and we have thus not studied them any further.
We believe that other four rainbow domination parameters are worth of consideration.

There is yet another well studied domination parameter, which involves number $2$,
but does not directly fit into the above frame. Nevertheless, the so-called {\em Roman domination},
introduced in~\cite{st-1999} (see also~\cite{cdhh-2004,lc-2012}) has been considered in a number of papers,
and is conceptually relevant also to our study. In the condition of the Roman dominating function,
only the vertices with weight $0$ must have in the neighborhood a vertex with weight $2$,
while there is no such restriction for the vertices with weight $1$ and $2$.  Beside Roman domination,
whose parameter in denoted by $\gamma_R$,  we decided to include in our study also the two
classical domination concepts,  i.e.,  the {\em domination} and the {\em total domination},
denoted by $\gamma$ and $\gamma_t$, respectively.
Hence in our main result, see Table~\ref{table-bounds} (on p.~\pageref{table-bounds}),
thirteen domination parameters are mutually compared. To stay within a reasonable length of the paper (and to stay in line with the basic classification
presented in this paper) we do not consider other variations that also involve number 2 in their definitions.
In particular, we do not consider the concepts that arise from basic parameters by imposing additional
restrictions (such as paired domination~\cite{hs-1998}, independent Roman domination~\cite{atr-2012},
exact double domination \cite{ckm-2005}, etc.).

\subsection{Algorithmic complexity}

\vbox{The main result of this paper is the list of the sharp upper and lower bounds for each of the parameters,
expressed in terms of any other parameter. The comparison is not only interesting in its own right,
but also has several consequences regarding algorithmic and complexity properties of the invariants involved.}

For some of the invariants studied in this paper ${\sf NP}$-completeness of their decision problems was known in the literature.
In addition, for $\gama,\gamat,\gamatwo,$ $\gamaxtwo,\gamatxtwo$ it was known that
any {polynomial time} approximation of these values to within a multiplicative factor of $(1-\epsilon)\ln n$
is very unlikely even when restricted to $n$-vertex split graphs: {it would imply ${\sf P} = {\sf NP}$.} (See Section~\ref{sec:algo} for details.)
By using the main result of this paper we are able to infer such theorems about inapproximability in split
graphs for all but three considered invariants. For two of the remaining invariants
(namely, for rainbow $2$-domination, $\rgamatwo$, and rainbow double domination, $\rgamaxtwo$)
we obtain the same result using a direct reduction from the {\sc Set Cover} problem.
The only exception to the inapproximability bounds is the rainbow total double domination number, $\widetilde{\displaystyle\gamma}\!\:_{t\!{\scriptstyle \times} \! 2}$, for which
we prove that there is no polynomially computable function $f$ such that there exists an $f(n)$-approximation algorithm for
this invariant in an  $n$-vertex split graphs for which this parameter is finite, unless
${\sf P} = {\sf NP}$.
We prove this using a reduction from the
${\sf NP}$-complete {\sc Hypergraph $2$-Colorability} problem.

On a positive side, for all of the invariants studied in this paper we prove the existence of approximation algorithms
matching the logarithmic lower bound up to a constant factor, with an obvious exception of
$\widetilde{\displaystyle\gamma}\!\:_{t\!{\scriptstyle \times} \! 2}$ and two other parameters, $\rgamatwo$ and $\rgamaxtwo$,
for which this is still open.

\subsection{Organization of the paper}

{In Section~\ref{sec:definitions} we state the definitions of the parameters studied in this paper as well as some preliminaries on
three covering parameters in graphs, and summarize the definitions in Table~\ref{table-defs}.} In Section~\ref{sec:comparison} we present the main results, expressed in the $13\times 13$ table (Table~\ref{table-bounds}), in which
rows and columns represent the considered domination parameters, and each entry contains the upper bound of the row-parameter with respect to the column-parameter in the family of all graphs for which both parameters are finite. Since the diagonal elements are trivially just the equalities, this means that altogether we have $13\cdot 12=156$ sharp upper bounds between all pairs of parameters. Table~\ref{table-proofs} in the same section gives a road map for deduction of proofs, either by references to results in one of the next sections, or by references to the papers in which the results were proven, or (in many cases) by using transitivity.

%In Section~\ref{sec:covers} we prove a result, by using linear programming, which shows that the fractional edge-cover number coincides with
%the half-integer edge cover number. This result, which may be of independent interest, is then used in the next section to prove the bound of
%$\gamatsettwo$ in terms of $\gamawtwo$ (see~Proposition~\ref{prp:gamma-tset2-gamma-w2}).
In Section~\ref{sec:proofs-bounds} we make the comparison
of the parameters, by proving the upper bounds, if they exist, of parameters expressed as functions of other parameters. We omit the
proofs of most of such bounds that can be found in the literature, as well as of those that follow by transitivity from other bounds in Table~\ref{table-bounds}. Having in mind this optimization of the proofs, we only need to prove 17 propositions in this section.
Then, in Section~\ref{sec:sharp}, we present the values of the parameters in different families of graphs, some showing the sharpness of the bounds
in Table~\ref{table-bounds}, and some other showing that a particular parameter is not bounded by a function of another parameter.

In Section~\ref{sec:algo} we discuss the algorithmic and complexity consequences of the bounds obtained in Section~\ref{sec:comparison}, {proving new lower and upper bounds regarding the (in-)approximability of the corresponding optimization problems, subject to the ${\sf P}\neq {\sf NP}$ assumption.} We combine this with a survey on previously known (in-)approximability and NP-hardness results on these parameters.

\section{Definitions and preliminaries}\label{sec:definitions}

Unless stated otherwise, we consider finite, undirected, simple
graphs. Given a vertex $x\in V(G)$, $N(x)=\{v\in V(G)\mid xv\in
E(G)\}$ denotes its {\em (open) neighborhood}, and $N[x]:=
N(x)\cup \{x\}$ is the {\em closed neighborhood} of $x$. For a
graph $G$ and $X\subseteq V(G)$, we write $N(X)$ for
$(\bigcup_{v\in X}N(v))\setminus X$. As usual, $\Delta(G)$ and
$\delta(G)$ stand for the maximum, resp.~the minimum degree of
vertices in $G$.
%%%%%%%%

Let $f:V(G)\to X$ be a function such that $X$ is either a set of real numbers or a set of finite sets.
For an arbitrary subset $W\subseteq V(G)$,
we denote its {\em weight with respect to $f$} (or just {\em weight} when $f$ is clear from the context) by
$$f(W) = \sum_{w\in W}|f(w)|\,,$$
where the notation $|r|$ denotes either the cardinality of $r$ (if $r$ is a set), or
the absolute value of $r$ (if $r$ is a real number).
For a function $f:V(G)\to X$, where $X$ is an arbitrary set of finite sets,
we denote
$$f_{\cup}(W) = \bigcup_{w\in W}f(w)\,.$$

Next, we present definitions of all the invariants studied in the paper.
Whenever an invariant is not defined for all graphs, i.e.,
if there is a graph $G$ for which no function satisfying the corresponding constraints exists,
we use the convention of stating that the value of the invariant in $G$ is infinite.

\bigskip

\vbox{
\noindent{\bf Domination, total domination.}

%%%%%%%%%%%%%%%%%%%%%%%%%%%%%%%%%%%%%%%%%%%%%%%%%%
% Domination
%%%%%%%%%%%%%%%%%%%%%%%%%%%%%%%%%%%%%%%%%%%%%%%%%%

\begin{definition}
($\boldsymbol{\gama}$, row/column {\bf 1} in
Table~\ref{table-bounds})

Let $G=(V,E)$ be a graph. A {\em dominating function of $G$} is a function $\boldsymbol{f:V\to\{0,1\}}$
such that for all $v\in V(G)$ it holds that
$$\boldsymbol{f(N[v])\ge 1}\,.$$ Equivalently,
$$f(v) = 0~\Longrightarrow~f(N(v))\ge 1\,.$$
The {\em domination number of $G$} is denoted by
$\boldsymbol{\gama(G)}$ and equals the minimum weight $f(V)$ over
all dominating functions $f$ of $G$.

Any set of the form $D = \{v\in V\mid f(v) = 1\}$
where $f$ is a dominating function of $G$
is said to be a {\em dominating set} of $G$.
Note that the minimum size of a dominating set equals $\gama(G)$.
\end{definition}}

Domination number is one of the classical graphs invariants;
together with several of its variations it was surveyed in two monographs~\cite{hhs2,hhs-1998}.

%%%%%%%%%%%%%%%%%%%%%%%%%%%%%%%%%%%%%%%%%%%%%%%%%%
% Total domination
%%%%%%%%%%%%%%%%%%%%%%%%%%%%%%%%%%%%%%%%%%%%%%%%%%

\begin{definition}
($\boldsymbol{\gamat}$, row/column {\bf 2} in
Table~\ref{table-bounds})

Let $G=(V,E)$ be a graph. A {\em total dominating function of $G$} is a function $\boldsymbol{f:V\to\{0,1\}}$
such that for all $v\in V(G)$ it holds that
$$\boldsymbol{f(N(v))\ge 1}\,.$$
The {\em total domination number of $G$} is denoted by
$\boldsymbol{\gamat(G)}$ and equals the minimum weight $f(V)$ over
all total dominating functions $f$ of $G$.

Any set of the form $D = \{v\in V\mid f(v) = 1\}$
where $f$ is a total dominating function of $G$
is said to be a {\em total dominating set} of $G$.
Note that the minimum size of a total dominating set equals $\gamat(G)$.
\end{definition}

Clearly, the total domination number is well-defined (i.e.\ is finite)
in graphs with no isolated vertices.
The recent monograph \cite{HYbook} presents a thorough survey on total domination theory.

\bigskip

\vbox{
\noindent{\bf Weak $2$-domination, rainbow weak $2$-domination.}
%%%%%%%%%%%%%%%%%%%%%%%%%%%%%%%%%%%%%%%%%%%%%%%%%%
% weak 2-domination
%%%%%%%%%%%%%%%%%%%%%%%%%%%%%%%%%%%%%%%%%%%%%%%%%%

\begin{sloppypar}
\begin{definition}($\boldsymbol{\gamawtwo}$, row/column {\bf 3} in
Table~\ref{table-bounds})

Let $G=(V,E)$ be a graph. A {\em weak $2$-dominating function of
$G$} is a function $\boldsymbol{f:V\to\{0,1,2\}}$ such that for
all $v\in V(G)$ it holds that
$$\boldsymbol{f(v) = 0~\Longrightarrow~f(N(v)) \ge 2}\,.$$
The {\em weak $2$-domination number of $G$} is denoted by
$\boldsymbol{\gamawtwo(G)}$ and equals the minimum weight $f(V)$
over all weak $2$-dominating functions $f$ of $G$.
\end{definition}
\end{sloppypar}}

%%%%%%%%%%%%%%%%%%%%%%%%%%%%%%%%%%%%%%%%%%%%%%%%%%
% rainbow weak 2-domination
%%%%%%%%%%%%%%%%%%%%%%%%%%%%%%%%%%%%%%%%%%%%%%%%%%

\begin{sloppypar}
\begin{definition}($\boldsymbol{\rgamawtwo}$, row/column {\bf 9} in
Table~\ref{table-bounds})

Let $G=(V,E)$ be a graph. A {\em rainbow weak $2$-dominating
function of $G$} is a function $\boldsymbol{f:V\to{\cal
P}(\{a,b\})}$ such that for all $v\in V(G)$ it holds that
$$\boldsymbol{f(v) = \emptyset~\Longrightarrow |f_\cup(N(v))| \ge 2}\,.$$
Equivalently, for all $v\in V(G)$ with $f(v) = \emptyset$, it
holds that $f_\cup(N(v)) = \{a,b\}$. The {\em rainbow weak
$2$-domination number of $G$} is denoted by
$\boldsymbol{\rgamawtwo(G)}$ and equals the minimum weight $f(V)$
over all rainbow weak $2$-dominating functions $f$ of $G$.
\end{definition}
\end{sloppypar}

Rainbow weak $2$-domination was introduced less than 10 years ago in~\cite{bhr-2008},
under the name {\em $2$-rainbow domination}; it has already been considered
in a number of papers. Weak $2$-domination was studied in~\cite{bks-2007} with
the aim to give more insight in the (weak) $2$-rainbow domination. It has probably
been known before, although we were unable to find a reference confirming it.
Weak $2$-domination should not be confused with the concept of weak domination, as introduced in~\cite{sala-96}.

\bigskip

\vbox{\noindent{\bf $\{2\}$-domination, rainbow $\{2\}$-domination.}
%%%%%%%%%%%%%%%%%%%%%%%%%%%%%%%%%%%%%%%%%%%%%%%%%%
% {2}-domination
%%%%%%%%%%%%%%%%%%%%%%%%%%%%%%%%%%%%%%%%%%%%%%%%%%

\begin{sloppypar}
\begin{definition}($\boldsymbol{\gamasettwo}$, row/column {\bf 4} in
Table~\ref{table-bounds})

 Let $G=(V,E)$ be a graph. A {\em
$\{2\}$-dominating function of $G$} is a function
$\boldsymbol{f:V\to\{0,1,2\}}$ such that for all $v\in V(G)$ it
holds that $$\boldsymbol{f(N[v]) \ge 2}\,.$$ The {\em
$\{2\}$-domination number of $G$} is denoted by
$\boldsymbol{\gamasettwo(G)}$ and equals the minimum weight $f(V)$
over all $\{2\}$-dominating functions $f$ of $G$.
\end{definition}
\end{sloppypar}}

The concept of $\{2\}$-domination was introduced in 1991~\cite{dhl-1991},
and considered later on in several papers. In particular, several recent papers
consider the variation of Vizing's conjecture on the domination number of Cartesian products
of graphs with respect to this domination invariant, see~\cite{bhk-2007,cmh-15,hl-2009,ns-2013}.

%%%%%%%%%%%%%%%%%%%%%%%%%%%%%%%%%%%%%%%%%%%%%%%%%%
% rainbow {2}-domination
%%%%%%%%%%%%%%%%%%%%%%%%%%%%%%%%%%%%%%%%%%%%%%%%%%

\begin{definition}
Let $G=(V,E)$ be a graph. A {\em rainbow $\{2\}$-dominating
function of $G$} is a function $\boldsymbol{f:V\to{\cal
P}(\{a,b\})}$ such that for all $v\in V(G)$ it holds that
$$\boldsymbol{|f_\cup(N[v])| \ge 2}\,.$$ Equivalently, $f_\cup(N[v]) = \{a,b\}$
holds for all $v\in V(G)$. The {\em rainbow $\{2\}$-domination
number of $G$} is denoted by $\boldsymbol{\rgamasettwo(G)}$ and
equals the minimum weight $f(V)$ over all rainbow
$\{2\}$-dominating functions $f$ of $G$.
\end{definition}

It is easy to see that the rainbow $\{2\}$-domination number is
closely related to the domination number. Indeed, for every graph
$G$, it holds that $\boldsymbol{\rgamasettwo(G) = 2\gama(G)}$.
Hence, we will not discuss this parameter any further in the rest
of the paper, except briefly in Sections~\ref{sec:comparison} and~\ref{sec:algo}
(in Theorems~\ref{thm:inapprox-lower-bound} and~\ref{thm:approx-upper-bound}).

\bigskip

\noindent{\bf Total $\{2\}$-domination, rainbow total $\{2\}$-domination.}
%%%%%%%%%%%%%%%%%%%%%%%%%%%%%%%%%%%%%%%%%%%%%%%%%%
% total {2}-domination
%%%%%%%%%%%%%%%%%%%%%%%%%%%%%%%%%%%%%%%%%%%%%%%%%%

\begin{sloppypar}
\begin{definition}($\boldsymbol{\gamatsettwo}$, row/column {\bf 5} in
Table~\ref{table-bounds})

Let $G=(V,E)$ be a graph. A {\em total $\{2\}$-dominating function
of $G$} is a function $\boldsymbol{f:V\to\{0,1,2\}}$ such that for
all $v\in V(G)$ it holds that $$\boldsymbol{f(N(v)) \ge 2}\,.$$
The {\em total $\{2\}$-domination number of $G$} is denoted by
$\boldsymbol{\gamatsettwo(G)}$ and equals the minimum weight
$f(V)$ over all total $\{2\}$-dominating functions $f$ of $G$.
\end{definition}
\end{sloppypar}

Clearly, the total $\{2\}$-domination number is finite precisely in
graphs with no isolated vertices. While the concept has been known for some time,
see two recent papers on the total $\{k\}$-domination~\cite{asv-2013,lh-2009}.

%%%%%%%%%%%%%%%%%%%%%%%%%%%%%%%%%%%%%%%%%%%%%%%%%%
% rainbow total {2}-domination
%%%%%%%%%%%%%%%%%%%%%%%%%%%%%%%%%%%%%%%%%%%%%%%%%%

\begin{definition}
Let $G=(V,E)$ be a graph. A {\em rainbow total $\{2\}$-dominating
function of $G$} is a function $\boldsymbol{f:V\to{\cal
P}(\{a,b\})}$ such that for all $v\in V(G)$ it holds that
$$\boldsymbol{|f_\cup(N(v))| \ge 2}\,.$$ Equivalently, $f_\cup(N(v)) = \{a,b\}$
holds for all $v\in V(G)$. The {\em rainbow total
$\{2\}$-domination number of $G$} is denoted by
$\boldsymbol{\rgamatsettwo(G)}$ and equals the minimum weight
$f(V)$ over all rainbow total $\{2\}$-dominating functions $f$ of
$G$.
\end{definition}

Similarly as the rainbow $\{2\}$-domination number is related to
the domination number via the relation $\rgamasettwo(G) =
2\gama(G)$, the rainbow total $\{2\}$-domination number is related
to the total domination number via the relation
$\boldsymbol{\rgamatsettwo(G) = 2\gamat(G)}$. Hence, we will not discuss this parameter any further in the rest
of the paper, except briefly in Sections~\ref{sec:comparison} and~\ref{sec:algo} (in Theorems~\ref{thm:inapprox-lower-bound} and~\ref{thm:approx-upper-bound}).

\bigskip

\noindent{\bf $2$-domination, rainbow $2$-domination.}

%%%%%%%%%%%%%%%%%%%%%%%%%%%%%%%%%%%%%%%%%%%%%%%%%%
% 2-domination
%%%%%%%%%%%%%%%%%%%%%%%%%%%%%%%%%%%%%%%%%%%%%%%%%%

\begin{definition}($\boldsymbol{\gamatwo}$, row/column {\bf 6} in
Table~\ref{table-bounds})

Let $G=(V,E)$ be a graph. A {\em $2$-dominating function of $G$}
is a function $\boldsymbol{f:V\to\{0,1\}}$ such that for all $v\in
V(G)$ it holds that
$$\boldsymbol{f(v) = 0~\Longrightarrow~f(N(v)) \ge 2}\,.$$
The {\em $2$-domination number of $G$} is denoted by
$\boldsymbol{\gamatwo(G)}$ and equals the minimum weight $f(V)$
over all $2$-dominating functions $f$ of $G$.

Any set of the form $D = \{v\in V\mid f(v) = 1\}$
where $f$ is a $2$-dominating function of $G$
is said to be a {\em $2$-dominating set} of $G$.
Note that the minimum size of a $2$-dominating set equals $\gamatwo(G)$.
\end{definition}

The concept of $k$-domination (and $2$-domination in particular) was introduced back in 1985~\cite{fj-1985},
and was later studied quite extensively, see some recent papers~\cite{cp-2014,dgh-2011,fhv-2008,hp-2013}.

%%%%%%%%%%%%%%%%%%%%%%%%%%%%%%%%%%%%%%%%%%%%%%%%%%
% rainbow 2-domination
%%%%%%%%%%%%%%%%%%%%%%%%%%%%%%%%%%%%%%%%%%%%%%%%%%

\begin{sloppypar}
\begin{definition}($\boldsymbol{\rgamatwo}$, row/column {\bf 10} in
Table~\ref{table-bounds})

Let $G=(V,E)$ be a graph. A {\em rainbow $2$-dominating function
of $G$} is a function
$\boldsymbol{f:V\to\{\emptyset,\{a\},\{b\}\}}$ such that
$$\boldsymbol{f(v) = \emptyset~\Longrightarrow~|f_\cup(N(v))| \ge 2}\,.$$
%$$\textrm{ for all $v\in V(G)$ such that $f(v) = \emptyset$, it holds that }|f_\cup(N(v))| \ge 2\,.$$
Equivalently, for all $v\in V(G)$ with $f(v) = \emptyset$, it
holds that $f_\cup(N(v)) = \{a,b\}$. The {\em rainbow
$2$-domination number of $G$} is denoted by
$\boldsymbol{\rgamatwo(G)}$ and equals the minimum weight $f(V)$
over all rainbow $2$-dominating functions $f$ of $G$.
\end{definition}
\end{sloppypar}

\bigskip
\vbox{\noindent{\bf Double domination, rainbow double domination.}
%%%%%%%%%%%%%%%%%%%%%%%%%%%%%%%%%%%%%%%%%%%%%%%%%%
% double domination
%%%%%%%%%%%%%%%%%%%%%%%%%%%%%%%%%%%%%%%%%%%%%%%%%%

\begin{sloppypar}
\begin{definition} ($\boldsymbol{\gamaxtwo}$, row/column {\bf 7} in
Table~\ref{table-bounds})

Let $G=(V,E)$ be a graph. A {\em double dominating function of
$G$} is a function $\boldsymbol{f:V\to\{0,1\}}$ such that for all
$v\in V(G)$ it holds that $$\boldsymbol{f(N[v]) \ge 2}\,.$$ The
{\em double domination number of $G$} is denoted by
$\boldsymbol{\gamaxtwo(G)}$ and equals the minimum weight $f(V)$
over all double dominating functions $f$ of $G$.

Any set of the form $D = \{v\in V\mid f(v) = 1\}$
where $f$ is a double dominating function of $G$
is said to be a {\em double dominating set} of $G$.
Note that the minimum size of a double dominating set equals $\gamaxtwo(G)$.
\end{definition}
\end{sloppypar}}

Double domination number is finite in graphs without isolated vertices.
It was introduced in~\cite{hh-2000} (see also~\cite{haha-96}),
and was studied by a number of authors; consider for instance some
recent papers~\cite{bmp-13,dhh-2014,dhv-13,kr-13}.

%%%%%%%%%%%%%%%%%%%%%%%%%%%%%%%%%%%%%%%%%%%%%%%%%%
% rainbow double domination
%%%%%%%%%%%%%%%%%%%%%%%%%%%%%%%%%%%%%%%%%%%%%%%%%%

\begin{sloppypar}
\begin{definition}
($\boldsymbol{\rgamaxtwo}$, row/column {\bf 11} in
Table~\ref{table-bounds})

Let $G=(V,E)$ be a graph.
A {\em rainbow double dominating function of $G$} is a function $\boldsymbol{f:V\to\{\emptyset,\{a\},\{b\}\}}$
such that for all $v\in V(G)$ it holds that
$$\boldsymbol{|f_\cup(N[v])| \ge 2}\,.$$
Equivalently, $f_\cup(N[v]) = \{a,b\}$ holds for all $v\in V(G)$.
The {\em rainbow double domination number of $G$} is denoted by
$\boldsymbol{\rgamaxtwo(G)}$ and equals the minimum weight $f(V)$
over all rainbow double dominating functions $f$ of $G$.
\end{definition}
\end{sloppypar}

\bigskip
Note that every graph without isolated vertices has domatic number at least $2$, which means that it admits a domatic $2$-partition, that is, a partition of its vertex set into two dominating sets. (The domatic number was introduced in a paper from 1970's~\cite{ch-1977}, extensively studied afterwards, and surveyed in~\cite{zel-1998}.) To see this, note that for any maximal independent set $S$ in the graph, the pair $(S,V\setminus S)$ is a domatic $2$-partition. Given a domatic $2$-partition $(A,B)$, setting $f(v) = \{a\}$ for all $v\in A$ and
$f(v) = \{b\}$ for all $v\in B$ results in a rainbow double dominating function of $G$, which shows that
the rainbow double domination number is well defined for all graphs without isolated vertices.

The above observation can be strengthened as follows: The rainbow double domination number of a graph $G$ without isolated vertices equals the so-called {\it disjoint domination number} of $G$, defined in~\cite{MR2500476} as the minimum value of $|A|+|B|$ over all pairs $(A,B)$ of disjoint dominating sets of $G$, and denoted by $\gamma\gamma(G)$. The disjoint domination number was studied in several papers,~\cite{MR2766902,MR2980859,MR2558608,MR2546895,MR2646135}.

\begin{prp}\label{prp:disjoint-domination}
For every graph $G$ without isolated vertices, we have $\rgamaxtwo(G) = \gamma\gamma(G)$.
\end{prp}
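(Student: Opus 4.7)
The plan is to exhibit a weight-preserving bijection between rainbow double dominating functions of $G$ and ordered pairs of disjoint dominating sets of $G$. The key observation is that, by the definition, the codomain of a rainbow double dominating function is restricted to the three values $\{\emptyset,\{a\},\{b\}\}$, so each such function partitions $V(G)$ into three classes, two of which correspond naturally to the two dominating sets in a disjoint pair.

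First I would prove the inequality $\rgamaxtwo(G)\le\gamma\gamma(G)$. Let $(A,B)$ be a pair of disjoint dominating sets achieving $|A|+|B|=\gamma\gamma(G)$, and define $f:V(G)\to\{\emptyset,\{a\},\{b\}\}$ by $f(v)=\{a\}$ if $v\in A$, $f(v)=\{b\}$ if $v\in B$, and $f(v)=\emptyset$ otherwise. For every $v\in V(G)$, since $A$ dominates $v$ there exists $u\in N[v]\cap A$, giving $a\in f_\cup(N[v])$, and analogously $b\in f_\cup(N[v])$; hence $f_\cup(N[v])=\{a,b\}$ and $f$ is a rainbow double dominating function of weight $|A|+|B|$.

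For the reverse inequality $\gamma\gamma(G)\le\rgamaxtwo(G)$, I would start from a rainbow double dominating function $f$ of $G$ of minimum weight and set $A=\{v\in V(G):f(v)=\{a\}\}$ and $B=\{v\in V(G):f(v)=\{b\}\}$. These sets are disjoint because the codomain of $f$ forces $f(v)$ to take at most one of the values $\{a\}$ or $\{b\}$. The domination condition $f_\cup(N[v])=\{a,b\}$ implies both $N[v]\cap A\ne\emptyset$ and $N[v]\cap B\ne\emptyset$ for every $v\in V(G)$, so $A$ and $B$ are each dominating sets of $G$. Finally, $|A|+|B|=f(V(G))=\rgamaxtwo(G)$, and hence $\gamma\gamma(G)\le\rgamaxtwo(G)$.

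There is essentially no obstacle in this argument: the whole proof hinges on reading off the right bookkeeping from the restricted codomain $\{\emptyset,\{a\},\{b\}\}$, which makes the correspondence between rainbow labellings and disjoint pairs of dominating sets immediate. The only subtle point worth flagging is the need to appeal (as is already done in the paragraph preceding the proposition) to the existence of a domatic $2$-partition to guarantee that $\gamma\gamma(G)$ is finite whenever $G$ has no isolated vertices, so that both sides of the claimed identity are well defined.
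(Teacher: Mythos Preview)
Your proof is correct and follows essentially the same approach as the paper: construct $f$ from a disjoint pair $(A,B)$ by labelling $A$ with $\{a\}$ and $B$ with $\{b\}$, and conversely recover $A=f^{-1}(\{a\})$ and $B=f^{-1}(\{b\})$ from a rainbow double dominating function $f$. Your write-up is slightly more explicit in verifying the domination conditions, but the argument is the same.
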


\begin{proof}
Suppose that $A$ and $B$ are two disjoint dominating sets in $G$. Then setting
$f(v) = \{a\}$ for all $v\in A$, $f(v) = \{b\}$ for all $v\in B$, and $f(v) = \emptyset$ for all $v \in V\setminus(A\cup B)$ results
in a rainbow double dominating function of $G$ with total weight $|A|+|B|$.
Conversely, if $f:V\to\{\emptyset,\{a\},\{b\}\}$ is a rainbow double dominating function of $G$, then $f^{-1}(\{a\})$ and $f^{-1}(\{b\})$ form a pair of disjoint dominating sets of $G$ of total size equal to the total weight of $f$.
\end{proof}

\vbox{\noindent{\bf Total double domination, rainbow total double domination.}
%%%%%%%%%%%%%%%%%%%%%%%%%%%%%%%%%%%%%%%%%%%%%%%%%%
% total double domination
%%%%%%%%%%%%%%%%%%%%%%%%%%%%%%%%%%%%%%%%%%%%%%%%%%

\begin{definition} ($\boldsymbol{\gamatxtwo}$, row/column {\bf 8} in
Table~\ref{table-bounds})

Let $G=(V,E)$ be a graph. A {\em total double dominating function
of $G$} is a function $\boldsymbol{f:V\to\{0,1\}}$ such that for
all $v\in V(G)$ it holds that $$\boldsymbol{f(N(v)) \ge 2}\,.$$
The {\em total double domination number of $G$} is denoted by
$\boldsymbol{\gamatxtwo(G)}$ and equals the minimum weight $f(V)$
over all total double dominating functions $f$ of $G$.

Any set of the form $D = \{v\in V\mid f(v) = 1\}$
where $f$ is a total double dominating function of $G$
is said to be a {\em total double dominating set} of $G$.
Note that the minimum size of a total double dominating set equals $\gamatxtwo(G)$.
\end{definition}}

The total double domination number is finite precisely in graphs $G$ with $\delta(G)\ge 2$.
The invariant, which is in some papers called {\em double total domination},
was studied for instance in~\cite{hk-2010, hy-2010}.

%%%%%%%%%%%%%%%%%%%%%%%%%%%%%%%%%%%%%%%%%%%%%%%%%%
% rainbow total double domination
%%%%%%%%%%%%%%%%%%%%%%%%%%%%%%%%%%%%%%%%%%%%%%%%%%

\begin{definition} ($\boldsymbol{\rgamatxtwo}$, row/column {\bf 12} in
Table~\ref{table-bounds})

Let $G=(V,E)$ be a graph. A {\em rainbow total double dominating
function of $G$} is a function
$\boldsymbol{f:V\to\{\emptyset,\{a\},\{b\}\}}$ such that for all
$v\in V(G)$ it holds that
$$\boldsymbol{|f_\cup(N(v))| \ge 2}\,.$$
Equivalently, $f_\cup(N(v)) = \{a,b\}$ holds for all $v\in V(G)$.
The {\em rainbow total double domination number of $G$} is denoted
by $\boldsymbol{\rgamatxtwo(G)}$ and equals the minimum weight
$f(V)$ over all rainbow total double dominating functions $f$ of
$G$.
\end{definition}

\bigskip
The {\em total domatic number} of a graph $G$ without isolated vertices is the maximum number of total dominating sets of $G$
that form a partition of its vertex set, cf.~\cite{zel-1988}.
Analogously to the disjoint domination number of a graph, we define the {\it disjoint total domination number} of a graph $G$ as
the minimum value of $|A|+|B|$ over all pairs $(A,B)$ of disjoint total dominating sets of $G$, and
denote it by $\gamma_t\gamma_t(G)$. Note that this parameter is finite if and only if $G$ admits a partition of its vertex set into two total dominating sets, that is, if its total domatic number is at least $2$ (this is the case, for instance, for all $k$-regular graphs with $k\ge 4$~\cite{MR3055232}). A similar parameter for digraphs was recently considered in~\cite{Kulli-2014}.

\begin{prp}\label{prp:rgamatxtwo}
For every graph $G$, we have $\rgamatxtwo(G) = \gamma_t\gamma_t(G)$.
In particular, the rainbow total double domination number of $G$ is finite if and only if
$V(G)$ can be partitioned into two total dominating sets.
\end{prp}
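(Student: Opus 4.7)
The plan is to mirror the proof of Proposition~\ref{prp:disjoint-domination}, exploiting that the codomain $\{\emptyset,\{a\},\{b\}\}$ of a rainbow total double dominating function automatically forces disjointness of the ``$a$-set'' and ``$b$-set''.

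For the easier direction, I would start from a pair $(A,B)$ of disjoint total dominating sets of $G$ and define $f\colon V(G)\to\{\emptyset,\{a\},\{b\}\}$ by $f(v)=\{a\}$ for $v\in A$, $f(v)=\{b\}$ for $v\in B$, and $f(v)=\emptyset$ for $v\in V(G)\setminus(A\cup B)$. Since $A$ and $B$ are disjoint, $f$ is well defined. Because $A$ is a total dominating set, every vertex $v$ has a neighbor in $A$, contributing $a$ to $f_\cup(N(v))$; similarly, $B$ being a total dominating set contributes $b$. Hence $f_\cup(N(v))=\{a,b\}$ for every $v$, so $f$ is a rainbow total double dominating function of weight $|A|+|B|$. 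This yields $\rgamatxtwo(G)\le\gamma_t\gamma_t(G)$.

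For the reverse direction, I would take an arbitrary rainbow total double dominating function $f$ of $G$ and set $A=f^{-1}(\{a\})$ and $B=f^{-1}(\{b\})$. Disjointness of $A$ and $B$ is immediate from the fact that $\{a\}$ and $\{b\}$ are distinct values of $f$. For each $v\in V(G)$, the requirement $f_\cup(N(v))=\{a,b\}$ forces the existence of some $u\in N(v)$ with $a\in f(u)$, i.e.\ $u\in A$, and similarly a neighbor in $B$; thus both $A$ and $B$ are total dominating sets. The weight of $f$ is $|A|+|B|$, giving $\gamma_t\gamma_t(G)\le\rgamatxtwo(G)$. Taking minima on both sides of the two inequalities produces the claimed equality.

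The second assertion follows at once, since $\gamma_t\gamma_t(G)$ is finite exactly when $G$ admits two disjoint total dominating sets, i.e.\ when $V(G)$ can be partitioned into two total dominating sets (any vertex outside $A\cup B$ can be placed into either of the two sets without destroying the total dominating property). I do not expect any real obstacle: the correspondence between labelings by singletons from $\{a,b\}$ and pairs of disjoint sets is exact, and the total domination condition and the rainbow condition with open neighborhoods translate directly into one another.
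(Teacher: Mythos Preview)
Your proposal is correct and follows essentially the same approach as the paper: both directions are proved by the natural bijection between rainbow total double dominating functions and pairs of disjoint total dominating sets, and the partition statement follows since any vertex outside $A\cup B$ can be added to either set. The only difference is the order in which you present the two inequalities.
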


\begin{proof}
Let $f:V\to\{\emptyset,\{a\},\{b\}\}$ be a minimum rainbow total double dominating function of $G$.
Since $f_\cup(N(v)) = \{a,b\}$ for all $v\in V(G)$, the set of vertices $f^{-1}(\{a\})$,
that is, the set of vertices labeled by $\{a\}$, is a total dominating set in $G$, and, similarly, so is $f^{-1}(\{b\})$.
Since these two sets are disjoint, we have $\gamma_t\gamma_t(G)\le \rgamatxtwo(G)$.
Note that in this case, $V(G)$ can be partitioned into two total dominating sets, namely $f^{-1}(\{\emptyset,\{a\}\})$
and $f^{-1}(\{b\})$.

Conversely, suppose that $\gamma_t\gamma_t(G)$ is finite, and take a pair $A$, $B$ of disjoint total dominating sets $A$ and $B$
such that $|A|+|B| = \gamma_t\gamma_t(G)$.
Then, the function $f:V\to\{\{a\},\{b\}\}$, defined by
$f(v) = \{a\}$ for all $v\in A$, $f(v) = \{b\}$ for all $v\in B$, and $f(v) = \emptyset$ for all $v\in V\setminus (A\cup B)$,
is a rainbow total double dominating function of $G$ with $f(V) = \gamma_t\gamma_t(G)$.
This implies that $\rgamatxtwo(G)\le \gamma_t\gamma_t(G)$, and consequently $\rgamatxtwo(G)= \gamma_t\gamma_t(G)$.
The above argument also shows that if $V(G)$ can be partitioned into two total dominating sets, then $G$
has a rainbow total double dominating function.
\end{proof}

\noindent{\bf Roman domination.}

%%%%%%%%%%%%%%%%%%%%%%%%%%%%%%%%%%%%%%%%%%%%%%%%%%
% Roman domination
%%%%%%%%%%%%%%%%%%%%%%%%%%%%%%%%%%%%%%%%%%%%%%%%%%

\begin{sloppypar}
\begin{definition} ($\boldsymbol{\gamaR}$, row/column {\bf 13} in
Table~\ref{table-bounds})

Let $G=(V,E)$ be a graph. A {\em Roman dominating function of $G$} is a function $\boldsymbol{f:V\to\{0,1,2\}}$
such that for all $v\in V(G)$ it holds that
$$\boldsymbol{f(v) = 0~\Longrightarrow~(\exists w \in N(v) \textrm{ such that } f(w) = 2)}\,.$$
The {\em Roman domination number of $G$} is denoted by
$\boldsymbol{\gamaR(G)}$ and equals the minimum weight $f(V)$ over
all Roman dominating functions $f$ of $G$.
\end{definition}
\end{sloppypar}

As already mentioned in the introduction, the concept of Roman domination was introduced
by Stewart in~\cite{st-1999}, see also~\cite{cdhh-2004}. It was studied also in the PhD thesis of Dreyer~\cite{MR2701485} and
in a series of papers, see, e.g.,~\cite{lc-2012,MR2725878,MR2376019,MR3097071,MR2967224} for some recent references.

%%%%%%%%%%%%%%%%%%%%%%%%%%%%%%%%%%%%%%%%%%%%%%%%%%
% rainbow Roman domination
%%%%%%%%%%%%%%%%%%%%%%%%%%%%%%%%%%%%%%%%%%%%%%%%%%

Defining the rainbow Roman domination number in the obvious way does not lead to a new graph parameter:
it coincides with the Roman domination number.

For each of above defined domination parameters, given a weight
function $f$, if the defining condition is satisfied for a vertex
$v\in V(G)$, we say that $v$ is dominated (with respect to $f$).

\medskip
For later use in Section~\ref{sec:proofs-bounds}, we now recall also the definitions
of three covering parameters in graphs.\\

%%%%%%%%%%%%%%%%%%%%%%%%%%%%%%%%%%%%%%%%%%%%%%%%%%
% 2-edge covers and 2-vertex covers
%%%%%%%%%%%%%%%%%%%%%%%%%%%%%%%%%%%%%%%%%%%%%%%%%%

\noindent{\bf Edge covers, $2$-edge covers, and $2$-vertex covers.} \\

An \emph{edge cover of $G$} is a function $f:E\to\{0,1\}$ such
that for all $v\in V$, it holds that
$$\sum_{w \in V, vw \in E} f(vw) \ge 1\,.$$

If $G$ is a graph with no isolated vertices, the {\em edge cover
number of $G$} is denoted by $\rho(G)$ and equals the minimum
weight $f(E)$ over all edge covers $f$ of $G$.

A \emph{2-edge cover of $G$} is a function $f:E\to\{0,1,2\}$ such
that for all $v\in V$, it holds that
$$\sum_{w \in V, vw \in E} f(vw) \ge 2\,.$$

If $G$ is a graph with no isolated vertices, the {\em $2$-edge cover
number of $G$} is denoted by $\rho_2(G)$ and equals the minimum
weight $f(E)$ over all $2$-edge covers $f$ of $G$.

A \emph{2-vertex cover of $G$} is a function $f:V\to\{0,1,2\}$
such that for all $vw\in E$, it holds that
$$f(v)+f(w) \ge 2\,.$$

The {\em $2$-vertex cover number of $G$} is denoted by $\tau_2(G)$
and equals the minimum weight $f(V)$ over all $2$-vertex covers $f$
of $G$.

The following result is a consequence of several works by Gallai
\cite{Gallai1957,Gallai1958,Gallai1958b,Gallai1959} (cf.~\cite[Chapter 30]{Schrijver03}),
and will be used to prove the
bound of $\gamatsettwo$ in terms of $\gamawtwo$ in
Proposition~\ref{prp:gamma-tset2-gamma-w2}.

\begin{thm}\label{lem:rho+tau}
For every graph $G=(V,E)$ with no isolated vertices, $\rho_2(G) +
\tau_2(G) = 2|V|$.
\end{thm}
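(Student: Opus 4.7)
My plan is to derive the identity from linear-programming duality together with the half-integrality of the fractional vertex-cover and fractional edge-cover polytopes. The key observation is that the substitution $f=2h$ puts the $2$-vertex covers of $G$ in bijection with the half-integral functions $h\colon V\to\{0,\tfrac12,1\}$ satisfying $h(v)+h(w)\ge 1$ for every edge $vw$; analogously, $g=2g'$ puts the $2$-edge covers in bijection with $g'\colon E\to\{0,\tfrac12,1\}$ satisfying $\sum_{e\ni v}g'(e)\ge 1$ for every vertex $v$. After this rescaling, $\tau_2(G)$ (resp.\ $\rho_2(G)$) is exactly twice the minimum weight of a half-integral feasible solution of the fractional vertex-cover (resp.\ edge-cover) linear program.

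I would then invoke the half-integrality of these two LPs: for vertex cover it is the classical Nemhauser--Trotter/Balinski theorem, while for edge cover the analogous fact can be extracted from Edmonds' $b$-matching theory as developed in Schrijver's Chapter~30. Writing $\tau^{\ast}(G)$ and $\rho^{\ast}(G)$ for the two LP optima, half-integrality yields $\tau_2(G)=2\tau^{\ast}(G)$ and $\rho_2(G)=2\rho^{\ast}(G)$.

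To close the identity I would apply LP duality. The dual of fractional vertex cover is fractional matching, so $\tau^{\ast}(G)=\nu^{\ast}(G)$, while the dual of fractional edge cover is fractional independent set, so $\rho^{\ast}(G)=\alpha^{\ast}(G)$; the assumption of no isolated vertices is needed here to guarantee that the latter dual is finite. The standard complementation $h\mapsto \mathbf{1}-h$ between fractional vertex covers and fractional independent sets gives $\tau^{\ast}(G)+\alpha^{\ast}(G)=|V|$, hence $\tau^{\ast}(G)+\rho^{\ast}(G)=|V|$. Multiplying by $2$ produces the desired equality $\tau_2(G)+\rho_2(G)=2|V|$.

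The main obstacle I expect is the edge-cover half-integrality, which is subtler than its vertex-cover counterpart: one needs an extreme-point analysis showing that extreme points of $\{g\ge 0\colon Ag\ge\mathbf{1}\}$ are supported on vertex-disjoint ``odd cycles with attached trees'', carrying the value $\tfrac12$ on the cycle edges and integer values elsewhere. This is precisely the $b$-matching material catalogued in the Gallai papers and in Schrijver's Chapter~30 referenced by the authors, so no new technical tool is needed beyond a careful invocation of the existing theory.
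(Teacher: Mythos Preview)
The paper does not give its own proof of this theorem: it is stated as ``a consequence of several works by Gallai'' with references to \cite{Gallai1957,Gallai1958,Gallai1958b,Gallai1959} and to Schrijver's Chapter~30, and is then used as a black box inside Proposition~\ref{prp:gamma-tset2-gamma-w2}. So there is no in-paper argument to compare against; your proposal stands on its own.

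Your LP route is correct. The identifications $\tau_2=2\tau^{\ast}$, $\tau^{\ast}=\nu^{\ast}$, $\rho^{\ast}=\alpha^{\ast}$, and $\tau^{\ast}+\alpha^{\ast}=|V|$ are all standard and combine exactly as you describe; the no-isolated-vertex hypothesis is used precisely where you place it. The only delicate step is $\rho_2=2\rho^{\ast}$, i.e.\ half-integrality of the fractional edge-cover LP. This is true, and your sketch of the extreme-point structure (vertex-disjoint odd cycles carrying $\tfrac12$, with integral values on the attached trees) is the right picture; it is indeed the $b$-edge-cover/$b$-matching material in Schrijver's Chapter~30. If you want to bypass that step, a slightly lighter variant is available: use half-integrality of the \emph{matching} LP (which is more widely quoted) to get $\nu_2=2\nu^{\ast}=2\tau^{\ast}=\tau_2$, and then invoke the $2$-analogue of Gallai's edge-cover/matching identity, $\rho_2+\nu_2=2|V|$, which is exactly the Gallai result the paper is citing. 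Either packaging yields the theorem.
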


\begin{table}[h!]
\centering { \small
\renewcommand{\arraystretch}{1.4}
\tabcolsep=0.125cm
\begin{tabular}{|l|c|l|rl|}
  \hline
  Name & Notion & Function & Condition &
\\
\hline\hline

 domination & $\gama$ & $f:V\to\{0,1\}$ & $f(N[v])\geq 1$ & $\forall
 v$ \\ \hline

 total domination & $\gamat$ & $f:V\to\{0,1\}$ & $f(N(v))\geq 1$ & $\forall
 v$ \\ \hline

 weak 2-domination & $\gamawtwo$ & $f:V\to\{0,1,2\}$ & $f(N(v))\geq 2$ & if $f(v) = 0$ \\ \hline

 rainbow weak 2-domination & $\rgamawtwo$ & $f:V\to 2^{\{a,b\}}$ & $|f_\cup(N(v))|\geq 2$ & if
 $f(v) = \emptyset$ \\ \hline

 \{2\}-domination & $\gamasettwo$ & $f:V\to\{0,1,2\}$ & $f(N[v])\geq 2$ & $\forall
 v$ \\ \hline

 rainbow \{2\}-domination & $\rgamasettwo$ & $f:V\to 2^{\{a,b\}}$ & $|f_\cup(N[v])|\geq 2$ & $\forall
 v$ \\ \hline

 total \{2\}-domination & $\gamatsettwo$ & $f:V\to\{0,1,2\}$ & $f(N(v))\geq 2$ & $\forall
 v$ \\ \hline

 rainbow total \{2\}-domination & $\rgamatsettwo$ & $f:V\to 2^{\{a,b\}}$ & $|f_\cup(N(v))|\geq 2$ & $\forall
 v$ \\ \hline

 2-domination & $\gamatwo$ & $f:V\to\{0,1\}$ & $f(N(v))\geq 2$ & if $f(v) = 0$  \\ \hline

 rainbow 2-domination & $\rgamatwo$ & $f:V\to\{\emptyset,\{a\},\{b\}\}$ & $|f_\cup(N(v))|\geq 2$ & if
 $f(v) = \emptyset$ \\ \hline

 double domination & $\gamaxtwo$ & $f:V\to\{0,1\}$ & $f(N[v])\geq 2$ & $\forall
 v$ \\ \hline

 rainbow double domination & $\rgamaxtwo$ & $f:V\to\{\emptyset,\{a\},\{b\}\}$ & $|f_\cup(N[v])|\geq 2$ & $\forall
 v$ \\ \hline

 total double domination & $\gamatxtwo$ & $f:V\to\{0,1\}$ & $f(N(v))\geq 2$ & $\forall
 v$ \\ \hline

 rainbow total double domination & $\rgamatxtwo$ & $f:V\to\{\emptyset,\{a\},\{b\}\}$ & $|f_\cup(N(v))|\geq 2$ & $\forall
 v$ \\ \hline

 Roman domination & $\gamaR$ & $f:V\to\{0,1,2\}$ & $\exists w\sim v: f(w)= 2$ & if $f(v) = 0$  \\ \hline

 edge cover & $\displaystyle\rho$ & $f:E\to\{0,1\}$ & $f(E(v))\geq 1$ & $\forall
 v$ \\ \hline

 2-edge cover & $\displaystyle\rho_2$ & $f:E\to\{0,1,2\}$ & $f(E(v))\geq 2$ & $\forall
 v$ \\ \hline

 vertex cover & $\displaystyle\tau$ & $f:V\to\{0,1\}$ & $f(v)+f(w)\geq 1$ & $\forall
 vw \in E$ \\ \hline

 2-vertex cover & $\displaystyle\tau_2$ & $f:V\to\{0,1,2\}$ & $f(v)+f(w)\geq 2$ & $\forall
 vw \in E$ \\ \hline
\end{tabular}
} \caption{Summary of definitions of the parameters under study.}
\label{table-defs}
\end{table}

\section{Comparison of parameters}\label{sec:comparison}

In this section, we state our main result: the comparison of the values of $13$
domination parameters in graphs for each pair of them (Table~\ref{table-bounds}).
We start with describing, in Fig.~\ref{fig:Hasse}, the Hasse diagram of the relation $\le$ on the $15$ graph parameters defined in Section~\ref{sec:definitions}. Given two of these parameters, say $\rho$ and $\rho'$, we write $\rho\le \rho'$ if and
only if for every graph $G$ for which both $\rho(G)$ and $\rho'(G)$ are well defined, it holds that $\rho(G)\le \rho'(G)$.
The relations represented in this figure will be used often in our
proofs of upper bounds for the parameters in terms of functions of
other parameters, and in the proofs that these bounds are sharp.

\begin{figure}[h!]
\begin{center}
  \includegraphics[width=0.5\textwidth]{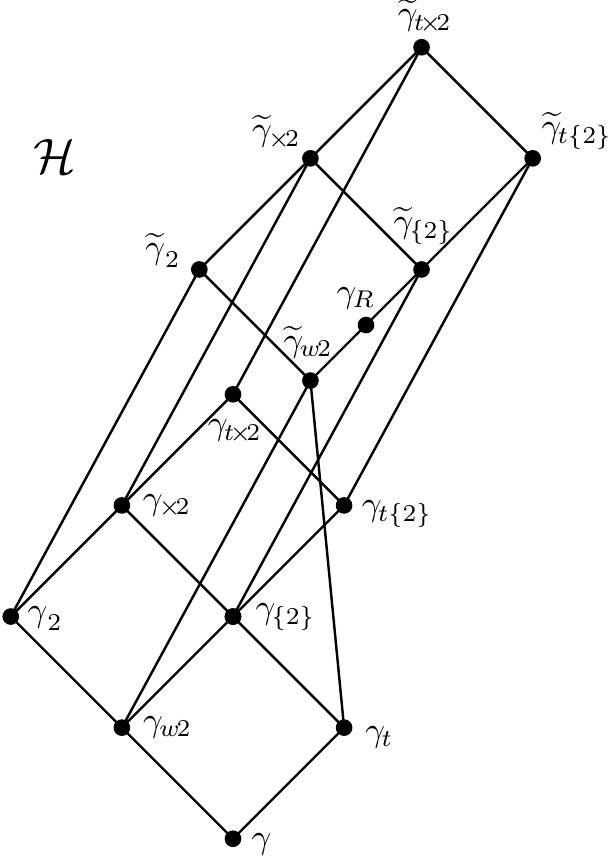}
\caption{Hasse diagram of the relation $\le$ among various
domination parameters.} \label{fig:Hasse}
\end{center}
\end{figure}

\begin{prp}\label{prp:Hasse-relations}
For any two parameters $\rho$ and $\rho'$ in the Hasse diagram on
Fig.~\ref{fig:Hasse},
$\rho$ is below $\rho'$ in the diagram if
and only if $\rho\le \rho'$.
\end{prp}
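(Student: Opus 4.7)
The plan is to prove both directions of the equivalence. The forward direction---that $\rho$ being below $\rho'$ in Fig.~\ref{fig:Hasse} implies $\rho(G)\le\rho'(G)$ on every graph $G$ for which both sides are defined---reduces by transitivity of $\le$ to checking each covering edge of the Hasse diagram. The backward direction---that no further universal inequality holds---reduces, again by transitivity, to exhibiting a witness graph for each minimally incomparable pair $(\rho,\rho')$ in the diagram.

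For the forward direction, I would walk through the covering edges and handle them in groups according to the pattern of argument. Enlarging the codomain shows that every $\{0,1\}$-valued feasible function is also a $\{0,1,2\}$-valued feasible function of the corresponding relaxed variant, giving inequalities such as $\gamawtwo\le\gamatwo$, $\gamasettwo\le\gamaxtwo$, and $\gamatsettwo\le\gamatxtwo$. Strengthening the neighborhood condition---using that $f(N(v))\ge k$ forces $f(N[v])\ge k$---yields $\gama\le\gamat$, $\gamasettwo\le\gamatsettwo$, $\gamaxtwo\le\gamatxtwo$, and the analogous rainbow inequalities. Passing from rainbow to non-rainbow via $g(v):=|f(v)|$ gives $\gamawtwo\le\rgamawtwo$, $\gamatwo\le\rgamatwo$, $\gamaxtwo\le\rgamaxtwo$, and $\gamatxtwo\le\rgamatxtwo$, since $|f_{\cup}(N(v))|\le f(N(v))$ for any rainbow function $f$, and $g$ has the same weight as $f$. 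The few edges involving $\gamaR$ are handled directly: the support of any Roman function is a dominating set (giving $\gama\le\gamaR$), and from a minimum dominating (respectively $2$-dominating) set one can read off a Roman function of weight at most $2\gama$ (respectively $\le 2\gamatwo$), but only the inequalities that actually appear as covering relations in the diagram need to be verified here.

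For the backward direction, I would produce, for each minimally incomparable pair $(\rho,\rho')$ in Fig.~\ref{fig:Hasse}, a single graph $G$ on which $\rho(G)>\rho'(G)$. Such witnesses can be drawn from the parameter values on standard families---paths $P_n$, cycles $C_n$, complete bipartite graphs $K_{m,n}$, stars $K_{1,n}$, and a handful of small ad-hoc graphs---computed explicitly in Section~\ref{sec:sharp} in order to establish sharpness of the bounds in Table~\ref{table-bounds}. The same computations collectively falsify every $\le$-relation that does not already appear as a directed path in the Hasse diagram, so no additional work beyond referring to that section is required.

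The main obstacle is purely organizational: with fifteen parameters the Hasse diagram has many covering edges and many incomparable pairs, so ensuring that every covering relation is verified and every incomparability is matched with an explicit witness is a substantial bookkeeping task. Each individual verification is short---typically one function-level transformation per covering edge and one small graph per incomparability---but the argument as a whole has to be organized carefully, with maximal reuse of transitivity to keep the number of explicit case checks small.
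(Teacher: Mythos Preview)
Your proposal is correct and follows essentially the same approach as the paper: verify the covering relations of the Hasse diagram for the forward direction and defer the backward direction to the sharpness examples in Section~\ref{sec:sharp}. The paper organizes the forward direction slightly differently---it singles out $\gama\le\gamawtwo$ and $\gamat\le\gamasettwo$ with explicit support-set arguments, cites the literature for $\gamat\le\rgamawtwo$ and $\rgamawtwo\le\gamaR$, and dismisses the remaining covering edges as immediate from the definitions---whereas you group the edges by argument pattern (codomain enlargement, neighborhood strengthening, the $g(v):=|f(v)|$ translation); but these are cosmetic differences, and both treatments leave the same two $\gamaR$-adjacent edges as the only ones requiring a short ad-hoc check.
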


\begin{proof}
Here, we will only argue the `only if' direction of the proof, that is, if
$\rho$ is below $\rho'$ in the diagram then $\rho\le \rho'$.
The other direction will follow from results in Section~\ref{sec:sharp}.

Clearly, it suffices to verify the statement only for the `covering' pairs $(\rho, \rho')$ in the diagram, that is, pairs such that
$\rho$ is immediately below $\rho'$ in the diagram -- the inequalities for all the remaining pairs follow by transitivity.
The inequality $\gama\leq \gamawtwo$ can be proved by observing that if
$f:V(G)\to\{0,1,2\}$ is a minimum weight weak $2$-dominating function of $G$,
then the set $f^{-1}(\{1,2\})$ is a dominating set of $G$
of total size $|f^{-1}(1)|+|f^{-1}(2)|\le f(V)= \gamawtwo(G)$.
Similarly, if $f:V(G)\to\{0,1,2\}$ is a minimum weight $\{2\}$-dominating function of $G$,
then the set $f^{-1}(\{1,2\})$ is a total dominating set of $G$
of total size $|f^{-1}(1)|+|f^{-1}(2)|\le f(V)= \gamasettwo(G)$. This implies the inequality
$\gamat\le \gamasettwo$. The inequality $\gamma_t\leq{\tilde\gamma}_{w2}$ was
proved in \cite{MR3320720} and the inequality ${\tilde\gamma}_{w2}\leq\gamma_R$ in \cite{wx-2010}.

If $f$ is a dominating function of $G$, then $2f$ is a Roman dominating function of $G$ (cf.~\cite{cdhh-2004,MR1991720}). Hence,
    for every graph $G$, it holds $\gamaR(G) \le 2\gama(G) = \rgamasettwo(G)$, which establishes the relation
$\gamaR\le \rgamasettwo$ in the diagram.

That every rainbow parameter is above its original counterpart
is a direct consequence of definitions. Similarly, all other inequalities represented in the diagram $\mathcal H$
can be easily derived just by looking at the definitions of parameters.
\end{proof}

%%%%%%%%%%%% TABLEs 1 and 2

In Table \ref{table-bounds} we summarize the bounds relating
any two of the $13$ considered parameters, or the fact that there is no bound. We will
give the necessary proofs of upper bounds in Section
\ref{sec:proofs-bounds} and summarize them in Table \ref{table-proofs}.
All the bounds are sharp, as will be shown in Section
\ref{sec:sharp}, which will also contain the proofs of
nonexistence of bounds between certain pairs of parameters.
The families proving the nonexistence of a bound and the examples showing
sharpness are summarized in Tables~\ref{table-families} and~\ref{table-sharp}
(on p.~\pageref{table-families} and~\pageref{table-sharp}), respectively.

%\begin{table}[h!]
\begin{sidewaystable}
\centering { \small
\renewcommand{\arraystretch}{1.4}
\tabcolsep=0.125cm
\begin{tabular}{|c|c||c|c|c|c|c|c|c|c|c|c|c|c|c|c|}
  \hline
\multicolumn{2}{|c||}{\multirow{2}{*}{$\rho\le f(\rho')$}} & 1 & 2 & 3 & 4 & 5 & 6 & 7 & 8 & 9 & 10 & 11 & 12 & 13\\
\hhline{~~-------------}
\multicolumn{2}{|c||}{} &  $\gamma$ & $\gamat$ & $\gamawtwo$ & $\gamasettwo$ & $\gamatsettwo$ & $\gamatwo$ & $\gamaxtwo$ & $\gamatxtwo$   & $\rgamawtwo$ & $\rgamatwo$ & $\rgamaxtwo$ & $\rgamatxtwo$ &  $\gamaR$   \\
\hline\hline
  % after \\: \hline or \cline{col1-col2} \cline{col3-col4} ...
  1 & $\gama$
& $=$ & $\gamat$ & $\gamawtwo$ & $\gamasettwo-1$ & $\gamatsettwo-1$ &
$\gamatwo$ &  $\gamaxtwo-1$ & $\gamatxtwo-1$
  & $\rgamawtwo$ & $\rgamatwo$ & $\frac{1}{2}\rgamaxtwo$ & $\frac{1}{2}\rgamatxtwo$ &
  $\gamaR-1$\,$^*$    \\
\hline

  % after \\: \hline or \cline{col1-col2} \cline{col3-col4} ...
  2 & $\gamat$

& $2\gama$ & $=$ & $\frac{3\gamawtwo-1}{2}$ & $\gamasettwo$ &
$\gamatsettwo-1$ & $\frac{3\gamatwo-1}{2}$ & $\gamaxtwo$ &
$\gamatxtwo-1$
  & $\rgamawtwo$ & $\rgamatwo$ & $\rgamaxtwo$ & $\frac{1}{2}\rgamatxtwo$ &
  $\gamaR$   \\
  \hline

  % after \\: \hline or \cline{col1-col2} \cline{col3-col4} ...
  3 & $\gamawtwo$

&  $2\gama$ & $2\gamat$ & $=$ & $\gamasettwo$ & $\gamatsettwo$ &
$\gamatwo$ & $\gamaxtwo$ & $\gamatxtwo$
  & $\rgamawtwo$ & $\rgamatwo$ & $\rgamaxtwo$ & $\rgamatxtwo$ & $\gamaR$   \\

\hline

  % after \\: \hline or \cline{col1-col2} \cline{col3-col4} ...
  4 & $\gamasettwo$
 & $2\gama$ & $2\gamat$ & $2\gamawtwo-1$\,$^*$    & $=$ & $\gamatsettwo$ & $2\gamatwo-1$\,$^*$     & $\gamaxtwo$ & $\gamatxtwo$ &
  $2\rgamawtwo-1$\,$^*$  & $2\rgamatwo-1$\,$^*$    & $\rgamaxtwo$ & $\rgamatxtwo$ & $2\gamaR-2$\,$^*$    \\
\hline

  % after \\: \hline or \cline{col1-col2} \cline{col3-col4} ...
  5 & $\gamatsettwo$
 & $4\gama$ & $2\gamat$ & $2\gamawtwo$  & $2\gamasettwo$ &  $=$  & $2\gamatwo$ & $2\gamaxtwo$ & $\gamatxtwo$
  & $2\rgamawtwo$ & $2\rgamatwo$ & 2$\rgamaxtwo$ & $\rgamatxtwo$ & $2\gamaR$\\
\hline

  % after \\: \hline or \cline{col1-col2} \cline{col3-col4} ...
  6 & $\gamatwo$

 &  \nof& \nof& \nof & \nof & \nof & $=$ & $\gamaxtwo$ & $\gamatxtwo$
  &\nof &  $\rgamatwo$  & $\rgamaxtwo$ & $\rgamatxtwo$ & \nof\\
\hline

  % after \\: \hline or \cline{col1-col2} \cline{col3-col4} ...
  7 & $\gamaxtwo$

 & \nof&\nof & \nof & \nof & \nof & $2\gamatwo-1$ & $=$  & $\gamatxtwo$
  &\nof & $2\rgamatwo-1$  & $\rgamaxtwo$ & $\rgamatxtwo$ & \nof\\
\hline

  % after \\: \hline or \cline{col1-col2} \cline{col3-col4} ...
  8 & $\gamatxtwo$
 & \nof&\nof & \nof & \nof & \nof & $3\gamatwo-2$ & $2\gamaxtwo-1$  &  $=$
  &\nof  &  $3\rgamatwo-2$  & $2\rgamaxtwo-1$ & $\rgamatxtwo$ & \nof\\

 \hline
9 & $\rgamawtwo$ & $2\gama$ & $2\gamat$  &  $2\gamawtwo-2$ & $2\gamasettwo-2$ &$2\gamatsettwo-2$ & $2\gamatwo-2$ & $2\gamaxtwo-2$ & $2\gamatxtwo-2$ & $=$ &  $\rgamatwo$  & $\rgamaxtwo$ & $\rgamatxtwo$ & $\gamaR$\\
\hline
  10 & $\rgamatwo$
 &  \nof&\nof & \nof & \nof & \nof & \nof & \nof & \nof
  &\nof &  $=$  & $\rgamaxtwo$ & $\rgamatxtwo$ & \nof\\
\hline
  11 & $\rgamaxtwo$
 &  \nof&\nof & \nof & \nof & \nof & \nof &\nof &\nof
  &\nof & $2\rgamatwo$ & $=$   & $\rgamatxtwo$ & \nof\\
\hline
  12 & $\rgamatxtwo$
 &  \nof&\nof & \nof & \nof & \nof &\nof  & \nof&\nof
  &\nof &  \nof&\nof & $=$    & \nof\\
\hline
  % after \\: \hline or \cline{col1-col2} \cline{col3-col4} ...
  13 & $\gamaR$ & $2\gama$ & $2\gamat$ & $2\gamawtwo-1$ & $2\gamasettwo-2$ & $2\gamatsettwo-2$ & $2\gamatwo-1$
   & $2\gamaxtwo-2$& $2\gamatxtwo-2$& $\frac{3}{2}\rgamawtwo$ & $\frac{3}{2}\rgamatwo$ & $\rgamaxtwo$ & $\rgamatxtwo$& $=$ \\
  \hline
\end{tabular}
} \caption{Upper bounds for several domination parameters in terms
of others. The entry in row indexed by parameter $\rho$ and in
column indexed by parameter $\rho'$ represents either that $\rho$
is not bounded from above by any function of $\rho'$ (in this case
the entry is~``\nof''), or gives an upper bound for $\rho$ in
terms of a function of $\rho'$. The bound $\rho\le f(\rho')$
should be interpreted so that it holds for graphs $G$ such that
both $\rho(G)$ and $\rho'(G)$ are defined. For instance, the entry
in row $\gamat$ and column $\gama$ is $2\gama$, indicating that
for every graph $G$ without isolated vertices, it holds that
$\gamat(G)\le 2\gama(G)$. Bounds of the form $\rho\le f(\rho')$
that are marked with an asterisk are only valid for graphs with at
least one edge. The (sharp) bounds for graphs with no edges are,
respectively, $\gama = \gamaR$, $\gamasettwo = 2\gamawtwo =
2\gamatwo = 2\rgamawtwo = 2\rgamatwo = 2\gamaR$, and can be
easily verified. The bound in entry $(9,3)$ holds if $G\neq K_1$.
The correctness of the entries in the table is proved or referenced in Section~\ref{sec:proofs-bounds}
(proofs of upper bounds) and in Section~\ref{sec:sharp} (proofs of nonexistence of upper bounds),
while the references to proofs are summarized in Table~\ref{table-proofs}.
All the bounds are sharp. The examples for sharpness and for the unboundedness
are summarized in Table~\ref{table-sharp}.} \label{table-bounds}
%\end{table}
\end{sidewaystable}

\begin{sidewaystable}
\centering { \small
\renewcommand{\arraystretch}{1.4}
\tabcolsep=0.125cm
\begin{tabular}{|c|c||c|c|c|c|c|c|c|c|c|c|c|c|c|c|}
  \hline
\multicolumn{2}{|c||}{\multirow{2}{*}{$\rho\le f(\rho')$}} & 1 & 2 & 3 & 4 & 5 & 6 & 7 & 8 & 9 & 10 & 11 & 12 & 13\\
\hhline{~~-------------}
\multicolumn{2}{|c||}{} &  $\gamma$ & $\gamat$ & $\gamawtwo$ & $\gamasettwo$ & $\gamatsettwo$ & $\gamatwo$ & $\gamaxtwo$ & $\gamatxtwo$   & $\rgamawtwo$ & $\rgamatwo$ & $\rgamaxtwo$ & $\rgamatxtwo$ &  $\gamaR$   \\
\hline\hline
  % after \\: \hline or \cline{col1-col2} \cline{col3-col4} ...
  1 & $\gama$
& $=$ & $\Hasse$ & $\Hasse$ & \ref{prp:gamma-gamma-set2}  & T4 &
$\Hasse$ & T4 & T2
  & $\Hasse$ & $\Hasse$ & \ref{prp:gamma-rgamma-x2} & T11 &
  \ref{prp:gamma-gammaR}   \\
\hline

  % after \\: \hline or \cline{col1-col2} \cline{col3-col4} ...
  2 & $\gamat$

& $\Hasse$~\cite{hs-1998}
  & $=$ & \ref{prp:gamma-t-gamma-w2} & $\Hasse$ &
    \ref{prp:gamma-t-gamma-tset2} & T3 & $\Hasse$ &
T5
  & $\Hasse$~\cite{MR3320720}
  & $\Hasse$  & $\Hasse$ & \ref{prp:gamma-t-rgamma-tx2} &
  $\Hasse$~\cite{MR3220299}
  \\
  \hline

  % after \\: \hline or \cline{col1-col2} \cline{col3-col4} ...
  3 & $\gamawtwo$

&  T4 & T1 & $=$ & $\Hasse$ & $\Hasse$ & $\Hasse$ & $\Hasse$ & $\Hasse$
  & $\Hasse$ & $\Hasse$ & $\Hasse$ & $\Hasse$ & $\Hasse$   \\

\hline

  % after \\: \hline or \cline{col1-col2} \cline{col3-col4} ...
  4 & $\gamasettwo$
 & \ref{prp:gamma-set2-gamma} & T1 & \ref{prp:gamma-set2-gamma-w2}    & $=$ & $\Hasse$ & T3 &
 $\Hasse$ & $\Hasse$ &
  T3  & T3   & $\Hasse$ & $\Hasse$ & T1    \\
\hline

  % after \\: \hline or \cline{col1-col2} \cline{col3-col4} ...
  5 & $\gamatsettwo$
 & T2 & \ref{prp:gamma-tset2-gamma-t} & \ref{prp:gamma-tset2-gamma-w2}  & T2 &  $=$  & T3 & T3 & $\Hasse$
  & T3 & T3 & T3 & $\Hasse$ & T3\\
\hline

  % after \\: \hline or \cline{col1-col2} \cline{col3-col4} ...
  6 & $\gamatwo$

 &  T5 & T5 & T5 & T5 & \ref{prp:unb-gamatwo-gamatsettwo} & $=$ & $\Hasse$ & $\Hasse$
  & T5 &  $\Hasse$  & $\Hasse$ & $\Hasse$ & T5 \\
\hline

  % after \\: \hline or \cline{col1-col2} \cline{col3-col4} ...
  7 & $\gamaxtwo$

 & T5 & T5 & T5 & T5 & T6 & \ref{prp:gamma-x2-gamma-2} & $=$  & $\Hasse$
  & T5 & T6  & $\Hasse$ & $\Hasse$ & T5 \\
\hline

  % after \\: \hline or \cline{col1-col2} \cline{col3-col4} ...
  8 & $\gamatxtwo$
 & T5 & T5 & T5 & T5 & \ref{prp:unb-gamatxtwo-gamatsettwo} & \ref{prp:gamma-tx2-gamma-2} & \ref{prp:gamatxtwo-gamaxtwo}  &  $=$
  & T5  &  T6  & T7 & $\Hasse$ & T5 \\

 \hline
9 & $\rgamawtwo$ & T13 & T1  &  \ref{prp:rgamma-w2-gamma-w2} & T3
& T3 & T3 & T3 & T3 & $=$ &  $\Hasse$  & $\Hasse$ & $\Hasse$ &
  $\Hasse$~\cite{wx-2010}
  \\
\hline

  10 & $\rgamatwo$
 &  T5 & T5 & T5 & T5 & T6& T8 & T8 & \ref{prp:unb-rgamatwo-gamatxtwo}
  & T5 &  $=$  & $\Hasse$ & $\Hasse$ & T5 \\
\hline
  11 & $\rgamaxtwo$
 &  T5 & T5& T5 & T5 & T6 & T8 & T8 & T10
  & T5 & \ref{prp:rgamma-x2-rgamma-2} & $=$   & $\Hasse$ & T5 \\
\hline
  12 & $\rgamatxtwo$
 &  T11 & T11 &  T11 & T11  &  T11 & T11 &  T11 & T11
  & T11  &  T11 & \ref{prp:unb-rgamatxtwo-rgamaxtwo} & $=$    & T11 \\
\hline
  % after \\: \hline or \cline{col1-col2} \cline{col3-col4} ...
  13 & $\gamaR$
  & \cite{cdhh-2004,MR1991720}
  & T1 & \ref{prp:gamma-R-gamma-w-2} & T1 & T4 & T3
   & T4 & T5 & \cite{MR3097710,FF-2012} & T9 & $\Hasse$ & $\Hasse$ & $=$ \\
  \hline
\end{tabular}
} \caption{Summary of proofs for the entries of Table
\ref{table-bounds}. Entries labeled $\Hasse$ refer to
results that are already stated in the corresponding Hasse
diagram of Fig.~\ref{fig:Hasse}. Entries labeled
``T$k$'' are proved by transitivity through parameter in
row/column $k$. Details regarding the use of
transitivity in proving nonexistence of bounds will be explained
in Section~\ref{ss:unbound}.
Entries labeled with numbers refer to
propositions that directly prove the bound
(or nonexistence of a bound).
Finally, for some entries we give references to papers
where a proof of the corresponding bound can be found.}
\label{table-proofs}
\end{sidewaystable}

We can also consider the following weaker version of the $\le$ relation
on the $15$ graph parameters defined in Section~\ref{sec:definitions}.
The relation $\preceq$ is defined by $\rho\preceq \rho'$ if and
only if there exists a function $f$ such that
for every graph $G$ for which both $\rho(G)$ and
$\rho'(G)$ are well defined, it holds that $\rho(G)\le f(\rho'(G))$.
This relation is reflexive and transitive, but not antisymmetric.
It induces an equivalence relation $\approx$, defined by $\rho\approx \rho'$
if and only if $\rho\preceq \rho'$ and $\rho'\preceq \rho$.
The results summarized in Table~\ref{table-bounds} imply that
the relation $\approx$ has exactly four equivalence classes, which
are linearly ordered by the quotient partial order obtained from $\preceq$ by
collapsing each equivalence class of $\approx$ into a single element.
See Fig.~\ref{fig:Hasse-quotient} for a depiction of these four equivalence classes
and the Hasse diagram of the corresponding linear order.

\begin{figure}[h!]
\begin{center}
  \includegraphics[width=0.3\textwidth]{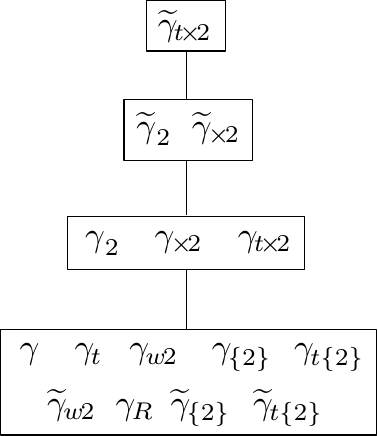}
\caption{The Hasse diagram representing the preorder $\preceq$ on the considered domination parameters.} \label{fig:Hasse-quotient}
\end{center}
\end{figure}

%\section{Proofs of bounds}
%(organized in subsections columnwise, i.e. bounds in terms of a given parameter)

\section{Proofs of upper bounds in Table~\ref{table-bounds}}\label{sec:proofs-bounds}

%%%%%%%%%%%%%%%%%%%%%%%%%%%%%%%%%%% BOUNDS HASSE %%%%%%%%%%%%%%%%%%%%%%%

In this section we prove the upper bounds from Table~\ref{table-bounds}.
All the bounds are sharp, which will be demonstrated in Section~\ref{sec:sharp}.
In the first subsection we concentrate on the bounds that follow from the
Hasse diagram in~Fig.~\ref{fig:Hasse}, while in the second subsection we
give explicit proofs of the remaining bounds. Note that the bounds in the entries
of Table~\ref{table-bounds} that are not proven directly in this section,
follow by transitivity from other bounds, as shown in Table~\ref{table-proofs}.

\subsection{Upper bounds following from the Hasse diagram on Fig.~\ref{fig:Hasse}}

The first proposition of this subsection is a direct consequence of Proposition~\ref{prp:Hasse-relations}.

\begin{sloppypar}
\begin{prp}\label{prp:upper_bounds_Hasse}
The upper bounds indicated by the following entries in Table~\ref{table-bounds}
%(following from Hasse diagram $\Hasse$ on Fig.~\ref{fig:Hasse})
are correct:
$(1,2)$, $(1,3)$,
$(1,6)$,
$(1,9)$,
$(1,10)$
$(2,4)$
$(2,7)$
$(2,9)$,
$(2,10)$
$(2,11)$
$(2,13)$,
$(3,4)$,
$(3,5)$, $(3,6)$, $(3,7)$, $(3,8)$, $(3,9)$, $(3,10)$, $(3,11)$,
$(3,12)$, $(3,13)$,
$(4,5)$, $(4,7)$, $(4,8)$, $(4,11)$, $(4,12)$,
$(5,8)$, $(5,12)$,
$(6,7)$, $(6,8)$, $(6,10)$, $(6,11)$, $(6,12)$,
$(7,8)$, $(7,11)$, $(7,12)$,
$(8,12)$,
$(9,10)$, $(9,11)$, $(9,12)$, $(9,13)$,
$(10,11)$, $(10,12)$,
$(11,12)$,
$(13,11)$,
$(13,12)$.\qed
\end{prp}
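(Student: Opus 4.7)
The plan is to observe that Proposition~\ref{prp:Hasse-relations} has already done the real work: it asserts that whenever $\rho$ is below $\rho'$ in the Hasse diagram of Fig.~\ref{fig:Hasse}, the pointwise inequality $\rho(G)\le \rho'(G)$ holds for every graph $G$ on which both sides are defined. The content of Proposition~\ref{prp:upper_bounds_Hasse} is then purely bookkeeping: one needs to check, for each listed entry $(i,j)$ of Table~\ref{table-bounds}, that the tabulated upper bound is of the form $\rho_i\le \rho_j$ (i.e.\ coefficient one and no additive constant), and that $\rho_i$ lies below $\rho_j$ in the Hasse diagram, possibly through a chain of intermediate parameters.

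Concretely, I would proceed in two steps. First, I would partition the listed entries into groups according to where the two parameters sit in the diagram. For instance, the entries $(1,2), (1,3), (1,6), (1,9), (1,10)$ all correspond to showing that $\gama$ lies below $\gamat$, $\gamawtwo$, $\gamatwo$, $\rgamawtwo$, $\rgamatwo$; each is an immediate or one-step consequence of $\Hasse$. The entries in row $3$, namely $(3,4),\ldots,(3,13)$, assert that $\gamawtwo$ is a lower bound for every other parameter except $\gama$, $\gamat$; from the diagram one reads off the chains $\gamawtwo\le \gamasettwo\le \gamatsettwo$, $\gamawtwo\le \gamatwo\le \gamaxtwo\le \gamatxtwo$, $\gamawtwo\le \rgamawtwo\le \rgamatwo\le \rgamaxtwo\le \rgamatxtwo$, and $\gamawtwo\le \gamaR$. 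The remaining listed entries are handled analogously, by identifying in each case an ascending path in the Hasse diagram from the row-parameter to the column-parameter.

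Second, I would invoke transitivity of $\le$ and the conclusion of Proposition~\ref{prp:Hasse-relations} to chain together the covering relations in those paths, obtaining the desired pointwise inequality in every case. Since all the bounds being verified here are of the plain form $\rho\le \rho'$, no arithmetic beyond transitive composition of inequalities is required, and nothing has to be proved that is not already contained in Fig.~\ref{fig:Hasse}.

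There is no real obstacle: the only thing that could go wrong is a mis-transcription between the table and the diagram, so the main care is to verify one-by-one that each of the roughly fifty listed entries does indeed correspond to an upward path in Fig.~\ref{fig:Hasse}. This is a routine inspection, and once completed the proof reduces to a single sentence citing Proposition~\ref{prp:Hasse-relations} together with transitivity.
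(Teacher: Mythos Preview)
Your proposal is correct and matches the paper's own approach exactly: the paper simply states that the proposition is a direct consequence of Proposition~\ref{prp:Hasse-relations} and marks it with a \qed. Your elaboration of the bookkeeping (grouping entries and tracing ascending paths in the Hasse diagram, then invoking transitivity) is precisely the verification that the paper leaves implicit.
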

\end{sloppypar}
%
%\begin{proof}
%{The upper bounds indicated by all the stated entries in Table~\ref{table-bounds}
%follow immediately from the Hasse diagram $\Hasse$ in Fig.~\ref{fig:Hasse}, except the bound
%indicated by entry $(1,13)$, namely $\gama(G)\le \gamaR(G)-1$.}
%To see that $\gama(G)\le \gamaR(G)-1$ for every graph $G$ with
%at least one edge, let $f$ be a minimum weight Roman dominating
%function of $G$. Let $V_i = \{v\in V(G)\mid f(v) = i\}\,,$ for
%$i\in \{0,1,2\}$. Suppose first that $V_2=\emptyset$. Then also
%$V_0=\emptyset$, and clearly $\gamaR(G)=f(V(G))=|V_1|=|V(G)|\ge
%\gama(G)+1$, where the last inequality follows from the fact that
%$G$ has at least one edge. On the other hand, if
%$V_2\ne\emptyset$, consider the set  $D=V_1\cup V_2$. Clearly, $D$
%is a dominating set of $G$. Now, $$\gama(G)\le |D|=|V_1|+|V_2|\le
%|V_1|+2|V_2|-1=f(V(G))-1=\gamaR(G)-1\,.%\hfill\qedhere
%$$
%\end{proof}

%%%%%%%%%%%%%%%%%%%%%%% %%%%%%%%%%%%%%%%

\subsection{Other upper bounds in Table~\ref{table-bounds}}

In this subsection the remaining upper bounds are proved.
To ease an examination the bounds are numbered by the ordered pairs $(r,c)$,
where $r$ stands for the row and $c$ for the column in the table.
Since a proof of the bound labeled by a pair $(r,c)$ proceeds by taking an optimal
solution for the parameter indexed by column $c$ and modifying it into a feasible solution
for the parameter indexed by row $r$, we group together the proofs of bounds sharing the same column coordinate.
We proceed in increasing order of columns and, within the same column,
in increasing order of rows.

The proofs of Propositions~\ref{prp:gamma-set2-gamma-w2} and~\ref{prp:gamma-tset2-gamma-w2} below
make use of two classical results due to Gallai: one on the structure of minimum edge covers in graphs,
and one on the relation between the $2$-edge-cover and the $2$-vertex cover numbers
of a graph (Theorem~\ref{lem:rho+tau}), respectively.

\subsubsection*{Bound (4,1)}

\begin{prp}\label{prp:gamma-set2-gamma}
For every graph $G$, $\gamasettwo(G)\le 2\gama(G)$.
\end{prp}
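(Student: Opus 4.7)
The plan is to construct a $\{2\}$-dominating function of weight exactly $2\gamma(G)$ from a minimum dominating set, which immediately gives the desired inequality.

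First I would take a minimum dominating set $D$ of $G$, so $|D|=\gamma(G)$. I would then define a function $f\colon V(G)\to\{0,1,2\}$ by setting $f(v)=2$ for all $v\in D$ and $f(v)=0$ for all $v\in V(G)\setminus D$. The total weight of $f$ is $f(V) = 2|D| = 2\gamma(G)$, so it suffices to verify that $f$ is a $\{2\}$-dominating function, i.e., $f(N[v])\ge 2$ for every vertex $v$.

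To verify the condition, I would split into two cases. If $v\in D$, then $v\in N[v]$ and $f(v)=2$, so $f(N[v])\ge 2$. If $v\notin D$, then since $D$ is a dominating set, there exists $w\in N(v)\cap D$; hence $w\in N[v]$ and $f(w)=2$, yielding $f(N[v])\ge 2$ again. Thus $f$ is a $\{2\}$-dominating function of weight $2\gamma(G)$, and by the definition of $\gamasettwo(G)$ as a minimum, we conclude $\gamasettwo(G)\le 2\gamma(G)$.

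There is no real obstacle here — the proof is just a one-line construction — so the main thing to keep in mind is simply that assigning weight $2$ (rather than $1$) to each vertex of the dominating set is what guarantees the $\{2\}$-domination condition at vertices already in $D$, which have no required neighbor in $D$.
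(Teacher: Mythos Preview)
Your proof is correct and is essentially identical to the paper's argument: the paper also takes a minimum dominating function $f$ and observes that $2f$ is a $\{2\}$-dominating function of weight $2\gama(G)$. Your version simply spells out the verification of the $\{2\}$-domination condition in slightly more detail.
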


\begin{proof}
The inequality $\gamasettwo(G)\le 2\gama(G)$  follows from the
fact that if $f:V(G)\to\{0,1\}$ is a minimum weight dominating
function of $G$, then $g = 2f:V(G)\to \{0,1,2\}$ is a
$\{2\}$-dominating function of $G$ of weight exactly $2\gama(G)$.
\end{proof}

\subsubsection*{Bound (5,2)}

\begin{prp}\label{prp:gamma-tset2-gamma-t}
For every graph $G$ without isolated vertices,
$\gamatsettwo(G)\le
2\gamat(G)$.
\end{prp}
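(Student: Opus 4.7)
The plan is to mirror the construction used in the proof of Proposition~\ref{prp:gamma-set2-gamma}: take an optimal total dominating function and rescale it by a factor of $2$ to obtain a feasible total $\{2\}$-dominating function of twice the weight. Since $G$ has no isolated vertices, $\gamat(G)$ is finite, so we may fix a minimum weight total dominating function $f:V(G)\to\{0,1\}$.

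Next, I would define $g:V(G)\to\{0,1,2\}$ by $g(v)=2f(v)$ for every $v\in V(G)$, so that $g$ takes values in $\{0,2\}$. The verification that $g$ is a total $\{2\}$-dominating function is then immediate: for every $v\in V(G)$, the fact that $f$ is a total dominating function gives $f(N(v))\ge 1$, and therefore
\[
g(N(v)) \;=\; \sum_{w\in N(v)} g(w) \;=\; 2\sum_{w\in N(v)} f(w) \;=\; 2f(N(v)) \;\ge\; 2,
\]
which is exactly the defining condition for a total $\{2\}$-dominating function.

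Finally, the weight comparison gives $g(V(G)) = 2f(V(G)) = 2\gamat(G)$, so by the definition of $\gamatsettwo(G)$ as a minimum we conclude $\gamatsettwo(G)\le 2\gamat(G)$. There is no real obstacle in this argument; the only thing to be careful about is invoking the hypothesis that $G$ has no isolated vertices precisely to ensure that both $\gamat(G)$ and $\gamatsettwo(G)$ are finite (as noted in Section~\ref{sec:definitions}), so that the inequality is a genuine statement about finite quantities.
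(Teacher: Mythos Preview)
Your proof is correct and follows essentially the same approach as the paper: take a minimum total dominating function $f$, set $g=2f$, and observe that $g$ is a total $\{2\}$-dominating function of weight $2\gamat(G)$. The paper's argument is the same one-line rescaling, just stated more tersely.
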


\begin{proof}
The inequality $\gamatsettwo(G)\le
2\gamat(G)$ follows from the
fact that if $f:V(G)\to\{0,1\}$ is a minimum weight total dominating
function of $G$, then $g = 2f:V(G)\to \{0,1,2\}$ is a
total $\{2\}$-dominating function of $G$ of weight exactly $2\gamat(G)$.
\end{proof}

\subsubsection*{Bound (2,3)}

\begin{prp}\label{prp:gamma-t-gamma-w2}
For every graph $G$ without isolated vertices, $\gamat(G)\le
\frac{3\gamawtwo(G)-1}{2}$.
\end{prp}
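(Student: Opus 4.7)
The plan is to extract a total dominating set from a minimum weight weak $2$-dominating function. Let $f$ be such a function and write $V_i = f^{-1}(i)$ and $w := \gamawtwo(G) = |V_1| + 2|V_2|$; set $S := V_1 \cup V_2$. The constraint $f(N(v)) \ge 2$ for $v \in V_0$ implies that $S$ is a dominating set of $G$ and that $w \ge 2$ for every graph without isolated vertices. If no vertex of $S$ is isolated in $G[S]$, then $S$ itself is a total dominating set and $\gamat(G) \le |S| \le w \le (3w-1)/2$.

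Otherwise, let $I$ denote the set of vertices of $S$ that are isolated in $G[S]$, with $I_j := I \cap V_j$. For each $v \in I$, pick a neighbor $u_v \in V_0$ of $v$ (such a neighbor exists since $G$ has no isolated vertices). The candidate total dominating set will have the form $D = (S \setminus R) \cup X$, where $X \subseteq V_0$ is chosen so that every $v \in I$ has a neighbor in $X$, and $R \subseteq I_1$ is a set of isolated $V_1$-vertices to be removed from $D$. The key structural ingredients for verifying that $D$ is a total dominating set and for bounding its size are: (a) for every $v \in I_1$ and every $u \in V_0 \cap N(v)$, the inequality $f(N(u)) \ge 2$ together with $f(v) = 1$ forces $|N(u) \cap S| \ge 2$, so each $V_0$-neighbor of an $I_1$-vertex has an additional neighbor in $S$; and (b) by the minimality of $f$, each $v \in V_2$ admits a ``private'' neighbor $u \in V_0$ with $N(u) \cap S = \{v\}$. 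Fact (a) makes it safe to remove vertices of $I_1$ (their $V_0$-neighbors retain at least one $S$-neighbor), while (b) gives control over the contribution of $V_2$.

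Since $|D| = |V_1| + |V_2| - |R| + |X|$, the bound $|D| \le (3w-1)/2$ reduces to showing that $X$ and $R$ can be chosen so that $|X| - |R| \le (|V_1| + 4|V_2| - 1)/2$. The $V_2$-side of this inequality is handled by $|I_2| \le |V_2|$ combined with (b), which pins down the private $V_0$-neighbors that must lie in $X$. The $V_1$-side becomes a matching-style combinatorial problem in the bipartite graph between $I_1$ and $V_0 \cap N(I_1)$: find a small transversal $X$ of $I_1$ together with a large removal set $R \subseteq I_1$ such that every $V_0$-vertex whose $S$-neighbors all lie in $R$ still has a neighbor in $X$. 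The main obstacle is handling extremal configurations such as $G = C_6$, where $I_1 = V_1$, every $V_0$-vertex has both of its $S$-neighbors in $I_1$, and the bound is tight; in such cases the balance between $|X|$ and $|R|$ is delicate and requires using the minimality of $f$ to pair isolated $V_1$-vertices through common $V_0$-neighbors while keeping enough $V_1$-vertices in $D$ to dominate all of $V_0$.
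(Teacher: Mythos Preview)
Your setup is sound—the sets $V_0,V_1,V_2$, the observation that $S=V_1\cup V_2$ dominates $G$, and the reduction to finding $X\subseteq V_0$ and $R\subseteq I_1$ with $|X|-|R|\le (|V_1|+4|V_2|-1)/2$ are all correct, and facts (a) and (b) are valid and useful. However, the proof is not complete: you explicitly identify ``the main obstacle'' (the matching-style problem of choosing $X$ and $R$) and describe what a solution would need to accomplish, but you never construct $X$ and $R$ or prove the inequality. Saying that the balance ``requires using the minimality of $f$ to pair isolated $V_1$-vertices through common $V_0$-neighbors'' is a description of the difficulty, not a resolution of it. As it stands, the argument stops precisely at the point where the real work begins.

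The paper's proof sidesteps your matching problem with a greedy packing: it chooses a maximal set $S\subseteq V_0$ whose $V_1$-neighbourhoods are nonempty and pairwise disjoint. Since each such vertex has at least two $V_1$-neighbours, $|S|\le |N(S)\cap V_1|/2\le |V_1|/2$, and by maximality every $V_0$-vertex with a $V_1$-neighbour is automatically dominated by $V_1\setminus D_0$ (where $D_0$ is the part of $V_1$ missed by $S$). One then discards $D_0$ and patches it and $V_2$ with one extra neighbour each; a short case analysis on whether $D_0$ or $V_2$ is nonempty yields the $-1$. This packing idea is exactly the missing ingredient in your outline: it replaces the unspecified ``matching-style'' construction with a one-line size bound $|S|\le |V_1|/2$.
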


\begin{proof}
Let $G$ be a graph without isolated vertices, and let
$f:V(G)\to\{0,1,2\}$ be a minimum weight weak $2$-dominating
function of $G$.
Let us define the following subsets of $V(G)$.
Let $V_i = \{v\in V(G)\mid f(v) = i\}\,,$ for $i\in \{0,1,2\}$.
%Note that by minimality of $f$, every vertex in $V_2$ has a neighbor in $V_0$
%(otherwise we could relabel it with $1$ and obtain a
%weak $2$-dominating function of $G$ of smaller weight).
Let $S$ be a maximal set of vertices in $V_0$ such
that their neighborhoods intersected with $V_1$ are nonempty and
pairwise disjoint. Let $D_0$ denote the set of vertices in $V_1$
that have no neighbor in $S$. For every vertex $v\in D_0\cup V_2$,
choose a vertex $v'$ adjacent to it. (Notice that such a vertex
exists, since $G$ has no isolated vertices.) Let $D_0' = \{v'\mid
v\in D_0\}$ and $V_2' = \{v'\mid v\in V_2\}$.

We claim that the set $D' = (V_1\setminus D_0)\cup D_0'\cup S\cup
V_2\cup V_2'$ is a total dominating set of $G$. If $v\in
V_1\setminus D_0$, then $v$ has a neighbor in $S$. If $v\in D_0$,
then $v$ has a neighbor in $D_0'$. If $v\in V_2$, then $v$ has a
neighbor in $V_2'$. If $v\in V_0$, then $v$ has either a neighbor
in $V_2$, or it has at least two neighbors in $V_1$, and thus by
the  definition of $S$, $v$ has a neighbor in $N(S)\cap
V_1\subseteq V_1\setminus D_0$.

Since every vertex in $S$ has at least two neighbors in $V_1$, we
have $|N(S)\cap V_1|\ge 2|S|$. Therefore, we can bound the size of
$D'$ from above as follows:
\begin{eqnarray}\label{eq1}
 |D'| = (|V_1\setminus D_0|+|D_0'|)+|S|+(|V_2|+|V_2'|) \le
   |V_1|+\frac{|N(S)\cap V_1|}{2}+2|V_2|\,.
\end{eqnarray}
%\begin{eqnarray}\label{eq1}
%\nolabel |D'| &=& (|V_1\setminus D_0|+|D_0'|)+|S|+)|V_2|+|V_2'|)\\
%   &\le&
%   |V_1|+\frac{|N(S)\cap V_1|}{2}+2|V_2|\,.
%\end{eqnarray}

If $D_0 \neq\emptyset$, then $|N(S)\cap V_1|\le |V_1|-1$ and hence
by~\eqref{eq1}, we have
$$|D'| \le  |V_1|+\frac{|V_1|-1}{2}+2|V_2|
\le \frac{3(|V_1|+2|V_2|)-1}{2} = \frac{3\gamawtwo(G)-1}{2}\,.$$

If $V_2 \neq \emptyset$, then $2|V_2|\le 3|V_2|-1$ and
by~\eqref{eq1} we obtain
$$|D'| \le |V_1|+\frac{|N(S)\cap V_1|}{2}+3|V_2|-1<
\frac{3(|V_1|+2|V_2|)-1}{2} = \frac{3\gamawtwo(G)-1}{2}\,.$$

Finally, if $D_0 = \emptyset$ and $V_2 = \emptyset$, then $D' =
V_1\cup S$, and we can obtain a smaller total dominating set $D''$
by deleting from $D'$ an arbitrary vertex of $V_1$. Indeed, every
vertex in $V_0$ has at least two neighbors in $V_1$, and hence it
has a neighbor in $D''$. We can bound the size of $D''$ from above
as follows:
$$|D''| = |V_1|+|S|-1\le |V_1|+\frac{|N(S)\cap V_1|}{2}-1\le \frac{3|V_1|}{2}-1
<\frac{3\gamawtwo(G)-1}{2}\,.$$

In either case, we obtain $\gamat(G)\le
\frac{3\gamawtwo(G)-1}{2}$.

\end{proof}

\subsubsection*{Bound (4,3)}

\begin{prp}\label{prp:gamma-set2-gamma-w2}
For every graph $G$ with at least one edge, $\gamasettwo(G)\le
2\gamawtwo(G)-1$.
\end{prp}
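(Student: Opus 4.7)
The plan is to take a minimum weight weak $2$-dominating function $f$ of $G$, partition $V(G)$ into the level sets $V_i = f^{-1}(i)$ for $i \in \{0,1,2\}$, and build a $\{2\}$-dominating function $g$ whose weight meets the target $2\gamawtwo(G)-1$. As a first attempt, let $g_0$ assign weight $2$ to every vertex of $V_1 \cup V_2$ and weight $0$ to every vertex of $V_0$. Vertices in $V_1 \cup V_2$ dominate themselves with $g_0$-weight $2$, and the weak $2$-domination condition forces every $v \in V_0$ to have either a neighbor in $V_2$ (so $g_0(N(v)) \ge 2$) or at least two neighbors in $V_1$ (so $g_0(N(v)) \ge 4$). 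Hence $g_0$ is $\{2\}$-dominating with $g_0(V) = 2|V_1|+2|V_2| = 2\gamawtwo(G) - 2|V_2|$, which already proves the claim when $V_2 \neq \emptyset$.

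It remains to handle $V_2 = \emptyset$, where every vertex of $V_0$ has at least two neighbors in $V_1$ and I must save at least one unit compared with $g_0$. If $V_1$ contains two adjacent vertices $u,v$, set $g(u)=g(v)=1$ and keep $g=g_0$ elsewhere. This saves two units, and feasibility is preserved because $u$ and $v$ cover one another through the edge $uv$, while every $V_0$ vertex still sees at least two $V_1$ neighbors, each of $g$-weight at least $1$.

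The delicate subcase is when $V_1$ is also independent. Since $G$ has an edge and $V = V_0 \cup V_1$, at least one vertex $v \in V_1$ must be non-isolated, and its neighbors then lie in $V_0$; otherwise every $V_1$ vertex would be isolated, which, together with the fact that each $V_0$ vertex would then have no $V_1$ neighbor at all, would force $V_0 = \emptyset$ and contradict the presence of an edge. Pick such $v$ and a neighbor $u \in N(v) \cap V_0$; the weak $2$-domination inequality at $u$ yields some $v' \in V_1 \cap N(u)$ distinct from $v$. Define $g$ by $g(v)=g(v')=g(u)=1$ and $g=g_0$ otherwise, so that $g(V) = 2(|V_1|-2)+3 = 2|V_1|-1 = 2\gamawtwo(G)-1$. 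To verify feasibility, note that $u$ lies in both $N[v]$ and $N[v']$ with $g(u)=1$, so $g(N[v])\ge g(v)+g(u)=2$ and likewise $g(N[v'])\ge 2$; the closed neighborhood of $u$ has $g$-weight $g(u)+g(v)+g(v')=3$; and any other $V_0$ vertex still has at least two $V_1$ neighbors, each of $g$-weight at least $1$. The main obstacle is producing the triple $(v,u,v')$ in this last subcase, and it is precisely the combination of $G$ having an edge with the weak $2$-domination inequality at $u$ that delivers it.
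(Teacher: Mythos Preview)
Your proof is correct and in fact more elementary than the paper's. Both arguments begin identically---take a minimum weak $2$-dominating function $f$, split into level sets $V_0,V_1,V_2$, and dispose of the case $V_2\neq\emptyset$ by doubling the positive part---but diverge when $V_2=\emptyset$.

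In that case, the paper builds an auxiliary graph $H$ on $V_1'$ (the non-isolated part of $V_1$), joining two vertices whenever they share a $V_0$-neighbor, then invokes Gallai's structural result that a minimum edge cover of $H$ is a spanning forest of stars (hence has at most $|V_1'|-1$ edges); the chosen common neighbors together with $V_1'$ form the support of $g$. You instead split further on whether $V_1$ contains an edge: if so, drop the $g_0$-weight of two adjacent $V_1$-vertices to $1$; if $V_1$ is independent, locate a single path $v\,u\,v'$ with $v,v'\in V_1$ and $u\in V_0$ and set all three to weight $1$. Your feasibility checks are sound in both subcases, and the existence argument for the path (using that $G$ has an edge, so not all of $V_1$ can be isolated, and then applying the weak $2$-domination inequality at $u$) is complete.

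The payoff of your route is that it avoids Gallai's theorem entirely and gives a shorter self-contained argument. The paper's edge-cover machinery is heavier but would adapt more readily if one wanted to sharpen the bound or handle analogous parameters where a single local adjustment does not suffice.
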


\begin{proof}
Let $G$ be a graph with at least one edge.

Let $f:V(G)\to\{0,1,2\}$ be a minimum weight weak $2$-dominating
function of $G$. Let $V_i = \{v\in V(G)\mid f(v) = i\}\,,$ for
$i\in \{0,1,2\}$.

We first deal with the case when $V_2 \ne\emptyset$. The function
$g:V(G)\to\{0,1,2\}$ that agrees with $f$ on all vertices except
on the vertices with $f$-value equal to $1$, each of which $g$
maps to $2$, is a $\{2\}$-dominating function of $G$. Since
$$g(V(G))  = 2|V_1|+2|V_2|\le 2(|V_1|+2|V_2|-1) = 2f(V(G))-2
=2\gamawtwo(G)-2\,,$$ we have the inequality $\gamasettwo(G)\le
2\gamawtwo(G)-2$ in this case.

Assume now that $V_2 = \emptyset$. Note that the set $I$ of
isolated vertices in $G$ is a subset of $V_1$, and let $V_1' =
V_1\setminus I$.

We may assume that every vertex $u$ in $V_1'$ has a neighbor in
$V_0$. Otherwise, if all neighbors of $u$ are in $V_1'$, then we
can obtain a weak $2$-dominating function of $G$ of the same
weight as $f$ by moving the weight $1$ from $u$ to one of its
neighbors, and the previous case ($V_2\neq \emptyset$) applies.

Now, let $H$ be the graph with vertex set $V_1'$ in which two
vertices $u$ and $v$ are adjacent if and only if they have a
common neighbor $n(u,v)$ in $V_0$.
Let $h:E(H)\to\{0,1\}$ be a minimum weight edge cover of $H$,
let $C= \{e\in E(H)\mid h(e) = 1\}$ be the support of $h$, and let $N = \{n(u,v)\mid uv\in C\}$.
Note that $|N|=|C|\le |V_1'|-1$ (recall that a minimum edge cover
induces a spanning forest of stars~\cite{Gallai1959}). Consider the
function $g:V(G)\to\{0,1,2\}$, defined as follows:
$$g(v)  =\left\{
  \begin{array}{ll}
    2, & \hbox{if $v\in I$;} \\
    1, & \hbox{if $v\in N\cup V_1'$;} \\
    0, & \hbox{otherwise.}
  \end{array}
\right.$$ Then, $g$ is a $\{2\}$-dominating function of $G$: if
$v\in I$, then clearly $g(N[v]) = 2$; if $v\in N\cup V_1'$, then
$v$ has a neighbor in $N\cup V_1'$, and hence $g(N[v]) \ge 2$; if
$v\in V_0\setminus N$, then $v$ has at least two neighbors in
$V_1'$, and again $g(N[v]) \ge 2$ holds. Since $$g(V(G))  =
2|I|+|V_1'| +|N| \le 2|I|+|V_1'| + |V_1'|-1 = 2|V_1|-1 =
2f(V(G))-1 =2\gamawtwo(G)-1\,,$$ we have the desired inequality
$\gamasettwo(G)\le 2\gamawtwo(G)-1$.
\end{proof}

\subsubsection*{Bound (5,3)}

\begin{prp}\label{prp:gamma-tset2-gamma-w2}
For every graph $G$ with no isolated vertices, $\gamatsettwo(G)\le
2\gamawtwo(G)$.
\end{prp}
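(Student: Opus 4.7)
The plan is, starting from a minimum weight weak $2$-dominating function $f:V\to\{0,1,2\}$ of $G$, to build a total $\{2\}$-dominating function $g:V\to\{0,1,2\}$ with $g(V)\le 2f(V)=2\gamawtwo(G)$. Write $V_i=f^{-1}(i)$. I first try the simple construction $g(v)=2$ if $v\in V_1\cup V_2$ and $g(v)=0$ otherwise. This gives $g(V)=2(|V_1|+|V_2|)\le 2\gamawtwo(G)$ with a ``surplus'' of $2|V_2|$, and it already satisfies $g(N(v))\ge 2$ at every $v\in V_0$, since $f(N(v))\ge 2$ forces $v$ to have a neighbor in $V_1\cup V_2$. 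The only vertices at which $g(N(v))\ge 2$ can fail are in the set $B:=\{v\in V_1\cup V_2 \colon N(v)\subseteq V_0\}$.

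To repair the constraints at $B$, I would add a set $U\subseteq V_0$ of ``boosted'' vertices (setting $g(u)=2$ for $u\in U$) so that every $v\in B$ has a neighbor in $U$; this costs an additional $2|U|$ in total weight. When $|V_2|$ is large enough, one can choose $|U|\le|V_2|$ and the weight bound follows. But this naive approach can fail when $V_1\cap B$ is large relative to $V_2$ (e.g.\ on $P_3$ or on $P_4$ with the minimum $f=(1,0,1,1)$), in which case a finer construction is needed: define $g(v)=f(v)$ by default and boost only a chosen subset $X\subseteq V$ (setting $g(v)=2$ for $v\in X$), paying cost $2-f(v)$ per boosted vertex. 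The requirement that $g$ be total $\{2\}$-dominating translates into covering conditions on $X$ with respect to the set of under-dominated vertices $T=\{v\in V_1\cup V_2:f(N(v))<2\}$: for $v\in B\subseteq T$ one needs $X$ to hit $N(v)$ (a subset of $V_0$), and for $v\in T\setminus B$ one needs $X$ to hit $N(v)\cap(V_0\cup V_1)$.

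The heart of the proof is then showing that such an $X$ exists with total cost at most $\gamawtwo(G)$, and this is where Theorem~\ref{lem:rho+tau} is used. I would encode the weighted covering problem as the computation of $\tau_2(H)$ of an auxiliary graph $H$ built from $G$ using vertex-splitting (each $V_0$-vertex contributes weight $2$, each $V_1$-vertex contributes weight $1$, each $V_2$-vertex is free, and edges of $H$ correspond to the ``effective'' covering relations between $T$ and its potential helpers). Gallai's identity $\rho_2(H)+\tau_2(H)=2|V(H)|$ then bounds $\tau_2(H)$ from above, and the structural constraint $f(N(v))\ge 2$ at $V_0$-vertices of $G$ translates into a lower bound on $\rho_2(H)$ sufficient to close the estimate and yield $\tau_2(H)\le\gamawtwo(G)$. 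The main obstacle is the correct design of $H$ (the vertex-splitting must properly model the asymmetry between the cost-$2$ boost of a $V_0$-vertex and the cost-$1$ boost of a $V_1$-vertex) so that Gallai's identity delivers exactly the desired bound.
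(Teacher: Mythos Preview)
Your outline is headed in a reasonable direction but leaves the decisive step as a hand-wave: you never say what the auxiliary graph $H$ is, and the claim that ``the structural constraint $f(N(v))\ge 2$ at $V_0$-vertices translates into a lower bound on $\rho_2(H)$ sufficient to yield $\tau_2(H)\le\gamawtwo(G)$'' is asserted, not argued. Without the construction of $H$ there is nothing to check, and the covering problem you isolate (choose $X$ so that each deficient vertex $v\in T$ receives enough boosted capacity from $N(v)\cap X$) is a set-multi-cover constraint, not obviously a $2$-vertex-cover constraint even after vertex splitting. You yourself flag ``the correct design of $H$'' as the main obstacle; as written, that obstacle is not surmounted, so the proof has a genuine gap at exactly the point where Gallai's theorem is supposed to enter.

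It is also worth noting that the paper's proof, while it does invoke Theorem~\ref{lem:rho+tau}, uses it quite differently from what you envisage. The paper first strips off several easy pieces by case analysis (closed neighbourhoods of $V_2$; vertices of $V_1$ with $V_1$-neighbours; $V_1$-vertices all of whose neighbours lie in the already-handled part), paying at most twice the corresponding $f$-weight each time. What survives is a bipartite subgraph on $(V_1,V_0)$ in which every remaining $V_0$-vertex still has at least two $V_1$-neighbours; after trimming the $V_0$-side to degree exactly $2$, each nontrivial component $K$ is the subdivision of a graph $K'$ whose vertices are the $V_1$-vertices and whose edges correspond to the $V_0$-vertices. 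The total $\{2\}$-dominating conditions on $K$ then decouple \emph{exactly} into a $2$-vertex-cover condition on $K'$ (from the $V_0$-constraints) and a $2$-edge-cover condition on $K'$ (from the $V_1$-constraints), so a feasible function of weight $\tau_2(K')+\rho_2(K')=2|V(K')|=2|V_1\cap K|$ is obtained directly. In other words, the paper uses both sides of Gallai's identity constructively and simultaneously, rather than using one side as an upper bound via a lower bound on the other. Your single-parameter $\tau_2(H)$ formulation would need a substantially different, and as yet unspecified, auxiliary construction to reproduce this.
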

\begin{proof}
Let $f:V\to\{0,1,2\}$ be a minimum weight weak $2$-dominating
function of a graph $G=(V,E)$ with no isolated vertices. In
the proof we will construct a total $\{2\}$-dominating function
$g$ of $G$ with weight less or equal to $2f(V)$, yielding the
bound $\gamatsettwo(G)\le 2\gamawtwo(G)$.

Let $V_i = \{v\in V(G)\mid f(v) = i\}\,,$ for $i\in \{0,1,2\}$.
Note that for any vertex $x$ in $V_0$ we already have $f(N(x))\ge
2$, which is the condition imposed on vertices in the total
$\{2\}$-dominating set. On the other hand, $f(N(y))$ can be less
than 2 for vertices $y\in V_1\cup V_2$. Note that as $f$ is
minimum, each $y\in V_2$ is adjacent to a vertex in $V_0$. Suppose
that $V_2\neq \emptyset$ and let $Y$ be a minimum set of vertices
from $V_0$ that dominate all vertices from $V_2$; i.e., $V_2\subset
N(Y)$ and $Y$ is a smallest possible subset of $V_0$ with this
property. Clearly, $|V_2|\ge |Y|$. Now, let $f_1:V\to\{0,1,2\}$ be
the function obtained from $f$
by setting $f_1(y)=2$ for all
$y\in Y$ (changing $f$ only in vertices of $Y$). Note that for any
vertex $x\in V_0\cup V_2$, we have $f_1(N(x))\ge 2$. Moreover,
when restricted to the subgraph $G_1$ of $G$ induced by $V_2\cup
N(V_2)$, $f_1$ is a total $\{2\}$-dominating function of $G_1$
such that $f_1(V(G_1))\le 2f(V(G_1))$. Thus it suffices to
consider the remainder of the graph, i.e., $G-V(G_1)$; we remark
that the function $g$, which we are constructing, coincides with
$f_1$ on $V(G_1)$.

Consider the set $Z$ of vertices $z\in V_1$ that are adjacent to
some other vertex in $V_1$ (in the case when a vertex $z$ from
$V_1$ is adjacent to a vertex in $V_2$, then it is already in
$G_1$, with $f_1(N(z))\ge 2$, so this case need not be considered
any more). If $Z\neq\emptyset$, then let $f_2:V\to\{0,1,2\}$ be the
function obtained from $f_1$ only by increasing the value of all
vertices $z$ from $V_1$ that have a neighbor in $V_1$, by
setting $f_2(z)=2$. Denote by $G_2$ the subgraph of $G$, induced
by $Z\cup N(Z)$. Clearly, $f_2(N(z))\ge 2$ for all vertices
$z \in Z\cup N(Z)$. Moreover, when restricted to $G_2$, $f_2$ is a total
$\{2\}$-dominating function of $G_2$ such that $f_2(V(G_2))\le
2f(V(G_2))$. Thus it suffices to consider the remainder of the
graph, i.e., $G-(V(G_1)\cup V(G_2))$; we remark that the function
$g$, which we are constructing, coincides with $f_2$ on $V(G_1\cup
G_2)$.

Let $x\in V_1$, such that $x$ is not adjacent to any other vertex
in $V_1\cup V_2$ (thus $x$ lies in $G-(V(G_1)\cup V(G_2))$). Then
$x$ has a neighbor in $V_0$, since $G$ has no isolated vertices.
It is possible that all neighbors of $x$ are in $G_1\cup G_2$. Let
$G_3$ be the subgraph of $G$, induced by all vertices $x$ in $V_1$
and not in $G_1\cup G_2$, such that all their neighbors are in
$G_1\cup G_2$. To each vertex of $G_3$, we set $f_3(x)=0$, and to
an arbitrary neighbor $y\in V_0$ of $x$, we set $f_3(y)=2$ (and
$f_3(u)=f_2(u)$ for all other vertices of $G$). Note that
$f_3(N(x))\ge 2$ for any $x\in G_3$, and $f_3$ restricted to
$V(G_1)\cup V(G_2)\cup V(G_3)$ is a total $\{2\}$-dominating
function of the subgraph induced by $V(G_1)\cup V(G_2)\cup
V(G_3)$. In addition, $f_3(V(G_1)\cup V(G_2)\cup V(G_3))\le
2f(V(G_1)\cup V(G_2)\cup V(G_3))$; we remark that the function
$g$, which we are constructing, coincides with $f_3$ on $V(G_1)\cup
V(G_2)\cup V(G_3)$.

Denote by $H$ the remainder of the graph, i.e., $H=G-(V(G_1)\cup
V(G_2)\cup V(G_3))$. Note that each vertex in $H$ from $V_1$ has
at least one neighbor in $V_0\cap V(H)$, and also each vertex from
$V_0\cap V(H)$ has at least two neighbors from $V_1\cap V(H)$ (the
latter is because $f$ is a weak $2$-dominating function, and
vertices from $V_0\cap V(H)$ are not adjacent to any vertex from
$V_2$ nor to any of the vertices of $V_1$ that were settled in the
previous cases). For each $x\in V(H)$ with $f(x)=0$, choose
arbitrarily any of its two neighbors $y,z$ in $V_1$, and delete
all other edges between $x$ and its neighbors in $(V_0\cup
V_1)\setminus\{y,z\}$; call the resulting graph $H'$. Clearly,
$H'$ is a spanning subgraph of $H$, in which each vertex in $V_0$
has degree exactly 2, while vertices from $V_1$ can have an
arbitrary degree, including 0. Remove all the isolated vertices
from $H'$ to obtain the graph $H''$ (we remark that the isolated
vertices will be settled at the end of the proof). Let $K$ be an
arbitrary connected component of $H''$. Since vertices from $V_0$
in $K$ have degree 2 and are adjacent to two vertices from $V_1$,
$K$ is a subdivision of a graph $K'$, whose vertices correspond to
vertices of $V_1\cap K$, and edges in $K'$ correspond to vertices
in $V_0\cap K$.

Now, a function $h':V(K')\cup E(K') \to\{0,1,2\}$ in a natural way
corresponds to the function $h:V(K)\to\{0,1,2\}$, where
$h'(x)=h(x)$ for any $x\in V_1$, while $h'(e)$ where $e\in E(K')$
coincides with $h(y)$, where $y\in V(K)$ is the subdivision vertex
of $e$. In addition, the following conditions imposed to $h'$: \\
for every vertex $x\in V(K')$, $$\sum_{xy\in E(K')}{h'(xy)}\ge 2$$
and for every edge $xy\in E(K')$, $$h'(x)+h'(y)\ge 2,$$ are
equivalent to the corresponding function $h$ being a total
$\{2\}$-dominating function of $K$. Moreover, as
$f(K)=f_3(K)=|V_1\cap K|=|V(K')|$, it suffices to prove that there
exists a function $h'$ satisfying the above conditions such that
the total sum of values of $h'$ is at most $2|V(K')|$, to
establish the desired bound of the theorem in the component $K$.
Let $h'_1: V(K') \to\{0,1,2\}$ and $h'_2: E(K') \to\{0,1,2\}$ be
minimum weight $2$-vertex and $2$-edge covers of $K'$, respectively.
Define $h'(v) = h'_1(v)$ for each $v \in V(K')$ and $h'(e) =
h'_2(e)$ for each $e \in E(K')$. By definition of $2$-vertex cover
and $2$-edge cover, $h'$ satisfies the desired properties. Moreover,
by Theorem~\ref{lem:rho+tau}, the total sum of values of $h'$ is
exactly $2|V(K')|$. By the observation above, the corresponding
function $h:V(K)\to\{0,1,2\}$ is a total $\{2\}$-dominating
function of $K$ of weight at most $2|V(K)\cap V_1|$.

Clearly, in the same way as $K$ all connected components of $H''$
of order at least two can be analyzed, and so the function $h$ is
extended to all vertices of $H''$. Now, $H'$ is obtained from
$H''$ by adding connected components with only one vertex, and
they have not yet been considered in the proof. Recall that each
such vertex $x\in V(H')$ is in $V_1$, and we set $h(x)=0$ and
$h(y)=2$ to any neighbor $y\in V(H')$, with which the value of
$h$, which corresponds to $x$, is $2\cdot f(x)$, as desired.
Altogether we deduce that $h$ is a total $\{2\}$-dominating
function of $H'$ of weight at most $2|V(H')\cap V_1|$. As $H'$ is a
spanning subgraph of $H$, clearly $h$ is also a total
$\{2\}$-dominating function of $H$ of the same weight. Finally, we
set $g(x)=f_3(x)$ for all vertices in $V(G)\setminus V(H)$ and
$g(x)=h(x)$ for all vertices in $V(H)$. We conclude the main part
of this proof by observing that $g$ is a total
$\{2\}$-dominating function of $G$, with $g(V)\le 2f(V)$.
\end{proof}

\subsubsection*{Bound (9,3)}

\begin{prp}\label{prp:rgamma-w2-gamma-w2}
For every graph $G\neq K_1$, it holds that $\rgamawtwo(G)\le
2\gamawtwo(G)-2$.
\end{prp}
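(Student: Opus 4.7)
The plan is to start with a minimum weight weak $2$-dominating function $f:V\to\{0,1,2\}$ of $G$, partition $V$ into $V_i = f^{-1}(i)$ for $i\in\{0,1,2\}$, and convert $f$ into a rainbow weak $2$-dominating function $g:V\to\mathcal{P}(\{a,b\})$ whose weight is at most $2f(V)-2=2\gamawtwo(G)-2$. Two cases arise naturally, depending on whether $V_2$ is empty.

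When $V_2\neq\emptyset$, I would use a pure ``doubling'' trick: set $g(v)=\{a,b\}$ whenever $f(v)\geq 1$, and $g(v)=\emptyset$ whenever $f(v)=0$. The weak $2$-domination inequality $f(N(v))\geq 2$ for every $v\in V_0$ guarantees that such a $v$ has either a neighbor in $V_2$ or at least two neighbors in $V_1$, so in both subcases $g_{\cup}(N(v))=\{a,b\}$ and $g$ is indeed a rainbow weak $2$-dominating function. Its total weight is $2|V_1|+2|V_2|=2\gamawtwo(G)-2|V_2|\leq 2\gamawtwo(G)-2$ because $|V_2|\geq 1$.

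The more delicate case is $V_2=\emptyset$, where the blunt doubling trick leaves no slack to spare. Here I would first observe that the hypothesis $G\neq K_1$ forces $|V_1|\geq 2$: either $V_0=\emptyset$, in which case $V=V_1$ has at least two vertices, or some $v\in V_0$ exists, and the constraint $f(N(v))\geq 2$ combined with $V_2=\emptyset$ already supplies two distinct vertices of $V_1$ neighboring $v$. Pick any two distinct $v_1,v_2\in V_1$ and define $g(v_1)=\{a\}$, $g(v_2)=\{b\}$, $g(u)=\{a,b\}$ for $u\in V_1\setminus\{v_1,v_2\}$, and $g(u)=\emptyset$ for $u\in V_0$. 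For every $u$ with $g(u)=\emptyset$, the (at least two) neighbors of $u$ in $V_1$ either include a vertex outside $\{v_1,v_2\}$ (which carries the full label $\{a,b\}$) or coincide with $\{v_1,v_2\}$ (whose labels union to $\{a,b\}$); either way $g_{\cup}(N(u))=\{a,b\}$. The total weight is $2(|V_1|-2)+1+1=2|V_1|-2=2\gamawtwo(G)-2$.

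The main obstacle is precisely this second case: one must produce a construction that is strictly cheaper than the blunt doubling, and one must verify the rainbow condition for a $u\in V_0$ whose only $V_1$-neighbors happen to be the two ``discounted'' vertices $v_1,v_2$. Both issues are resolved simultaneously by the lower bound $|V_1|\geq 2$ (supplied by $G\neq K_1$ together with $f(N(v))\geq 2$ for $v\in V_0$) and by the observation that any $v\in V_0$ with exactly two $V_1$-neighbors $v_1,v_2$ still sees both colors via $g(v_1)\cup g(v_2)$.
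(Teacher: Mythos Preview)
Your proof is correct and follows essentially the same approach as the paper's: the same doubling trick in the case $V_2\neq\emptyset$, and the same choice of two distinguished vertices in $V_1$ receiving singleton labels in the case $V_2=\emptyset$. The only (cosmetic) difference is that the paper splits the second case further into $V_0=\emptyset$ and $V_0\neq\emptyset$, handling the former with the constant labeling $g\equiv\{a\}$; your unified treatment works just as well, since when $V_0=\emptyset$ the rainbow constraint is vacuous.
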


\begin{proof}
Let $f:V(G)\to \{0,1,2\}$ be a minimum weak $2$-dominating
function of $G$, and let $V_i = \{v\in V(G)\mid f(v) = i\}\,,$ for
$i\in \{0,1,2\}$. We consider two cases.

\medskip

\noindent{\it Case 1. $V_2\neq \emptyset$.} Define a function
$g:V(G)\to{\cal P}(\{a,b\})$ as follows:
$$g(v) = \left\{
  \begin{array}{ll}
    \{a,b\}, & \hbox{if $v \in V_1\cup V_2$;} \\
        \emptyset, & \hbox{otherwise}
  \end{array}
\right.$$ Since $f$ is weak $2$-dominating function of $G$, every
vertex $v\in V(G)$ with $f(v) = 0$ is adjacent to either two
vertices of $f$-weight $1$, or to one vertex of $f$-weight $2$.
Consequently, every vertex $v\in V(G)$ with $g(v) =\emptyset$
satisfies $g_\cup(N(v)) = \{a,b\}$. Thus, $g$ is a rainbow weak
$2$-dominating function of $G$, and we have $\rgamawtwo(G)\le
g(V(G)) = 2(|V_1|+|V_2|)\le 2(|V_1|+2|V_2|)-2 = 2f(V(G))-2 =
2\gamawtwo(G)-2$.

\medskip

\noindent{\it Case 2. $V_2= \emptyset$.} In this case, every
vertex $v\in V(G)$ with $f(v) = 0$ is adjacent to two vertices of
$f$-weight $1$. If $V_1 = V(G)$, then the function $g:V(G)\to
{\cal P}(\{a,b\})$ assigning $\{a\}$ to every vertex is a rainbow
weak $2$-dominating function of $G$, yielding in this case
$\rgamawtwo(G)\le g(V(G)) = |V(G)|\le 2|V(G)|-2= 2\gamawtwo(G)-2$,
where the inequality holds since $G\neq K_1$. If $V_1\neq V(G)$,
then there is a vertex $w\in V(G)$ with $f(w) = 0$ and
consequently $|V_1|\ge 2$. Let $u,v\in V_1$ be two distinct
vertices. Define a function $g:V(G)\to{\cal P}(\{a,b\})$ as
follows:
$$g(w) = \left\{
  \begin{array}{ll}
    \{a\}, & \hbox{if $w = u$;} \\
    \{b\}, & \hbox{if $w = v$;} \\
    \{a,b\}, & \hbox{if $w \in V_1\setminus\{u,v\}$;} \\
        \emptyset, & \hbox{otherwise.}
  \end{array}
\right.$$ Since $f$ is weak $2$-dominating function of $G$, every
vertex $v\in V(G)$ with $f(v) = 0$ is adjacent to either two
vertices of $f$-weight $1$, or to one vertex of $f$-weight $2$. It
follows that every vertex $w\in V(G)$ with $g(w) =\emptyset$ is
adjacent either to a vertex with $g$-label $\{a,b\}$, or to
vertices $u$ and $v$; in either case we have $g_\cup(N(w)) =
\{a,b\}$. Thus, $g$ is a rainbow weak $2$-dominating function of
$G$, and we have $\rgamawtwo(G)\le g(V(G)) = 2+2(|V_1|-2) =
2|V_1|-2 = 2f(V(G))-2 = 2\gamawtwo(G)-2$.
\end{proof}

\subsubsection*{Bound (13,3)}

\begin{prp}\label{prp:gamma-R-gamma-w-2}
For every graph $G$, it holds that $\gamaR(G)\le 2\gamawtwo(G)-1$.
\end{prp}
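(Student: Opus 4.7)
The plan is to convert a minimum weight weak $2$-dominating function $f$ of $G$ into a Roman dominating function $g$ of weight at most $2f(V)-1$. Write $V_i = f^{-1}(i)$ for $i\in\{0,1,2\}$, so that $\gamawtwo(G) = |V_1|+2|V_2|$. First I would try the straightforward rule $g(v)=2$ for $v\in V_1\cup V_2$ and $g(v)=0$ for $v\in V_0$. Roman domination is immediate: if $g(v)=0$ then $f(v)=0$, and the weak $2$-domination hypothesis $f(N(v))\ge 2$ forces some neighbor $w$ with $f(w)\ge 1$, giving $g(w)=2$.

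Next I would analyse the weight $g(V)=2(|V_1|+|V_2|)$ according to whether $V_2$ is empty. When $V_2\ne\emptyset$, one has $g(V)\le 2|V_1|+4|V_2|-2 = 2\gamawtwo(G)-2$, which is even stronger than the claimed bound. When $V_2=\emptyset$, the weight is exactly $2|V_1|=2\gamawtwo(G)$, which is one unit too much; to shave it off I would pick any $v\in V_1$ and reset $g(v)$ to $1$. The crucial point is that in this case $f$ is actually a $2$-dominating function, so every $u\in V_0$ has at least two neighbors in $V_1$. Consequently, after the modification, every $u$ with $g(u)=0$ still sees a neighbor in $V_1\setminus\{v\}$ carrying weight $2$, so $g$ remains Roman dominating; the new total weight is $2|V_1|-1 = 2\gamawtwo(G)-1$.

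The argument is essentially routine once the case split is in place, and there is no real obstacle. The one observation worth stressing is that the naive doubling map is wasteful precisely when $V_2$ is empty, which is exactly the scenario in which weak $2$-domination upgrades automatically to genuine $2$-domination and supplies the extra $V_1$-neighbor needed to save a unit. A minor edge case to record is that $V_1=V_2=\emptyset$ is incompatible with the weak $2$-domination condition in a nonempty graph, so in the second case there is always a vertex $v\in V_1$ available for the decrement step.
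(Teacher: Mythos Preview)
Your proof is correct and follows essentially the same approach as the paper: define $g$ by assigning $2$ to every vertex in $V_1\cup V_2$, then split into the cases $V_2\neq\emptyset$ (where the weight $2(|V_1|+|V_2|)\le 2\gamawtwo(G)-2$ automatically) and $V_2=\emptyset$ (where one vertex of $V_1$ is downgraded to weight $1$, using that every vertex in $V_0$ has at least two neighbors in $V_1$). The paper's argument is identical in structure and detail.
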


\begin{proof}
Let $f:V(G)\to\{0,1,2\}$ be a minimum weight weak $2$-dominating
function of $G$.

Suppose first that there exists a vertex $u\in V(G)$ such that
$f(u) = 2$. Then, let \hbox{$g:V(G)\to\{0,1,2\}$} be defined as
follows:
$$g(v)= \left\{
         \begin{array}{ll}
           2, & \hbox{if $f(v) \in \{1,2\}$;} \\
           0, & \hbox{otherwise.}
         \end{array}
       \right.$$
Note that $g$ is a Roman dominating function of $G$ of total
weight at most $2f(V(G))-2 = 2\gamawtwo(G)-2$. On the other hand,
if $f(v) \in \{0,1\}$ for all $v\in V(G)$, then let $u\in V(G)$ be
a vertex such that $f(u) = 1$, and let $g:V(G)\to\{0,1,2\}$ be
defined as follows:
$$g(v)= \left\{
         \begin{array}{ll}
           2, & \hbox{if $v\neq u$ and $f(v) = 1$;} \\
           1, & \hbox{if $v= u$;} \\
           0, & \hbox{otherwise.}
         \end{array}
       \right.$$
Note that $g$ is a Roman dominating function of $G$ of total
weight exactly $2f(V(G))-1 = 2\gamawtwo(G)-1$. Hence, we have the
desired inequality in each case.
\end{proof}

\subsubsection*{Bound (1,4)}

\begin{prp}\label{prp:gamma-gamma-set2}
For every graph $G$, $\gama(G)\le \gamasettwo(G)-1$.
\end{prp}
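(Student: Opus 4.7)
My plan is to take a minimum weight $\{2\}$-dominating function $f:V(G)\to\{0,1,2\}$ of $G$ and, out of it, construct a dominating set of $G$ of size at most $\gamasettwo(G)-1$. The natural candidate is the support $D=f^{-1}(\{1,2\})$: since $f(N[v])\ge 2$ for every $v$, any vertex with $f(v)=0$ has a neighbor with positive weight, so $D$ is a dominating set of $G$. I will split the argument into two cases, depending on whether some vertex carries weight $2$.

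\textbf{Case 1:} $f^{-1}(2)\neq\emptyset$. Here I just use the size bound directly, namely
\[
|D|=|f^{-1}(1)|+|f^{-1}(2)|<|f^{-1}(1)|+2|f^{-1}(2)|=f(V)=\gamasettwo(G),
\]
which already gives $\gama(G)\le |D|\le \gamasettwo(G)-1$. The strict inequality comes for free from the presence of at least one vertex of weight $2$.

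\textbf{Case 2:} $f^{-1}(2)=\emptyset$, so $f:V\to\{0,1\}$. Here $|D|$ equals $\gamasettwo(G)$ exactly, so to save one vertex I need to argue that some vertex of $D$ can be removed while $D$ still dominates $G$. From the condition $f(N[v])\ge 2$ one reads off two things: every $v\in D$ has a neighbor in $D$ (so $D$ has no isolated vertices within itself and $|D|\ge 2$), and every $v\in V\setminus D$ has at least two neighbors in $D$. Pick any $u\in D$ and consider $D\setminus\{u\}$: vertices in $D\setminus\{u\}$ dominate themselves, $u$ is dominated by one of its neighbors in $D$ (which lies in $D\setminus\{u\}$), and each $v\in V\setminus D$ still has at least one neighbor in $D\setminus\{u\}$. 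Hence $D\setminus\{u\}$ is a dominating set of $G$ of size $\gamasettwo(G)-1$.

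The step I expect to require the most care is Case 2, where I must justify that the minus $1$ is actually available; the key observation is simply that $f(N[v])\ge 2$ forces each vertex of $D$ to have a neighbor inside $D$, which is exactly what makes the removal of an arbitrary $u\in D$ harmless for the domination property.
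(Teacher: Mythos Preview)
Your proof is correct and follows essentially the same approach as the paper: take the support of a minimum $\{2\}$-dominating function and argue that it either already has size at most $\gamasettwo(G)-1$ or that one vertex can be safely deleted. The only cosmetic difference is the case split---you split on whether $V_2$ is empty, while the paper splits on whether $V_1$ is empty---but the underlying argument is the same.
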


\begin{proof}
Let $f:V(G)\to\{0,1,2\}$ be a minimum $\{2\}$-dominating
function of $G$, and let $V_i = \{v\in V(G)\mid f(v) = i\}\,,$ for
$i\in \{0,1,2\}$. If $V_1 = \emptyset$, then $V_2$ is a
dominating set of $G$ and hence in this case $\gama(G) \le
|V_2|\le 2|V_2|-1= f(V(G))-1 = \gamasettwo(G)-1$. So we may
assume that $V_1\neq\emptyset$. Let $v_1\in V_1$ and consider the
set $D = (V_1\cup V_2)\setminus \{v_1\}$. Then, $|D| =|V_1|+
|V_2|-1\le |V_1|+2|V_2|-1 = f(V(G))-1 = \gamasettwo(G)-1$. Hence,
to show that $\gama(G)\le \gamasettwo(G)-1$, it suffices to
argue that $D$ is a dominating set of $G$, that is, that
every $v\in V(G)\setminus D$ has a neighbor in $D$.
If $v\in V(G)\setminus D$ and $v\neq v_1$, then $v$ has a neighbor from
$V_2$ or two neighbors from $V_1$, because $f$ is $\{2\}$-dominating
function of $G$. In the first case, this neighbor is clearly from $D$,
while in the second case, there is at least one neighbor of $v$ from $V_1$,
which is not equal to $v_1$. Thus $v$ has a neighbor from $D$.
It remains to show that $v_1$ has a neighbor from $D$, which is
also easy because $v_1\in V_1$, hence it must have a neighbor
from $V_1\cup V_2$, which is thus from $D$.
\end{proof}

\subsubsection*{Bound (2,5)}

\begin{prp}\label{prp:gamma-t-gamma-tset2}
For every graph $G$ without isolated vertices, $\gamat(G)\le
\gamatsettwo(G)-1$.
\end{prp}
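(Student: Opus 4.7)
The plan is to start with a minimum total $\{2\}$-dominating function $f:V(G)\to\{0,1,2\}$ and convert it into a total dominating set of $G$ of size at most $\gamatsettwo(G)-1$. Partition $V(G)$ into $V_i=\{v:f(v)=i\}$ for $i\in\{0,1,2\}$, and take as the candidate set $D=V_1\cup V_2$. The natural weight bound is $|D|=|V_1|+|V_2|\le |V_1|+2|V_2|=f(V(G))=\gamatsettwo(G)$; the content of the proposition is squeezing out one more unit of savings.

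First I would verify that $D$ is a total dominating set. The defining inequality $f(N(u))\ge 2$ for every $u\in V(G)$ forces $N(u)$ to contain either a vertex of $V_2$ or at least two vertices of $V_1$; in both cases $N(u)\cap D\ne\emptyset$. Now I split into two cases, depending on whether $V_1$ is empty.

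If $V_1\ne\emptyset$, fix any $v_1\in V_1$ and set $D':=D\setminus\{v_1\}$. The key observation is that for every vertex $u\in V(G)$, the set $N(u)\cap D$ has size at least $2$ whenever $N(u)\cap V_2=\emptyset$ (since then $|N(u)\cap V_1|\ge 2$ by the total $\{2\}$-domination condition), while in the remaining case $N(u)$ already contains a vertex of $V_2\subseteq D'$. Hence deleting a single vertex of $V_1$ from $D$ never destroys total domination, so $D'$ is a total dominating set, yielding $\gamat(G)\le|D'|=|V_1|+|V_2|-1\le f(V(G))-1=\gamatsettwo(G)-1$.

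If $V_1=\emptyset$, then $V_2$ alone is a total dominating set. Note that $|V_2|\ge 2$: otherwise a unique vertex $v\in V_2$ would have $f(N(v))=0<2$, contradicting the definition of $f$. Consequently $\gamat(G)\le|V_2|=\gamatsettwo(G)/2\le\gamatsettwo(G)-1$ since $\gamatsettwo(G)\ge 4$ in this case. There is no real obstacle here; the only subtle point is ruling out $|V_2|=1$ to guarantee the $-1$ saving, which uses the total (open-neighborhood) nature of the constraint.
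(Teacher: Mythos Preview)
Your proof is correct and follows essentially the same approach as the paper: split into the cases $V_1\neq\emptyset$ (delete one vertex of $V_1$ from $V_1\cup V_2$) and $V_1=\emptyset$ (use $|V_2|\le 2|V_2|-1$). The paper's treatment of the $V_1=\emptyset$ case is slightly more direct, using only $|V_2|\ge 1$ rather than your stronger (but still correct) observation that $|V_2|\ge 2$.
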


\begin{proof}
Let $f:V(G)\to\{0,1,2\}$ be a minimum total $\{2\}$-dominating
function of $G$, and let $V_i = \{v\in V(G)\mid f(v) = i\}\,,$ for
$i\in \{0,1,2\}$. If $V_1 = \emptyset$, then $V_2$ is a total
dominating set of $G$ and hence in this case $\gamat(G) \le
|V_2|\le 2|V_2|-1= f(V(G))-1 = \gamatsettwo(G)-1$. So we may
assume that $V_1\neq\emptyset$. Let $v_1\in V_1$ and consider the
set $D = (V_1\cup V_2)\setminus \{v_1\}$. Then, $|D| =|V_1|+
|V_2|-1\le |V_1|+2|V_2|-1 = f(V(G))-1 = \gamatsettwo(G)-1$. Hence,
to show that $\gamat(G)\le \gamatsettwo(G)-1$, it suffices to
argue that $D$ is a total dominating set of $G$, that is, that
every $v\in V(G)$ has a neighbor in $D$. Since $f$ is a total
$\{2\}$-dominating function of $G$, every vertex of $v$ has either
two neighbors in $V_1$ or one neighbor in $V_2$. In particular,
every vertex of $v$ has  a neighbor in either $V_1\setminus
\{v_1\}$, or in $V_2$, and hence in $D$.
\end{proof}

\subsubsection*{Bound (7,6)}

\begin{prp}\label{prp:gamma-x2-gamma-2}
For every graph $G$ without isolated vertices, it holds that
${\displaystyle\gamma}\,\!\!_{{\scriptstyle \times} \! 2}(G)\le 2\gamatwo(G)-1$.
\end{prp}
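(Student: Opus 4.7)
My plan is to work with minimum 2-dominating \emph{sets} (legitimate since the image is $\{0,1\}$). Pick a minimum 2-dominating set $S$ of $G$, with $|S|=\gamatwo(G)$, and attempt to use $S$ itself as a double-dominating set, augmenting it as little as necessary. The key observation is that every $v\in V(G)\setminus S$ already has at least two neighbors in $S$ by definition of 2-domination, so for any $D\supseteq S$ the double-domination condition is automatic at every vertex outside $S$. The only possible failures occur at vertices $v\in S$ that have no neighbor in $S$; let $A\subseteq S$ denote the set of such troublesome vertices. If $A=\emptyset$, taking $D:=S$ already works and yields $\gamaxtwo(G)\le|S|\le 2|S|-1$.

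The bulk of the proof is then to construct, whenever $A\neq\emptyset$, a set $T\subseteq V(G)\setminus S$ with $|T|\le |S|-1$ such that every $v\in A$ has at least one neighbor in $T$; setting $D:=S\cup T$ will then produce a double-dominating set with $|D|\le|S|+(|S|-1)=2\gamatwo(G)-1$. When $A\subsetneq S$ this is immediate: $|A|\le|S|-1$, and since $G$ has no isolated vertices, each $v\in A$ has a neighbor, which must lie outside $S$ because $v\in A$; placing one such neighbor per $v\in A$ into $T$ yields the required set.

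The main step, and the only case requiring a clever move, is when $A=S$, i.e.\ when $S$ is an independent set. First, $V(G)\setminus S$ is nonempty, since otherwise every vertex of $S=V(G)$ would be isolated in $G$, contradicting our hypothesis. Moreover, the 2-dominating property of $S$ forces every $u\in V(G)\setminus S$ to satisfy $|N(u)\cap S|\ge 2$. Fix such a $u_0$ together with two distinct vertices $v_1,v_2\in N(u_0)\cap S$. The single vertex $u_0$ then takes care of both $v_1$ and $v_2$ at once, and picking an arbitrary neighbor $u_v\in V(G)\setminus S$ for each of the remaining $|S|-2$ vertices $v\in S\setminus\{v_1,v_2\}$ gives $T:=\{u_0\}\cup\{u_v:v\in S\setminus\{v_1,v_2\}\}$ with $|T|\le 1+(|S|-2)=|S|-1$, as needed. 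The main obstacle is precisely this independent-$S$ sub-case: without the doubling trick supplied by $u_0$ one could only derive the weaker bound $2\gamatwo(G)$, so it is essential to exploit the 2-dominating property one more time to save a single vertex.
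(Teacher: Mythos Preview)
Your proof is correct and follows essentially the same approach as the paper: both augment a minimum $2$-dominating set $S$ by outside neighbors, and both achieve the crucial ``$-1$'' by exploiting that some single vertex outside $S$ has two neighbors in $S$. The only difference is cosmetic: the paper does this uniformly (pick one $x\in V\setminus S$ with neighbors $y,w\in S$, add $x$, then add an arbitrary neighbor for every $z\in S\setminus\{y,w\}$, regardless of whether $z$ already has a neighbor in $S$), whereas you first isolate the set $A$ of genuinely troublesome vertices and split into cases $A=\emptyset$, $A\subsetneq S$, $A=S$, invoking the doubling trick only in the last case.
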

\begin{proof}
Let $f:V\to\{0,1\}$ be a minimum $2$-dominating function of $G$, and
let $D=\{u\,|\,f(u)=1\}$. For all $x\in V\setminus D$, we have
$f(N[x])\ge 2$, hence the condition imposed on a double dominating
function is already fulfilled for these vertices. Let $y,w\in D$
be two neighbors of $x$ with $f$-value positive. Let $D'$ be a
superset of $D$ obtained from $D$ by adding to it vertex $x$
and for each vertex $z$ from $D\setminus \{y,w\}$,
adding an arbitrary vertex $u\in N(z)$.
Clearly, for any $z\in D$, we have $|N[z]\cap D'|\ge 2$, where one
of the vertices from $D'\cap N[z]$ is itself. Altogether we derive
that $D'$ is a double dominating set, that is, the function
$f':V\to\{0,1\}$, which sets value 1 precisely to the vertices
from $D'$, is a double dominating function of $G$, and its weight
is $|D'|\le 2|D|-1= 2\gamatwo(G)-1$.
\end{proof}

\subsubsection*{Bound (8,6)}

\begin{prp}\label{prp:gamma-tx2-gamma-2}
For every graph $G$ with $\delta(G) \ge 2$, $\gamatxtwo(G)\le
3\gamatwo(G)-2$.
\end{prp}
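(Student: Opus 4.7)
The plan is to start from a minimum $2$-dominating set $D$ of $G$ (so $|D|=\gamatwo(G)$) and extend it to a total double dominating set $D'\supseteq D$ of size at most $3|D|-2$. Since every $v\in V\setminus D$ already has at least two neighbors in $D\subseteq D'$, only the vertices of $D$ need attention. Adding two arbitrary neighbors for each $v\in D$ (which exist by $\delta(G)\ge 2$) already gives the weak bound $|D'|\le 3|D|$, so the real task is to save two vertices.

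The easy case is when $G[D]$ contains an edge $yw$: for every $v\in D\setminus\{y,w\}$ add two neighbors of $v$, while for $y$ and $w$ add only one each, since they already cover each other; this saves exactly two, giving $|D'|\le 3|D|-2$. Otherwise $D$ is independent, and setting $X=V\setminus D$, each $x\in X$ has at least two neighbors in $D$ by $2$-domination. I would first dispose of two straightforward subcases: (i) if some $x\in X$ has $k\ge 3$ neighbors in $D$, add $x$ to $D'$, one further neighbor for each of these $k$ vertices, and two further neighbors for the remaining vertices of $D$, yielding $|D'|\le |D|+1+k+2(|D|-k)=3|D|+1-k\le 3|D|-2$; (ii) if every $x\in X$ has exactly two neighbors in $D$ but there exist $x_1,x_2\in X$ with disjoint pairs of neighbors in $D$, then adding both $x_1,x_2$ covers four vertices of $D$ once and a symmetric count again gives $|D'|\le 3|D|-2$.

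The main obstacle is the remaining case: every $x\in X$ has exactly two neighbors in $D$ and the family $\{N(x)\cap D:x\in X\}$ of $2$-element sets is pairwise intersecting. By the standard Helly-type observation for pairwise intersecting $2$-sets, such a family either all shares a common element $z\in D$ (``star'') or is contained in $\binom{\{z_1,z_2,z_3\}}{2}$ for some three vertices of $D$ (``triangle''). In the star subcase, every $v\in D\setminus\{z\}$ has at least two neighbors in $X$ (using $\delta(G)\ge 2$ and the independence of $D$), and by the star structure each of these neighbors is also adjacent to $z$; adding two such neighbors per $v\in D\setminus\{z\}$ produces $2(|D|-1)$ new vertices in $D'$ which automatically supply $z$ with at least two neighbors as well (using $|D|\ge 2$, a consequence of $\delta(G)\ge 2$), so $|D'|\le |D|+2(|D|-1)=3|D|-2$. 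In the triangle subcase a short argument forces $D=\{z_1,z_2,z_3\}$ (any other vertex of $D$ would have no neighbor), and partitioning $X$ into $X_{12},X_{13},X_{23}$ according to which pair $\{z_i,z_j\}$ the vertex dominates, the constraint $\deg(z_i)\ge 2$ allows a direct case check (on which of the $X_{ij}$'s are nonempty) that produces at most $4=2|D|-2$ vertices of $X$ covering every $z_i$ twice, so $|D'|\le |D|+4=3|D|-2$. Combining all cases yields $\gamatxtwo(G)\le|D'|\le 3|D|-2=3\gamatwo(G)-2$.
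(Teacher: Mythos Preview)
Your proof is correct and complete; the case analysis in the independent case (large neighborhood, two disjoint pairs, star, triangle) checks out, including the Helly-type dichotomy for pairwise intersecting $2$-sets and the verification that $|D|\ge 2$ (resp.\ $D=\{z_1,z_2,z_3\}$) in the star (resp.\ triangle) subcase.

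Your approach differs from the paper's in how the independent case is handled. When $G[D]$ has an edge, the paper's argument is essentially the same as yours (it partitions $D$ into $D_0,D_1,D_2$ according to the number of neighbors in $D$ and counts, but the savings come from the same source). When $D$ is independent, however, the paper avoids your four-way case split entirely: it passes to the bipartite subgraph $G'$ obtained by deleting all edges inside $V\setminus D$, observes that $\delta(G')\ge 2$ (by $2$-domination on one side and by $\delta(G)\ge 2$ and independence of $D$ on the other), and hence $G'$ contains an even cycle $C$ of length $2k\ge 4$. Taking $D'=D\cup V(C)$ together with two arbitrary neighbors for each vertex of $D\setminus V(C)$ gives a total double dominating set of size at most $3|D|-k\le 3|D|-2$.

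The paper's cycle argument is shorter and handles the independent case uniformly in one stroke; your argument is more hands-on and avoids invoking the existence of a cycle in a minimum-degree-$2$ graph, at the cost of several subcases. Both yield the same bound.
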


\begin{proof}
Let $f:V\to\{0,1\}$ such that for all $v\in V$ it holds that $f(v)
= 0~\Longrightarrow~f(N(v)) \ge 2$ and such that
$|D|=\gamatwo(G)$, where $D = \{v \in V \mid f(v) = 1\}$. So,
every vertex in $V \setminus D$ has at least two neighbors in $D$.
Let $D_2$ be the set of vertices in $D$ having at least two
neighbors in $D$, let $D_1$ be the set of vertices in $D$ having
exactly one neighbor in $D$, and let $D_0$ be the set of vertices
in $D$ having no neighbors in $D$. Since $\delta(G) \ge 2$, we can
always define a set $D'$ of size at most $3|D_0|+2|D_1|+|D_2|$ by
adding to $D$ one neighbor of $v$ in $V\setminus D$ for each
vertex $v$ in $D_1$, and two neighbors of $v$ in $V\setminus D$
for each vertex $v$ in $D_0$. If $D_2\neq \emptyset$, then $|D_2
\cup D_1| \geq 3$. Thus $|D'| \le 3|D_0|+2|D_1|+|D_2| \le
3(|D_0|+|D_1|+|D_2|) - |D_1| - 2|D_2| \le 3|D|-2$. If $D_2=
\emptyset$ but $D_1\neq \emptyset$, then $|D_1| \geq 2$. Again,
$|D'| \le 3|D|-2$. In both cases, we can define $f':V\to\{0,1\}$
such that, for every vertex $v\in V$, $f'(v) = 1$ if and only if
$v \in D'$. Function $f'$ is a total double dominating function,
thus $\gamatxtwo(G)\le 3\gamatwo(G)-2$.

Let now suppose $D_1=D_2=\emptyset$, so $D=D_0$ is an independent
set, and consider the bipartite graph $G'$ obtained from $G$ by
deleting all edges with both endpoints in $V\setminus D$. Since
$f$ is a $2$-dominating function, $\delta(G') \ge 2$, and $G'$
contains an even cycle $C$ of length at least four. We can
therefore define a set $D'$ by adding to $D$ the vertices of $C$
and two neighbors of $v$ in $V\setminus D$ for each vertex $v$ in
$D \setminus C$. Function $f':V\to\{0,1\}$ such that, for every
vertex $v\in V$, $f'(v) = 1$ if and only if $v \in D'$ is a total
double dominating function, and as $|D'| \le 3|D|-2$, we obtain
$\gamatxtwo(G)\le 3\gamatwo(G)-2$.
\end{proof}

\subsubsection*{Bound (8,7)}

\begin{prp}\label{prp:gamatxtwo-gamaxtwo}
For every graph $G$ with $\delta(G)\ge 2$, we have
$\gamatxtwo(G)\le 2\gamaxtwo(G)-1$.
\end{prp}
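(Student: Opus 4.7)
The plan is to start from a minimum double dominating set $D$ of $G$ with $|D|=\gamaxtwo(G)$, and enlarge it to a total double dominating set $D'\supseteq D$ satisfying $|D'|\le 2|D|-1$ by adjoining vertices from $V\setminus D$. Two simple observations guide the construction. First, every vertex $v\notin D$ already has $|N(v)\cap D|\ge 2$, since $D$ is double dominating and $v$ does not contribute to $N[v]\cap D$. Second, every vertex $v\in D$ has $|N(v)\cap D|\ge 1$, because $|N[v]\cap D|\ge 2$ and $v$ itself contributes $1$. Hence the only vertices for which the total double domination requirement might fail under $D$ are those in the set $D_1\subseteq D$ of vertices having exactly one neighbor inside $D$. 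A naive fix — pick one outside-neighbor for each $v\in D_1$, which exists since $\delta(G)\ge 2$ — gives at best $|D'|\le|D|+|D_1|\le 2|D|$, so the whole task reduces to saving a single unit.

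I would first dispose of the trivial case $D_1=\emptyset$, where $D$ itself is already total double dominating and $|D|\le 2|D|-1$ since $\gamaxtwo(G)\ge 2$. Otherwise I split into two subcases. If two distinct vertices $u,v\in D_1$ share a common neighbor $w\in V\setminus D$, I add this single $w$ (which handles both $u$ and $v$) together with one private outside-neighbor for each remaining vertex of $D_1$, obtaining $|D'|\le|D|+(|D_1|-1)\le 2|D|-1$. The remaining case is the main obstacle: no two vertices of $D_1$ share a neighbor in $V\setminus D$.

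To treat this case I would prove that $D_{\ge 2}:=D\setminus D_1$ must be nonempty. The argument is by contradiction: if $D=D_1$, pick any $v\in D$ and any $w\in N(v)\cap(V\setminus D)$ (which exists since $\delta(G)\ge 2$); then $|N(w)\cap D|\ge 2$ because $D$ is double dominating, so $w$ has a second neighbor $v'\in D=D_1$ distinct from $v$, contradicting the subcase hypothesis. Hence $|D_1|\le|D|-1$, and picking one outside-neighbor per vertex of $D_1$ yields $|D'|\le|D|+|D_1|\le 2|D|-1$. In either subcase the resulting $D'$ is indeed a total double dominating set of $G$: vertices of $V\setminus D$ and of $D_{\ge 2}$ already have two neighbors in $D\subseteq D'$, while every vertex of $D_1$ has been supplied with its missing second neighbor by construction, yielding $\gamatxtwo(G)\le|D'|\le 2\gamaxtwo(G)-1$.
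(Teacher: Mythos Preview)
Your proof is correct and follows essentially the same approach as the paper's: start from a minimum double dominating set $D$, identify the set $D_1$ of vertices with exactly one $D$-neighbor, and fix each of them by adding one outside-neighbor while saving one unit overall. The only difference is cosmetic: the paper always picks some $v\in D_1$, an outside-neighbor $w$, and a second $D$-neighbor $v'$ of $w$, then splits on whether $v'\in D_1$ (your subcase~(a)) or $v'\in D\setminus D_1$ (which forces $D_{\ge 2}\neq\emptyset$, your subcase~(b)); you instead split directly on whether two $D_1$-vertices share an outside common neighbor, which is an equivalent dichotomy leading to the same construction and the same count.
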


\begin{proof}
Let $D\subseteq V(G)$ be a minimum double dominating set of $G$.
Then, every vertex in $D$ has at least one neighbor in $D$, and
every vertex in $V(G)\setminus D$ has at least two neighbors in
$D$. Let $D_1 = \{v\in D\mid d_{G[D]}(v) = 1\}$. If $D_1 =
\emptyset$, then $D$ is also a total double dominating set, in
which case $\gamatxtwo(G)\le |D| = \gamaxtwo(G)\le
2\gamaxtwo(G)-1$.

Now let $D_1 \neq \emptyset$. Fix a vertex $v\in D_1$, and let
$w\in N(v)\setminus D$. Note that such a vertex exists since $v\in
D_1$ and $d_G(v)\ge 2$. Since $w\not\in D$, there exists a
neighbor of $w$ in $D$, say $v'$, such that $v'\neq v$. For each
vertex $x\in D_1\setminus\{v,v'\}$, let $x'$ denote an arbitrary
neighbor of $x$ outside $D$, and set $D' = D\cup \{w\}\cup X$,
where $X = \{x'\mid x\in D_1\setminus\{v,v'\}\}$.

We claim that $D'$ is a total double dominating set of $G$. To see
this, consider an arbitrary vertex $x\in V(G)$; we need to check
that $x$ has at least two neighbors in $D'$. If $x \in \{v,v'\}$,
then $|N(x)\cap D|\ge 1$ and $N(x)\cap D' \supseteq (N(x)\cap
D)\cup \{w\}$ (disjoint union). If $x \in D_1\setminus\{v,v'\}$,
then again $|N(x)\cap D|\ge 1$ and $N(x)\cap D' \supseteq
(N(x)\cap D)\cup \{x'\}$ (disjoint union). If $x \in V(G)\setminus
D_1$, then $|N(x)\cap D|\ge 2$ and $N(x)\cap D' \supseteq N(x)\cap
D$. In either case,  the conclusion follows.

The above implies that $\gamatxtwo(G)\le |D'|$, hence it suffices
to show that $|D'|\le 2|D|-1 = 2\gamaxtwo(G)-1$. If $v'\in D_1$,
then $|X|\le |D_1|-2\le |D|-2$, hence $|D'| \le |D|+1+|X|\le
2|D|-1$. If $v'\not\in D_1$, then $v'\in D\setminus D_1$, hence
$|D_1|\le |D|-1$ and again we have $|X|\le |D_1|-1\le |D|-2$, thus
the same argument applies.
\end{proof}

\subsubsection*{Bound (11,10)}

\begin{prp}\label{prp:rgamma-x2-rgamma-2}
For every graph $G$ with no isolated vertices, $\rgamaxtwo(G)\le
2\rgamatwo(G)$.
\end{prp}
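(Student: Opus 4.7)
The plan is to start with a minimum rainbow $2$-dominating function $f:V\to\{\emptyset,\{a\},\{b\}\}$ of $G$, denote $A=f^{-1}(\{a\})$, $B=f^{-1}(\{b\})$, and $V_\emptyset=f^{-1}(\emptyset)$, so that $f(V)=|A|+|B|=\rgamatwo(G)$, and to build a rainbow double dominating function $g$ of weight at most $2(|A|+|B|)$. The key structural observation is that, by the rainbow $2$-domination property, every $v\in V_\emptyset$ already has neighbors in both $A$ and $B$ under $f$; hence, as long as $g$ agrees with $f$ on $A\cup B$, the condition $g_\cup(N[v])=\{a,b\}$ is automatic for every $v\in V_\emptyset$. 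Under $f$ itself, the only vertices where the rainbow double domination condition can fail are those in $A':=\{v\in A:N(v)\cap B=\emptyset\}$ and in the symmetric set $B':=\{v\in B:N(v)\cap A=\emptyset\}$, which need, respectively, the color $\{b\}$ and the color $\{a\}$ to appear somewhere in their closed neighborhood.

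For each $v\in A'$, since $G$ has no isolated vertices, I would choose a neighbor $\beta(v)\in N(v)\subseteq V_\emptyset\cup A$, prioritizing $\beta(v)\in V_\emptyset$ whenever possible; and symmetrically choose $\alpha(v)\in V_\emptyset\cup B$ for each $v\in B'$. The function $g$ is then obtained from $f$ by assigning label $\{b\}$ to each $\beta(v)$ and label $\{a\}$ to each $\alpha(v)$, keeping the $f$-labels everywhere else. A relabeling on a $V_\emptyset$-vertex adds $1$ to the weight, while a ``swap'' within $A$ (replacing $\{a\}$ by $\{b\}$) or within $B$ does not change the total weight. Hence $g(V)\le |A|+|B|+|A'|+|B'|\le 2(|A|+|B|)=2\rgamatwo(G)$, as desired.

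The hard part will be verifying that the resulting $g$ satisfies $g_\cup(N[v])=\{a,b\}$ for every $v\in V$. The delicate points are: (i)~when $\beta(v)\in A$, the override to $\{b\}$ must still leave $\beta(v)$'s own closed neighborhood with an $\{a\}$-label, which is guaranteed by $v\in N(\beta(v))\cap A$ remaining $\{a\}$-labeled, provided $v$ is not itself overridden; this can be arranged by a careful orientation or matching argument on the subgraph of $A'$-vertices lacking $V_\emptyset$-neighbors; (ii)~``clashes'' in which the same $V_\emptyset$-vertex would receive both $\{a\}$ and $\{b\}$, which must be resolved by local rerouting, typically possible thanks to the no-isolated-vertex hypothesis, or, if necessary, by absorbing an extra unit into the weight bound; and (iii)~the possibility that overriding a vertex in $A$ deprives an adjacent $V_\emptyset$-vertex of its $\{a\}$-access, which is controlled by restricting overrides to $A$-vertices without $V_\emptyset$-neighbors (so that no $V_\emptyset$-vertex is adjacent to an overridden $A$-vertex). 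Working through these cases carefully is the main technical content of the proof.
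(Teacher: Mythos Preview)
Your overall strategy matches the paper's: start from a minimum rainbow $2$-dominating function, note that every $V_\emptyset$-vertex is already fine, and then repair the ``bad'' vertices in $A$ and $B$ at a cost of at most one extra label each. However, the issues you flag in (i)--(iii) are real, and your proposal does not resolve them; the paper's proof turns precisely on how these are handled.

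Two concrete points. First, clashes in (ii) are not a technicality that can be absorbed into the bound. Take $K_{1,n}$ with an optimal $f$ that labels the center $\emptyset$ and each leaf with $\{a\}$ or $\{b\}$: then $A'=A$, $B'=B$, and every $\beta(v)$ and every $\alpha(v)$ is forced to be the center $c$, so your rule assigns $c$ both colors. There is no rerouting available (leaves have degree one), and whichever single color you give $c$, all leaves of the opposite color remain unsatisfied. Second, in (i) you relabel a neighbor $\beta(v)\in A$ rather than $v$ itself; but $\beta(v)$ may well have $V_\emptyset$-neighbors even though $v$ does not, so your proposed restriction ``override only $A$-vertices without $V_\emptyset$-neighbors'' is not automatically compatible with your choice of $\beta$.

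The paper avoids all three issues with two simple devices you should adopt. It processes the $A$-side and the $B$-side \emph{sequentially}, updating the sets as it goes: once a $V_\emptyset$-vertex is relabeled $\{b\}$ while fixing $A'$, it is no longer in $V_\emptyset$ when the $B$-side is handled, so no clash ever arises, and one checks that the set of bad $B$-vertices never grows during the $A$-phase (and vice versa). And when a bad vertex $v\in A'$ has no $V_\emptyset$-neighbor, the paper relabels \emph{$v$ itself} to $\{b\}$ rather than a neighbor of $v$. Since such a $v$ has all neighbors in $A$, this swap keeps $v$ satisfied (it now sees $\{a\}$ in its neighborhood and $\{b\}$ on itself), cannot hurt any $V_\emptyset$-vertex, and only helps other $A$-vertices. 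With these two changes the argument goes through cleanly and yields the desired weight bound.
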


\begin{sloppypar}
\begin{proof}
Let $f:V\to\{\emptyset,\{a\},\{b\}\}$ such that for all $v\in V$
it holds that $f(v) = \emptyset~\Longrightarrow~|f_\cup(N(v))| \ge
2$ and such that the total weight $f(V)=\rgamatwo(G)$. Let $D_x =
\{v \in V \mid f(v) = \{x\}\}$, for $x \in \{a,b\}$, and $D_0 =
\{v \in V \mid f(v) = \emptyset\}$. Notice that
$\rgamatwo(G)=|D_a|+|D_b|$. Also, every vertex $v$ in $D_0$ has a
neighbor in $D_a$ and a neighbor in $D_b$, in particular it
satisfies $|f_\cup(N[v])| \ge 2$.

Vertices $v$ of $D_a$ having neighbors in $D_b$ and vertices $v$
of $D_b$ having neighbors in $D_a$ also satisfy $|f_\cup(N[v])|
\ge 2$.

We will do the following process, successively, for all the
vertices in $D_a$ having neighbors only in $D_a \cup D_0$. Let $v$
be such a vertex. If $v$ has a neighbor $w$ in $D_0$, then we
update $f(w) := \{b\}$, and update accordingly the sets $D_b$ and
$D_0$. Now, $|f_\cup(N[v])| \ge 2$, and still $|f_\cup(N[z])| \ge
2$ for every $z$ satisfying that before the update. If $v$ has no
neighbor in $D_0$, then it has at least one neighbor in $D_a$. We
update $f(v) := \{b\}$, and update accordingly the sets $D_a$ and
$D_b$. Again, $|f_\cup(N[v])| \ge 2$, and still $|f_\cup(N[z])|
\ge 2$ for every $z$ satisfying that before the update, because
$v$ had no neighbors in $D_b \cup D_0$. If, with the new
definitions of $D_a, D_b, D_0$, there are still vertices in $D_a$
having neighbors only in $D_a \cup D_0$, we repeat the process.
Notice that the size of this set strictly decreases on each step,
while the size of vertices in $D_b$ having neighbors only in $D_b
\cup D_0$ never increases.

Once the set of vertices in $D_a$ having neighbors only in $D_a
\cup D_0$ is empty, we start processing vertices in $D_b$ having
neighbors only in $D_b \cup D_0$ analogously, interchanging the
roles of $a$ and $b$. Notice that the size of the set of vertices
in $D_b$ having neighbors only in $D_b \cup D_0$ strictly
decreases on each step, while we never create vertices in $D_a$
having neighbors only in $D_a \cup D_0$. So, once the former set
is empty, all the vertices $z$ in $V$ satisfy $|f_\cup(N[z])| \ge
2$, hence $f$ is a rainbow double dominating function of $G$.
Notice that we have done at most $|D_a|+|D_b|$ steps and on each
step we give a nonempty label to at most one vertex in $D_0$. So
the new weight $f(V)$ is at most $2\rgamatwo(G)$.
\end{proof}
\end{sloppypar}

\subsubsection*{Bound (1,11)}

\begin{prp}\label{prp:gamma-rgamma-x2}
For every graph $G$ without isolated vertices, $\gama(G)\le
\frac{1}{2}\rgamaxtwo(G)$.
\end{prp}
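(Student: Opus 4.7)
The plan is to exploit Proposition \ref{prp:disjoint-domination}, which identifies $\rgamaxtwo(G)$ with the disjoint domination number $\gamma\gamma(G)$, i.e., the minimum value of $|A|+|B|$ over all pairs $(A,B)$ of disjoint dominating sets of $G$. Equivalently, we work directly with a minimum rainbow double dominating function $f:V(G)\to\{\emptyset,\{a\},\{b\}\}$ and look at its color classes.

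First I would set $A = f^{-1}(\{a\})$ and $B = f^{-1}(\{b\})$. The rainbow double domination condition $|f_\cup(N[v])| \ge 2$ for every $v\in V(G)$ forces $f_\cup(N[v]) = \{a,b\}$, so each vertex $v$ has, in its closed neighborhood, both a vertex labeled $\{a\}$ and a vertex labeled $\{b\}$. This immediately shows that both $A$ and $B$ are dominating sets of $G$.

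Next, since $\gamma(G)$ is by definition the minimum cardinality of a dominating set, we have $\gamma(G)\le |A|$ and $\gamma(G)\le |B|$, hence
\[
2\gamma(G) \le |A|+|B| = f(V(G)) = \rgamaxtwo(G),
\]
which yields $\gamma(G) \le \tfrac{1}{2}\rgamaxtwo(G)$ after dividing by $2$.

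There is no real obstacle here: the argument is essentially a one-line pigeonhole applied to the two color classes of a rainbow double dominating function. The only thing to be careful about is the hypothesis that $G$ has no isolated vertices, which is exactly what guarantees that $\rgamaxtwo(G)$ is finite (by the discussion preceding Proposition \ref{prp:disjoint-domination}), so that the inequality is meaningful.
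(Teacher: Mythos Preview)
Your proof is correct and follows essentially the same approach as the paper: take a minimum rainbow double dominating function, observe that each color class is a dominating set, and bound $\gamma(G)$ by the smaller one (the paper uses WLOG $|A|\le |B|$ and bounds by $|A|$, while you add the two inequalities $\gamma(G)\le |A|$ and $\gamma(G)\le |B|$, which amounts to the same thing).
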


\begin{proof}
Let $G$ be a graph without isolated vertices, and let
$f:V(G)\to\{\emptyset,\{a\},\{b\}\}$ be a minimum weight rainbow
double dominating function of $G$. Consider the sets $A = \{v\in
V(G)\mid f(v) = \{a\}\}$ and $B = \{v\in V(G)\mid f(v) = \{b\}\}$.
Without loss of generality, we may assume that $|A|\le |B|$. Note
that every vertex in $V(G)\setminus A$ has a neighbor in $A$, thus $A$ is a
dominating set of $G$, implying $\gama(G)\le |A|\le
\frac{1}{2}f(V(G)) = \frac{1}{2}\rgamaxtwo(G)$.
\end{proof}

\subsubsection*{Bound (2,12)}

\begin{prp}\label{prp:gamma-t-rgamma-tx2}
For every graph $G$ of total domatic number at least $2$,
$\gamat(G)\le \frac{1}{2}\rgamatxtwo(G)$.
\end{prp}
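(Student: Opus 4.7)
The plan is to exploit Proposition~\ref{prp:rgamatxtwo}, which identifies $\rgamatxtwo(G)$ with the disjoint total domination number $\gamma_t\gamma_t(G)$, i.e., the minimum of $|A|+|B|$ over all pairs $(A,B)$ of disjoint total dominating sets of $G$. Since the hypothesis is exactly that $G$ has total domatic number at least $2$, such a pair exists, so $\rgamatxtwo(G)$ is finite.

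First I would take a minimum rainbow total double dominating function $f:V(G)\to\{\emptyset,\{a\},\{b\}\}$ and set $A=f^{-1}(\{a\})$ and $B=f^{-1}(\{b\})$. The defining condition $f_\cup(N(v))=\{a,b\}$ for every $v\in V(G)$ says precisely that every vertex has a neighbor in $A$ and a neighbor in $B$; hence both $A$ and $B$ are total dominating sets of $G$. In particular $\gamat(G)\le |A|$ and $\gamat(G)\le |B|$, so
\[
2\gamat(G)\le |A|+|B|=f(V(G))=\rgamatxtwo(G),
\]
which is the desired inequality. Equivalently, one argues that without loss of generality $|A|\le |B|$, whence $\gamat(G)\le |A|\le\tfrac{1}{2}(|A|+|B|)$.

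There is essentially no obstacle here: the bulk of the work has already been done in Proposition~\ref{prp:rgamatxtwo}, and the remainder is just the observation that the two colour classes of an optimal rainbow total double dominating function are each total dominating sets. The only thing to remark on is that the hypothesis ``total domatic number at least~$2$'' is exactly what guarantees that $\rgamatxtwo(G)$ is finite, so both sides of the inequality are well defined.
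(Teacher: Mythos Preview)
Your proof is correct and follows essentially the same approach as the paper: take a minimum rainbow total double dominating function, observe that each colour class $A=f^{-1}(\{a\})$ and $B=f^{-1}(\{b\})$ is a total dominating set, and conclude via $\gamat(G)\le\min\{|A|,|B|\}\le\tfrac{1}{2}(|A|+|B|)=\tfrac{1}{2}\rgamatxtwo(G)$. The reference to Proposition~\ref{prp:rgamatxtwo} is not strictly needed for the argument itself (you do not actually use the identification with $\gamma_t\gamma_t$), but it correctly explains why the hypothesis guarantees finiteness.
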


\begin{proof}
Let $G$ be a graph with total domatic number at least~$2$, and let
$f:V(G)\to\{\emptyset,\{a\},\{b\}\}$ be a minimum weight rainbow
total double dominating function of $G$. Consider the sets $A =
\{v\in V(G)\mid f(v) = \{a\}\}$ and $B = \{v\in V(G)\mid f(v) =
\{b\}\}$. Without loss of generality, we may assume that $|A|\le
|B|$. Note that every vertex in $G$ has a neighbor in $A$. In
particular, $A$ is a total dominating set of $G$, implying
$\gamat(G)\le |A|\le \frac{1}{2}f(V(G)) =\frac{1}{2}\rgamatxtwo.$
\end{proof}

\subsubsection*{Bound (1,13)}

\begin{prp}\label{prp:gamma-gammaR}
For every graph $G$ with at least one edge, $\gama(G)\le
\gamaR(G)-1$.
\end{prp}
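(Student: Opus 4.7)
The plan is to start with a minimum Roman dominating function $f:V(G)\to\{0,1,2\}$ and extract a dominating set from it, exploiting the fact that vertices of positive $f$-value always form a dominating set. As usual, I will partition $V(G)$ into $V_i = \{v : f(v)=i\}$ for $i\in\{0,1,2\}$ and observe that $D := V_1\cup V_2$ is a dominating set of $G$: every vertex of $V_0$ has, by the Roman condition, a neighbor in $V_2\subseteq D$, and every vertex of $D$ dominates itself. This immediately gives $\gamma(G)\le |D| = |V_1|+|V_2|$, which, compared with $\gamma_R(G) = |V_1| + 2|V_2|$, yields $\gamma(G) \le \gamma_R(G) - |V_2|$.

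The argument then splits into two cases depending on whether $V_2$ is empty. The easy case is $V_2\neq\emptyset$, where the inequality above directly gives $\gamma(G)\le \gamma_R(G)-1$. The remaining case is $V_2 = \emptyset$: since every vertex $v$ with $f(v)=0$ would need a neighbor in $V_2$, the Roman condition forces $V_0=\emptyset$ as well, so $V_1 = V(G)$ and $\gamma_R(G) = |V(G)| = n$.

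To close this second case I use the hypothesis that $G$ has at least one edge. Pick any edge $uv\in E(G)$; then $V(G)\setminus\{u\}$ is a dominating set, because $u$ is dominated by $v$ and every other vertex dominates itself. Hence $\gamma(G)\le n-1 = \gamma_R(G)-1$, completing the proof.

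There is no real obstacle here: the only subtlety is remembering that the case $V_2=\emptyset$ forces $V_1=V(G)$ via the Roman condition, which is what makes the bound on $n$ available and allows the ``$-1$'' to be recovered from the presence of an edge. The assumption that $G$ has at least one edge is exactly what the statement requires, and is clearly necessary: for the edgeless graph $\overline{K_n}$ one has $\gamma(G) = \gamma_R(G) = n$.
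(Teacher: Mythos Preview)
Your proof is correct and follows essentially the same approach as the paper: both extract the dominating set $V_1\cup V_2$ from a minimum Roman dominating function and use $|V_1|+|V_2|\le \gamma_R(G)-|V_2|$. The only cosmetic difference is in handling the case $V_2=\emptyset$: the paper modifies $f$ on an edge $uv$ with $f(u)=f(v)=1$ (setting $f(u)=2$, $f(v)=0$) to reduce to the case $V_2\neq\emptyset$, whereas you argue directly that $V(G)\setminus\{u\}$ is dominating---both arguments use the edge hypothesis in the same way and yield the same conclusion.
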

\begin{proof}
Let $f:V(G)\to\{0,1,2\}$ be a minimum weight Roman dominating
function of $G$, and let $V_i = \{v\in V(G)\mid f(v) = i\}\,$ for
$i\in \{0,1,2\}$. We may assume that in addition $f$ is chosen
in such a way that $V_2\neq \emptyset$. Indeed let $uv\in E(G)$,
and assume that $f(u)=1=f(v)$. But then changing $f(u)=1$ to $f(u)=2$
and $f(v)=1$ to $f(v)=0$ yields a Roman dominating function of
the same weight. Note that $V_1\cup V_2$ is a dominating set,
which implies $\gamma(G)\le |V_1|+|V_2|< |V_1|+2|V_2|=\gamaR(G)$,
hence $\gama(G)\le \gamaR(G)-1$.
\end{proof}

\section{Examples of sharpness and non-existence}
\label{sec:sharp}

\subsection{Main families of graphs used in the proofs}

In this section we present several families of graphs
that will be used in the proofs of sharpness of the bounds
from Table~\ref{table-bounds} or in the proofs of non-existence of such
bounds indicated in the same table. Here we present
the families that are used in several instances, while
those that are used just once or twice will be presented
along with Table~\ref{table-sharp}, which summarizes the families
used for each proof.

Given a graph $G$, by $kG$ we define the disjoint union of $k$ copies
of $G$. Hence $kK_2$, resp. $kC_4$ stands for the graphs on $k$ components,
each component being the connected graph on $2$ vertices, resp.~the {\em square}, that is, the cycle on four vertices}.

Next, for a graph $G$, $S(G)$ denotes its subdivision graph, obtained
from $G$ by subdividing each of its edges exactly once.
In $S(G)$ we distinguish between {\em original} and {\em subdivided} vertices
that correspond to the vertices of $G$ and to those internal vertices of the paths on three vertices replacing the edges of $G$
to obtain $S(G)$ respectively.
The stars with $n$ leaves are denoted by $K_{1,n}$. We denote by $S(K_{1,n})^-$ the graph obtained from the subdivision graph
$S(K_{1,n})$ of the $n$-star by deleting a leaf.

The graph $H$ is the tree on 6 vertices, in which each of the two adjacent non-leaves is adjacent to two leaves.

We denote by $F_n^3$ the graph on $2n+1$ vertices, obtained from $nK_3$ by identifying (gluing) one vertex
of each triangle to a single vertex. Similarly, $F_n^4$ is the graph on $3n+1$ vertices, obtained from $nC_4$
by identifying (gluing) one vertex of each square to a single vertex. See Fig.~\ref{fig:Fn}.

\begin{figure}[h!]
\begin{center}
\includegraphics{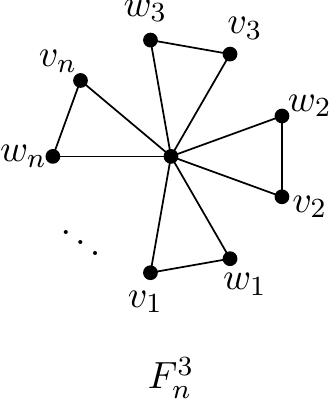}\qquad
\includegraphics{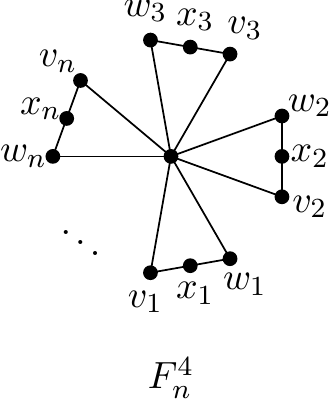}
\end{center}
\caption{Graphs $F_n^3$ and $F_n^4$}\label{fig:Fn}
\end{figure}

For $n\ge 2$, let $K_n^{**}$ denote the graph obtained from the complete graph of order $n$ by gluing two
new triangles along each edge; see Fig.~\ref{fig:Kn**andSKn2} for an example.
In other words, for each pair $x,y$ of vertices in the complete graph $K_n$ two vertices are added, each of which is adjacent
only to $x$ and $y$. The added vertices in $K_n^{**}$ (that is, those of degree $2$) will be called {\em triangle} vertices; vertices that are not triangle will be called {\em original}.

\begin{figure}[h!]
\begin{center}
\includegraphics[width=80mm]{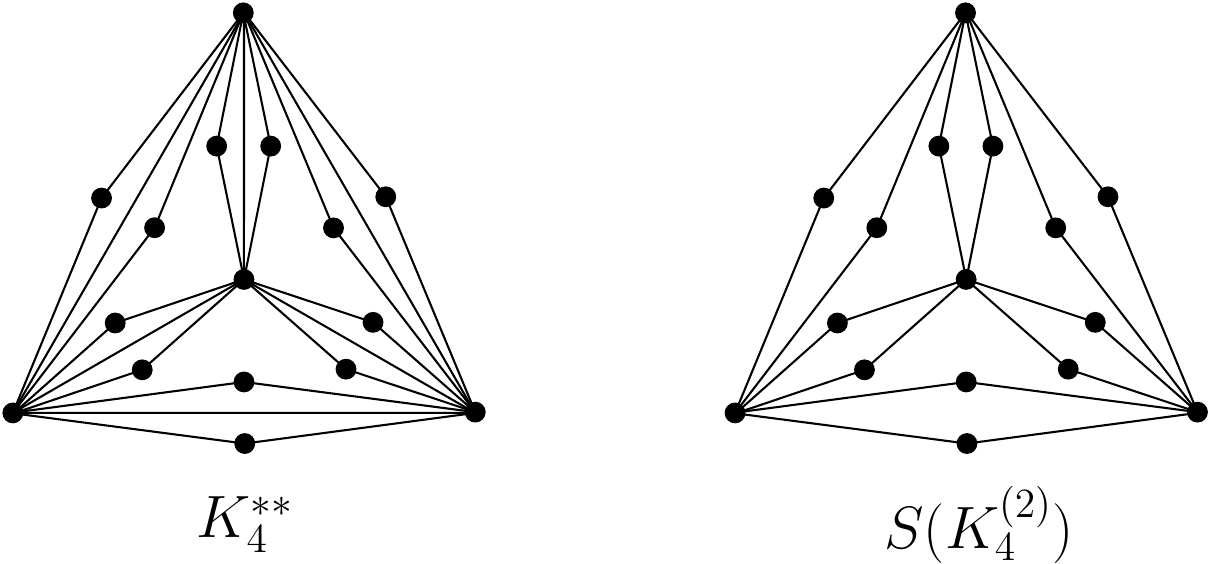}
\end{center}
\caption{Examples of graphs $K_n^{**}$ and $S(K_n^{(2)})$}\label{fig:Kn**andSKn2}
\end{figure}

By $G^{(k)}$ we denote the multigraph obtained from a graph $G$ by replacing each edge with $k$ parallel edges.
In particular, $K_n^{(2)}$ is the multigraph obtained from the complete graph of order $n$ by duplicating each edge,
and so $S(K_n^{(2)})$ is its subdivision graph; see Fig.~\ref{fig:Kn**andSKn2} for an example. Note that $S(K_n^{(2)})$ is obtainable from $K_n^{**}$ by deleting all edges joining pairs of original vertices.
Next, the graph $S(K_3^{(n)})$ is the subdivision graph of the multigraph
$K_3^{(n)}$ (i.e., the subdivision graph of the multigraph obtained from $K_3$ by adding $n-1$ parallel edges
between each pair of vertices). See Fig.~\ref{fig:SK3nQn} for an example.

\begin{figure}[h!]
\begin{center}
\includegraphics[width=80mm]{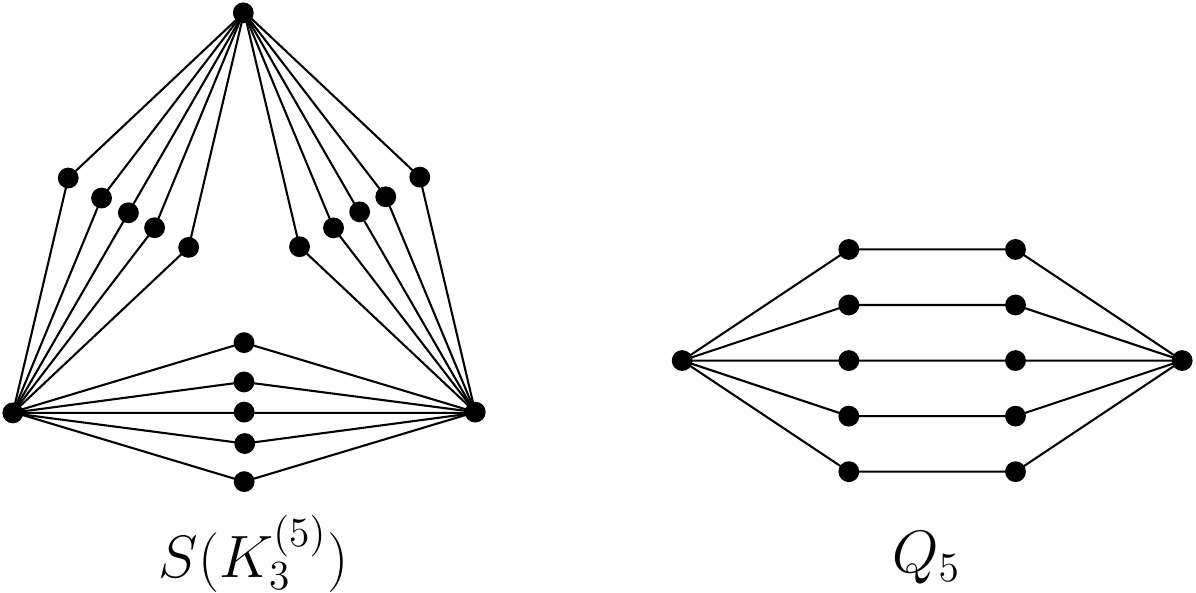}
\end{center}
\caption{Examples of graphs $S(K_3^{(n)})$ and $Q_n$}\label{fig:SK3nQn}
\end{figure}

We will denote by $Q_n$ the graph that can obtained from the multigraph $K_2^{(n)}$ (two vertices, connected by $n$ parallel edges)
by subdividing each edge twice, i.e., each edge is replaced by the path $P_4$ (the so-called double subdivision graphs of $K_2^{(n)}$).
See Fig.~\ref{fig:SK3nQn} for an example.

\begin{sloppypar}
Finally, graphs $T_n$ are defined as follows. Let $V(T_n)=\{v_1, \dots, v_n,w_1, \dots, w_n,$
$s_1, s_2, s_3, t_1, \dots, t_5\}$, so that
$s_1,s_2,s_3$ induce a triangle, $t_1, \dots, t_5$ induce a $C_5$,
$s_1$ and $s_3$ are adjacent to $v_i$ for every $i\in\{1,\ldots,n\}$,
$t_1$ and $t_5$ are adjacent to $w_i$ for every $i\in\{1,\ldots,n\}$,
$v_iw_i \in E(T_n)$ for every $i\in\{1,\ldots,n\}$, and there are no other
edges; see Fig.~\ref{fig:Tn}.
\end{sloppypar}

\begin{figure}[h!]
\begin{center}
\includegraphics[width=0.39\linewidth]{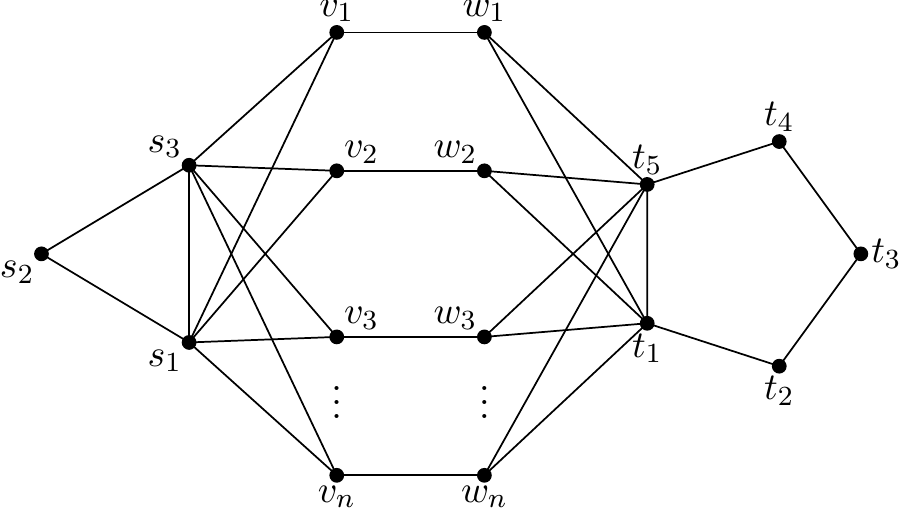}
\end{center}
\caption{Graphs $T_n$}\label{fig:Tn}
\end{figure}

It is for instance easy to see that for each $n$, the set $\{s_1,t_1,t_3\}$ is a minimum
dominating set of $T_n$. We will discuss the values of several other parameters of $T_n$ and the consequences
for Table~\ref{table-families} in Subsection~\ref{ss:unbound}.

The sharpness of the bounds in Table~\ref{table-bounds} will be demonstrated using the values of relevant parameters
on graphs families summarized in rows A--G of Table~\ref{table-families}.
In addition, we will use three more families, not described in this table (each of them used only for three invariants).
Furthermore, the results for families in rows X, Y, Z, W will be used in proving that there is no bound between certain
pairs of invariants.

\newcommand{\vacio}{}

\begin{sidewaystable}
\centering { \small
\renewcommand{\arraystretch}{1.4}
\tabcolsep=0.125cm
\begin{tabular}{|c|c||c|c|c|c|c|c|c|c|c|c|c|c|c|c|}
  \hline
\multicolumn{2}{|c||}{\multirow{2}{*}{$H$ vs. $\rho(H)$ }} & 1 & 2 & 3 & 4 & 5 & 6 & 7 & 8 & 9 & 10 & 11 & 12 & 13\\
\hhline{~~-------------}
\multicolumn{2}{|c||}{} &  $\gamma$ & $\gamat$ & $\gamawtwo$ & $\gamasettwo$ & $\gamatsettwo$ & $\gamatwo$ & $\gamaxtwo$ & $\gamatxtwo$   & $\rgamawtwo$ & $\rgamatwo$ & $\rgamaxtwo$ & $\rgamatxtwo$ &  $\gamaR$   \\
\hline\hline
  % after \\: \hline or \cline{col1-col2} \cline{col3-col4} ...

  % after \\: \hline or \cline{col1-col2} \cline{col3-col4} ...

% after \\: \hline or \cline{col1-col2} \cline{col3-col4} ...
  A & $kK_2$
 & $k$ & $2k$ & $2k$   & $2k$ & $4k$ & $2k$ &
 $2k$ & $\vacio$ &
  $2k$ & $2k$ & $2k$ & $\vacio$ & $2k$  \\
\hline

  % after \\: \hline or \cline{col1-col2} \cline{col3-col4} ...
  B & $kC_4$
 & $2k$ & $2k$ & $2k$ & $\vacio$ &  $4k$  & $2k$ & $\vacio$ & $4k$
  & $2k$ & $2k$ & $4k$ & $4k$ & $3k$\\
\hline

  % after \\: \hline or \cline{col1-col2} \cline{col3-col4} ...
  C & $K_n^{**},n\ge 3$

&  $n-1$ & $n-1$ & $n$ & $n$ & $n$ & $n$ & $n$ & $n$
  & $2n-2$ & $\vacio$ & $\vacio$ & $\vacio$ & $2n-2$  \\

\hline

  % after \\: \hline or \cline{col1-col2} \cline{col3-col4} ...
  %E & $S(K_{1,n})^-$

% &  $n$ & $\vacio$ & $\vacio$ & $2n$ & $\vacio$ & $\vacio$ &
 %  $\vacio$ & $\vacio$
 % & $\vacio$ &  $\vacio$ & $\vacio$ & $\vacio$ & $n+1$ \\
% \hline

  % after \\: \hline or \cline{col1-col2} \cline{col3-col4} ...
  D & $kH$

 & $\vacio$ & $2k$ & $4k$ & $4k$ & $4k$ & $\vacio$ & $\vacio$  & $\vacio$
  & $4k$ & $\vacio$   & $\vacio$ & $\vacio$ & $4k$ \\
\hline

 % after \\: \hline or \cline{col1-col2} \cline{col3-col4} ...
  E & $kK_{4,4}$

 & $\vacio$ & $\vacio$ & $4k$ & $4k$ & $4k$ & $4k$ & $4k$  & $4k$
  & $4k$ & $4k$   & $4k$ & $4k$ & $4k$ \\
\hline

% after \\: \hline or \cline{col1-col2} \cline{col3-col4} ...
  F & $F_n^3$
 & $\vacio$ & $\vacio$ &$ \vacio$ & $\vacio$
  & $\vacio$ & $\vacio$ & $n+1$ &    $2n+1$
  & $\vacio$ &  $\vacio$  & $n+1$ & $\vacio$ & $\vacio$\\
 \hline

% after \\: \hline or \cline{col1-col2} \cline{col3-col4} ...
  G & $F_n^4$
 & $\vacio$ & $\vacio$ & $n+1$ & $2n+1$
  & $\vacio$ & $n+1$ & $2n+1$ &    $3n+1$
  & $n+1$ &  $n+1$  & $\vacio$ & $\vacio$ & $\vacio$\\
 \hline
\hline

  % after \\: \hline or \cline{col1-col2} \cline{col3-col4} ...
  X & $K_{1,n}$,$n\ge 2$
 & $1$ & $2$ & $2$ & $2$
  & $4$ & $n$ & $n+1$ &    $\vacio$
  & $2$ &  $n$  & $n+1$ & $\vacio$ & $2$\\
 \hline

Y & $S(K_3^{(n)})$, $n\ge 3$ & $\vacio$ & $\vacio$  & $\vacio$  & $\vacio$ &
$\vacio$ & $3$ & $5$ & $6$   & $\vacio$ &
   $n+3$  & $n+4$ & $\vacio$ & $\vacio$ \\
\hline

Z & $Q_n$, $n\ge 3$ & $2$ & $4$  & $4$  & $4$ &
$8$ & $\vacio$ & $\vacio$ & $2n+2$   & $4$ &
   $\vacio$  & $\vacio$ & $\vacio$ & $4$ \\
\hline

W & $T_n$ & $3$ & $5$  & $5$  & $6$ &
$8$ & $5$ & $6$ & $8$   & $6$ &
   $6$  & $6$ & $n+9$ & $6$ \\
\hline

\end{tabular}
}\caption{Summary of proofs for the entries of Table
\ref{table-sharp}. Only the entries that are used in the proof of Table
\ref{table-sharp} are non-empty. Each nonempty entry of the table
gives the value of the domination parameter indexing the corresponding column on
the graph indexing the row. The entries show that the bounds are sharp, or that
a certain parameter cannot be bounded from above by a function of another one.}
\label{table-families}
\end{sidewaystable}

Since most of the values in Table~\ref{table-families} can be proved as an easy exercise, we
will only present here a proof for those that are a bit more involved.

\begin{claim}
For $n\ge 3$, we have
$\rho(K_n^{**})= n$ for every
$\rho\in \{\gamawtwo,\gamasettwo,\gamatsettwo,\gamatwo,\gamaxtwo,\gamatxtwo\}$, and
$\rgamawtwo(K_n^{**}) = \gamaR(K_n^{**})=2n-2$.
\end{claim}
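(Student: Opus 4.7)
The plan is to exploit the explicit structure of $K_n^{**}$: $n$ ``original'' vertices inducing $K_n$, plus, for every edge $\{x_i,x_j\}$ of this $K_n$, two ``triangle'' vertices of degree $2$ whose only neighbors are $x_i$ and $x_j$. For the six integer parameters equaling $n$, my first step is an explicit construction for the upper bound: the characteristic function of the set $V_0$ of original vertices is a total double dominating function of weight $n$, because every original vertex has $n-1\ge 2$ original neighbors and every triangle vertex has exactly two original neighbors. Thus $\gamatxtwo(K_n^{**})\le n$, and the Hasse diagram in Fig.~\ref{fig:Hasse} then gives $\rho(K_n^{**})\le n$ for every $\rho\in\{\gamawtwo,\gamasettwo,\gamatsettwo,\gamatwo,\gamaxtwo\}$ as well. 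Since $\gamawtwo$ is the smallest of the six by the same Hasse diagram, it will suffice to prove $\gamawtwo(K_n^{**})\ge n$.

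For the lower bound on $\gamawtwo$ my plan is a per-edge counting argument. Given a weak $2$-dominating function $f$, for each edge $e=\{x_i,x_j\}$ of $K_n$ with triangle vertices $t_1^e,t_2^e$, either both triangle vertices carry positive weight, or at least one of them has value $0$ and the defining condition forces $f(x_i)+f(x_j)\ge 2$. Partition $E(K_n)$ into $E_A$ (first case) and $E_B$ (second case). The triangle contribution to $f(V)$ is then at least $2|E_A|$; summing the inequality $f(x_i)+f(x_j)\ge 2$ over $E_B$ and using that the maximum degree of the subgraph $(V_0,E_B)$ is at most $n-1$ yields $\sum_{x\in V_0}f(x)\ge 2|E_B|/(n-1)$. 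Combining these with $|E_A|+|E_B|=\binom{n}{2}$ gives
\[
f(V)\ge\frac{2|E_B|}{n-1}+2|E_A|=n+\frac{2(n-2)|E_A|}{n-1}\ge n,
\]
which is the desired bound.

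For $\gamaR(K_n^{**})=2n-2$, the upper bound is realized by assigning weight $2$ to any $n-1$ originals: the one remaining original vertex has a weight-$2$ original neighbor, and every triangle vertex lies on an edge at least one of whose endpoints has weight $2$. The bound $\rgamawtwo(K_n^{**})\le\gamaR(K_n^{**})\le 2n-2$ then follows from the Hasse inequality $\rgamawtwo\le\gamaR$. For the matching lower bound on $\gamaR$, let $f$ be Roman dominating and let $k$ be the number of originals of weight $2$; for each of the $\binom{n-k}{2}$ edges of $K_n$ with both endpoints of weight less than $2$, both triangle vertices on this edge have only those two neighbors, so they must have value $\ge 1$. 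Hence $f(V)\ge 2k+2\binom{n-k}{2}$, and a short integer minimization over $k\in\{0,\dots,n\}$ shows that this lower bound attains its minimum value $2n-2$ at $k\in\{n-2,n-1\}$.

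The plan for $\rgamawtwo(K_n^{**})\ge 2n-2$ is analogous: for a rainbow weak $2$-dominating $f$ let $S_a=\{x\in V_0:a\in f(x)\}$, $S_b=\{x\in V_0:b\in f(x)\}$, with $\alpha=|S_a|$ and $\beta=|S_b|$. Any edge $e=\{x_i,x_j\}$ of $K_n$ with $f(x_i)\cup f(x_j)\ne\{a,b\}$ forces both triangle vertices on $e$ to have non-empty label, since an empty-label triangle vertex would need the union over its neighborhood to equal $\{a,b\}$. Because every edge of $K_n$ disjoint from $S_a$, and likewise every edge disjoint from $S_b$, is of this kind, the number of such edges is at least $\binom{n-\min(\alpha,\beta)}{2}$, giving $f(V)\ge\alpha+\beta+2\binom{n-\min(\alpha,\beta)}{2}$. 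Minimizing over integer values of $\min(\alpha,\beta)\ge 1$ again produces $2n-2$, while the degenerate case $\min(\alpha,\beta)=0$ forces \emph{all} $\binom{n}{2}$ edges to be of this type, so $f(V)\ge n(n-1)\ge 2n-2$ for $n\ge 3$. The main obstacle is the dualization-via-maximum-degree trick underlying the $\gamawtwo$ lower bound; once it is in place, the Roman and rainbow arguments follow the same ``cover by weighted originals or pay via triangle vertices'' trade-off, only with the ``or'' becoming an all-or-nothing dichotomy.
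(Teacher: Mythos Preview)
Your proof is correct, but the route differs from the paper's in how the lower bounds are obtained. The paper proves only \emph{one} lower bound directly, namely $\rgamawtwo(K_n^{**})\ge 2n-2$: it first normalizes a minimum rainbow weak $2$-dominating function so that every triangle vertex carries label $\emptyset$ (by shifting weight onto an original neighbor whenever a twin pair of triangle vertices is non-empty), and then observes that at most one original can carry label $\{a\}$ and at most one $\{b\}$, forcing total weight at least $2n-2$. All remaining lower bounds are then read off from Table~\ref{table-bounds}: the inequality $\rgamawtwo\le 2\gamawtwo-2$ yields $\gamawtwo(K_n^{**})\ge n$ (and hence $\rho(K_n^{**})\ge n$ for all six parameters via the Hasse diagram), while $\rgamawtwo\le\gamaR\le 2\gamatxtwo-2$ squeezes $\gamaR(K_n^{**})$ to $2n-2$.

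Your approach instead establishes three lower bounds independently via direct per-edge trade-off counting, with no normalization step: the double-counting bound $\sum_{x\in V_0}f(x)\ge 2|E_B|/(n-1)$ for $\gamawtwo$, and the quadratic minimization of $2k+2\binom{n-k}{2}$ for $\gamaR$ and $\rgamawtwo$. This is more self-contained (it does not invoke the general comparison table for the lower-bound direction) and the counting arguments are clean; the paper's approach is more economical and showcases how the table of mutual bounds can be leveraged to avoid repeated work.
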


\begin{proof}
Recall that $K_n^{**}$ is the graph obtained from the complete graph of order $n$
by gluing two triangles along each edge, and let us denote $K_n^{**}$ shortly by $G_n$.
Let $V_n$ be the set of original vertices in $K_n^{**}$ (the vertices of
the complete subgraph $K_n$), and $V_n^{**}$ the set $V(G_n)\setminus V_n$
(triangle vertices).
On the one hand, it is easy to see that $\gamatxtwo(G_n)\le n$, since assigning $1$ to
each vertex of $V_n$ and $0$ to each other vertex yields a total double dominating function of $G_n$ of
total weight $n$. On the other hand, we will now show that
$\rgamawtwo(G_n)\ge 2n-2$. Since the bounds from Table~\ref{table-bounds}
imply that $\rgamawtwo(G_n)\le \gamaR(G_n)\le 2\gamatxtwo(G_n)-2$
and $\rho(G_n)\le \gamatxtwo(G_n)$ for all
$\rho \in \{\gamawtwo,\gamasettwo,\gamatsettwo,\gamatwo,\gamaxtwo\}$, the claim will then follow.

Suppose for a contradiction that there is a minimum rainbow weak $2$-dominating function
$f:V(G_n)\to{\cal P}(\{a,b\})$ of $G_n$ of total weight at most
$2n-3$. We first argue that we may assume without loss of
generality that $f(v) = \emptyset$ for all vertices $v\in
V_n^{**}$. Indeed, if $f(v) \neq \emptyset$ for some vertex $v\in
V_n^{**}$, then the minimality of $f$ implies that
 $f(v') \neq \emptyset$, where $v'$ denotes the unique vertex
 with $v'\neq v$ and $N(v) = N(v')$.
Hence, assigning $\{a,b\}$ to one of the neighbors of $v$ and
assigning  $\emptyset$ to each of $v$ and $v'$ maintains
feasibility without increasing the total weight. Performing the
above procedure as long as necessary eventually results in a
function $f$ such that $f(v) = \emptyset$ for all vertices $v\in
V_n^{**}$. Let $V_a = \{v\in V_n\mid f(v) = \{a\}\}$; the sets
$V_b$ and $V_{ab}$ are defined similarly. Since $2|V_{ab}|\le
f(V(G_n))\le 2n-3$, we infer that $|V_{ab}|\le n-2$. The fact that
$f$ is a rainbow weak $2$-dominating function of $G_n$ with $f(v)
= \emptyset$ for each $v\in V_n^{**}$, implies that $|V_a|\le 1$
and $|V_b|\le 1$. If either $V_a=\emptyset$ or $V_b=\emptyset$,
then there exists a vertex $v\in V_n^{**}$ such that $|f(N(v))|\leq 1$, contrary to the fact that
$f$ is a rainbow weak $2$-dominating function and $f(v) = \emptyset$.
Consequently, $|V_a| = |V_b| = 1$, which implies $|V_{ab}| = n-2$. But now,
$f(V(G_n))= 2n-2$, a contradiction.
\end{proof}

As most other values in rows A-G in Table~\ref{table-families} are straightforward
(in particular the values for $kK_2,kC_4,kH,kK_{4,4},F_n^3$),
we continue with the class $F_n^4$; these are the graphs obtained from a set of $n$ cycles $C_4$
by identifying a vertex from each of the 4-cycles to a single vertex.
Let us denote by $v$ the unique vertex of degree $2n$ in $F_n^4$. First, note that
$\rgamawtwo(F_n^4)\le \rgamatwo(F_n^4)\le n+1$, which is proven by
the inequality $\rgamawtwo(F_n^4)\le \rgamatwo(F_n^4)$ (see Table \ref{table-bounds}) and the function
$f:V(F_n^4)\to {\cal P}(\{a,b\})$, which assigns $f(v)=\{a\}$, $f(u)=\{b\}$ to all non-neighbors
$u$ of $v$, and $f(x)=\emptyset$  to the remaining vertices.
The values $\gamawtwo(F_n^4)=\gamatwo(F_n^4)=\rgamawtwo(F_n^4)=\rgamatwo(F_n^4)=n+1$
can be derived from the following result and the corresponding upper bounds in row $4$ in Table~\ref{table-bounds}.

\begin{claim}
\label{cl:F4}
For $n\ge 3$, $\gamasettwo(F_n^4)=2n+1$.
\end{claim}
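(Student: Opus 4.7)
The plan is to prove $\gamasettwo(F_n^4) = 2n+1$ by establishing matching upper and lower bounds. Label the vertices of $F_n^4$ so that the central vertex is $v$, and each of the $n$ petals contributes a $4$-cycle $v\,a_i\,b_i\,c_i\,v$, with $a_i,c_i$ adjacent to $v$ and $b_i$ adjacent only to $a_i$ and $c_i$.

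For the upper bound, I would exhibit the explicit function $f(v)=1$, $f(a_i)=f(c_i)=1$, and $f(b_i)=0$ for every $i$. A direct check gives $f(N[v])=1+2n\ge 2$, $f(N[a_i])=f(N[c_i])=f(v)+f(a_i)+f(b_i)\ge 2$, and $f(N[b_i])=f(a_i)+f(c_i)=2$, so $f$ is a $\{2\}$-dominating function of weight $1+2n=2n+1$.

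For the lower bound, let $f$ be any $\{2\}$-dominating function of $F_n^4$, and set $s_i=f(a_i)+f(b_i)+f(c_i)$. The constraint $f(N[b_i])\ge 2$ forces $s_i\ge 2$ for every $i$, so $\sum_i s_i\ge 2n$, and the total weight is $f(v)+\sum_i s_i$. If $f(v)\ge 1$, the bound $f(V)\ge 2n+1$ is immediate. The only delicate case is $f(v)=0$: here the constraint at $v$ becomes $\sum_i(f(a_i)+f(c_i))\ge 2$, and I need to rule out the possibility that $\sum_i s_i=2n$.

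The crux is to argue that equality $s_i=2$ in every petal forces $f(b_i)=2$ and $f(a_i)=f(c_i)=0$ for all $i$: indeed, adding the two constraints $f(N[a_i])\ge 2$ and $f(N[c_i])\ge 2$ (under $f(v)=0$) yields $f(a_i)+f(c_i)+2f(b_i)\ge 4$, which combined with $s_i=2$ gives $f(b_i)\ge 2$, and hence $f(a_i)=f(c_i)=0$. But then $\sum_i(f(a_i)+f(c_i))=0$, contradicting $f(N[v])\ge 2$. Hence at least one petal must satisfy $s_i\ge 3$, yielding $\sum_i s_i\ge 2n+1$ and completing the argument. The main (minor) obstacle is this final case analysis, which is essentially a pigeonhole on how the weight can be distributed within a single petal.
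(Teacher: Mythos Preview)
Your proof is correct and follows essentially the same approach as the paper's. The lower bound arguments coincide: both observe that the closed neighborhoods $N[b_i]$ are pairwise disjoint and avoid $v$, forcing $f(v)=0$ and $s_i=2$ for all $i$ in the extremal case, and then deduce $f(b_i)=2$, $f(a_i)=f(c_i)=0$, contradicting the constraint at $v$; your write-up is simply more explicit in the algebra. The only difference is in the upper bound: you exhibit a concrete $\{2\}$-dominating function of weight $2n+1$, whereas the paper invokes the general inequality $\gamasettwo(G)\le 2\gamawtwo(G)-1$ together with $\gamawtwo(F_n^4)\le n+1$; both routes are immediate.
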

\begin{proof}
Let $S$ denote the set of all vertices at distance two from the central vertex $v$ of $F_n^4$.
Let us denote $F_n^4$ shortly by $G_n$.
Let us show that $\gamasettwo(G_n)\ge 2n+1$. Indeed, suppose for a contradiction that there exists
a $\{2\}$-dominating function $f:V(G_n)\to\{0,1,2\}$ of $G_n$ of total weight at most $2n$.
Since all vertices in $S$ have pairwise disjoint closed neighborhoods,
each of them needs weight $2$ to dominate vertices in $S$.
Since $v$ is not contained in any closed neighborhood of a vertex from $S$, we deduce
$f(v) = 0$; moreover, $f(N[u]) = 2$ for each vertex $u\in S$.
In order to dominate the neighbors of $v$,
we must have $f(u) = 2$ for all $u\in S$. But this implies that $f(N[v]) = 0$, a contradiction.
This shows that $\gamasettwo(G_n)\ge 2n+1$.
Since $\gamasettwo(G)\le 2\gamawtwo(G)-1\le 2(n+1)-1$, we derive that the claim is correct.
\end{proof}

Note that Claim~\ref{cl:F4} also implies that $\gamaxtwo(F_n^4)=2n+1$.
To see that $\gamatxtwo(F_n^4)\ge 3n+1$
one needs only to observe that for each vertex $u$ with degree 2
a total double dominating function $f$ of $G$ must assign $1$ to each
of the neighbors of $u$. On the other hand, assigning $1$ to all vertices
yields a total double dominating function of $F_n^4$, thus $\gamatxtwo(F_n^4)=3n+1$.

Some of the (not straightforward) values in rows X, Y, Z, W will be proven in Subsection~\ref{ss:unbound}
along with the proofs of unboundedness relations.

\begin{sidewaystable}
\centering { \small
\renewcommand{\arraystretch}{1.4}
\tabcolsep=0.125cm
\begin{tabular}{|c|c||c|c|c|c|c|c|c|c|c|c|c|c|c|c|}
  \hline
\multicolumn{2}{|c||}{\multirow{2}{*}{$\rho\le f(\rho')$}} & 1 & 2 & 3 & 4 & 5 & 6 & 7 & 8 & 9 & 10 & 11 & 12 & 13\\
\hhline{~~-------------}
\multicolumn{2}{|c||}{} &  $\gamma$ & $\gamat$ & $\gamawtwo$ & $\gamasettwo$ & $\gamatsettwo$ & $\gamatwo$ & $\gamaxtwo$ & $\gamatxtwo$   & $\rgamawtwo$ & $\rgamatwo$ & $\rgamaxtwo$ & $\rgamatxtwo$ &  $\gamaR$   \\
\hline\hline
  % after \\: \hline or \cline{col1-col2} \cline{col3-col4} ...

  1 & $\gama$
& $=$ & $kC_4$  & $kC_4$ & $K_n^{**},n\ge 3$ & $K_n^{**},n\ge 3$ & $kC_4$ &
$K_n^{**},n\ge 3$ & $K_n^{**},n\ge 3$
  & $kC_4$ & $kC_4$ & $kK_2$ & $kC_4$ & $[S(K_{1,n})^-]$ \\
\hline

  % after \\: \hline or \cline{col1-col2} \cline{col3-col4} ...
  2 & $\gamat$

& $kK_2$ & $=$ & $S(K_{2n+1})$ & $kK_2$ & $K_n^{**}, n\ge 3$ &
$S(K_{2n+1})$  & $kK_2$ & $K_n^{**},n\ge 3$
  & $kK_2$ & $kK_2$ & $kK_2$ & $kC_4$ & $kK_2$   \\
  \hline

  % after \\: \hline or \cline{col1-col2} \cline{col3-col4} ...
  3 & $\gamawtwo$

&  $kK_2$ & $kH$ & $=$ & $kK_2$ & $kH$ & $kK_2$ & $kK_2$ &
$K_n^{**},n\ge 3$
  & $kK_2$ & $kK_2$ & $kK_2$ & $kK_{4,4}$ & $kK_2$  \\

\hline

  % after \\: \hline or \cline{col1-col2} \cline{col3-col4} ...
  4 & $\gamasettwo$
 & $kK_2$ & $kH$ & [$F_n^4]$   & $=$ & $K_n^{**},n\ge 3$ & $[F_n^4$] &
 $kK_2$ & $K_n^{**},n\ge 3$ &
  [$F_n^4$] & [$F_n^4$] & $kK_2$ & $kK_{4,4}$ & $[S(K_{1,n})^-]$  \\
\hline

  % after \\: \hline or \cline{col1-col2} \cline{col3-col4} ...
  5 & $\gamatsettwo$
 & $kK_2$ & $kK_2$ & $kK_2$ & $kK_2$ &  $=$  & $kK_2$ & $kK_2$ & $K_n^{**},n\ge 3$
  & $kK_2$ & $kK_2$ & $kK_2$ & $kC_4$ & $kK_2$\\
\hline

  % after \\: \hline or \cline{col1-col2} \cline{col3-col4} ...
  6 & $\gamatwo$

 &  $\boldsymbol{K_{1,n}}$ & $\boldsymbol{K_{1,n}}$ & $\boldsymbol{K_{1,n}}$ & $\boldsymbol{K_{1,n}}$ & $\boldsymbol{K_{1,n}}$ & $=$ &
   $kK_2$ & $K_n^{**},n\ge 3$
  & $\boldsymbol{K_{1,n}}$ &  $kK_2$ & $kK_2$ & $kK_{4,4}$ & $\boldsymbol{K_{1,n}}$ \\
\hline

  % after \\: \hline or \cline{col1-col2} \cline{col3-col4} ...
  7 & $\gamaxtwo$

 & $\boldsymbol{K_{1,n}}$ & $\boldsymbol{K_{1,n}}$ & $\boldsymbol{K_{1,n}}$ & $\boldsymbol{K_{1,n}}$ & $\boldsymbol{K_{1,n}}$ & $F_n^4$ & $=$  & $K_n^{**},n\ge 3$
  & $\boldsymbol{K_{1,n}}$ & $F_n^4$   & $kK_2$ & $kK_{4,4}$ & $\boldsymbol{K_{1,n}}$ \\
\hline

  % after \\: \hline or \cline{col1-col2} \cline{col3-col4} ...
  8 & $\gamatxtwo$
 & $\boldsymbol{Q_n}$ & $\boldsymbol{Q_n}$ & $\boldsymbol{Q_n}$ & $\boldsymbol{Q_n}$
  & $\boldsymbol{Q_n}$ & $F_n^4$ & $F_n^3$ &    $=$
  & $\boldsymbol{Q_n}$ &  $F_n^4$  & $F_n^3$ & $kC_4$ & $\boldsymbol{Q_n}$\\

 \hline
9 & $\rgamawtwo$ & $kK_2$ & $kH$  & $K_n^{**},n\ge 3$  &
$K_n^{**},n\ge 3$ & $K_n^{**},n\ge 3$ & $K_n^{**},n\ge 3$ &
$K_n^{**},n\ge 3$ & $K_n^{**},n\ge 3$ & $=$ &
   $kK_2$  & $kK_2$ & $kK_{4,4}$ & $kK_2$ \\
\hline

  10 & $\rgamatwo$
 &  $\boldsymbol{K_{1,n}}$ & $\boldsymbol{K_{1,n}}$ & $\boldsymbol{K_{1,n}}$ & $\boldsymbol{K_{1,n}}$ & $\boldsymbol{K_{1,n}}$& $\boldsymbol{S(K_3^{(n)})}$ &
 $\boldsymbol{S(K_3^{(n)})}$   & $\boldsymbol{S(K_3^{(n)})}$  & $\boldsymbol{K_{1,n}}$&
 $=$  & $kK_2$ & $kK_{4,4}$ & $\boldsymbol{K_{1,n}}$ \\

\hline
  11 & $\rgamaxtwo$
 &  $\boldsymbol{K_{1,n}}$ & $\boldsymbol{K_{1,n}}$ & $\boldsymbol{K_{1,n}}$ & $\boldsymbol{K_{1,n}}$ & $\boldsymbol{K_{1,n}}$ & $\boldsymbol{S(K_3^{(n)})}$ & $\boldsymbol{S(K_3^{(n)})}$   & $\boldsymbol{S(K_3^{(n)})}$  & $\boldsymbol{K_{1,n}}$ & $kC_4$ & $=$   & $kC_4$ & $\boldsymbol{K_{1,n}}$ \\

\hline
  12 & $\rgamatxtwo$
  & $\boldsymbol{T_n}$ & $\boldsymbol{T_n}$ & $\boldsymbol{T_n}$ & $\boldsymbol{T_n}$  & $\boldsymbol{T_n}$
  & $\boldsymbol{T_n}$ &  $\boldsymbol{T_n}$ & $\boldsymbol{T_n}$
  & $\boldsymbol{T_n}$  &  $\boldsymbol{T_n}$ & $\boldsymbol{T_n}$ & $=$    & $\boldsymbol{T_n}$ \\

\hline
  % after \\: \hline or \cline{col1-col2} \cline{col3-col4} ...
  13 & $\gamaR$
  & $kK_2$ & $kH$ & $S(K_n^{(2)})$ & $K_n^{**},n\ge 3$ & $K_n^{**},n\ge 3$ & $S(K_n^{(2)})$
   & $K_n^{**},n\ge 3$ & $K_n^{**},n\ge 3$ & $kC_4$ & $kC_4$ & $kK_2$ & $kK_{4,4}$ & $=$ \\
  \hline
\end{tabular}
} \caption{Summary of the families of graphs that either achieve
the bounds from Table~\ref{table-bounds}, or demonstrate that
there is no function bounding one parameter with another (the
latter families are bolded). Families in a bracket $[\ldots]$
demonstrate the sharpness of the bounds for graphs with at least
one edge. } \label{table-sharp}
\end{sidewaystable}

\subsection{Sharpness of the bounds}

The families of graphs used to prove sharpness of the bounds in
Table \ref{table-bounds} are summarized in Table
\ref{table-sharp}. Most of the required values for families in Table~\ref{table-sharp}
have already been established in Table~\ref{table-families}.
In fact, in Table~\ref{table-sharp}
there are only three graph families whose values have not yet been determined and are used to show the sharpness of bounds
(note that the families marked in bold letters in Table~\ref{table-sharp} are used to show
the non-existence of a function that would bound one parameter with another one).

We start with the family $S(K_{1,n})^-$, that appears in the entries (1,13) and (4,13)
of the table. Note that the corresponding bounds are $\gama(G)\le \gamaR(G)-1$ and
$\gamasettwo(G)\le 2\gamaR(G)-2$ for an arbitrary graph $G$ with edges.
Recall that the graphs $S(K_{1,n})^-$ are obtained from the subdivision
graph of the star $K_{1,n}$ by deleting a leaf. It is easy to see that $\gamma(S(K_{1,n})^-)=n$ and $\gamasettwo(S(K_{1,n})^-)=2n$
for $n\ge 3$, the main argument being that the closed neighborhoods of the $n$ vertices of degree $1$
are pairwise disjoint. To prove the sharpness of the two bounds it remains to prove the following.

\begin{claim}
For $n\ge 3$, $\gamaR(S(K_{1,n})^-)=n+1$.
\end{claim}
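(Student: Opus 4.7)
Let me denote the central vertex of the star by $c$, the subdivision vertices by $s_1,\ldots,s_n$ (each $s_i$ adjacent to $c$), and the remaining leaves by $\ell_1,\ldots,\ell_{n-1}$ (where $\ell_i$ is adjacent only to $s_i$); the vertex $s_n$ is a pendant of $c$ after the deletion.

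For the upper bound $\gamaR(S(K_{1,n})^-)\le n+1$, I would exhibit the Roman dominating function $f$ with $f(c)=2$, $f(\ell_i)=1$ for $i=1,\ldots,n-1$, and $f(v)=0$ elsewhere. The total weight is $2+(n-1)=n+1$, and feasibility is immediate: every $s_i$ has $c$ as a neighbor of weight $2$, so $s_i$ is dominated; each $\ell_i$ has positive weight; and $c$ has positive weight.

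For the matching lower bound I would split on the value of $f(c)$ for an arbitrary Roman dominating function $f$.
\emph{Case 1: $f(c)=2$.} For each $i\in\{1,\ldots,n-1\}$, the leaf $\ell_i$ has $s_i$ as its unique neighbor, so either $f(\ell_i)\ge 1$ or $f(s_i)=2$; in either case $f(s_i)+f(\ell_i)\ge 1$. Summing yields $f(V)\ge 2+(n-1)=n+1$.
\emph{Case 2: $f(c)\in\{0,1\}$.} The point is that $c$ can no longer ``save'' the legs. For $i\le n-1$, one checks that the two constraints ``$\ell_i$ is dominated'' and ``$s_i$ is dominated'' together force $f(s_i)+f(\ell_i)\ge 2$: a quick enumeration of the possibilities $f(s_i),f(\ell_i)\in\{0,1,2\}$ shows that the only feasible pairs with sum $\le 1$ would require $f(c)=2$. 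Moreover, the pendant $s_n$ (whose only neighbor is $c$) cannot be dominated by weight $2$ at $c$, so $f(s_n)\ge 1$ if $f(c)\ne 2$. Thus $f(V)\ge f(c)+f(s_n)+\sum_{i=1}^{n-1}(f(s_i)+f(\ell_i))\ge 0+1+2(n-1)=2n-1$, which is at least $n+1$ for $n\ge 2$.

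Combining both cases gives $\gamaR(S(K_{1,n})^-)\ge n+1$, matching the upper bound. The step that deserves a little care is the contribution bound $f(s_i)+f(\ell_i)\ge 2$ in Case~2: this is the only place where the Roman domination condition is used in a slightly nontrivial way, as it relies on the fact that neither endpoint of the edge $s_i\ell_i$ can be ``covered'' by a weight-$2$ vertex lying outside the pair once $f(c)\ne 2$. Everything else is routine bookkeeping.
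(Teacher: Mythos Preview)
Your proof is correct. The upper bound construction is identical to the paper's. For the lower bound, however, the paper takes a shorter route: it simply invokes the already-established facts that $\gama(S(K_{1,n})^-)=n$ and $\gama(G)\le \gamaR(G)-1$ for any graph $G$ with an edge (Proposition~\ref{prp:gamma-gammaR}), yielding $\gamaR(S(K_{1,n})^-)\ge n+1$ immediately. Your direct case analysis on $f(c)$ is a bit longer but entirely self-contained, not relying on any external inequality between parameters; the paper's argument is cleaner in context because the needed ingredients are already on the table.
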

\begin{proof}
Note that the function $f:V(G)\to \{0,1,2\}$ that assigns $2$ to the unique
vertex $v$ of degree $n$, assigns $1$ to all vertices at distance
$2$ from $v$, and assigns $0$ to all the remaining vertices of
$S(K_{1,n})^-$, is a Roman dominating function of the graph.
$\gamaR(K_{1,n})^-)\le n+1$. On the other hand, since $\gamma(S(K_{1,n})^-)=n$,
and since $\gamma(G)\le \gamaR(G)-1$ for any graph $G$ with an edge (see Table~\ref{table-bounds}),
we infer the claimed result.
\end{proof}

We continue with the subdivision graph $S(K_{2n+1})$
of the complete graph of odd order $2n+1$, which appears in the
entries (2,3) and (2,6). Note that the bounds from the table show
that $\gamat(G)\le\frac{3\gamawtwo(G)-1}{2} \le \frac{3\gamatwo(G)-1}{2}$ for
any graph $G$.

\begin{claim}
For $n\ge 2$, $\gamawtwo(S(K_{2n+1}))=\gamatwo(S(K_{2n+1})) =2n+1$ and $\gamat(S(K_{2n+1}))= 3n+1$.
\end{claim}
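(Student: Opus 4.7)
My plan is to prove $\gamat(S(K_{2n+1}))=3n+1$ directly and then to deduce the equalities $\gamawtwo(S(K_{2n+1}))=\gamatwo(S(K_{2n+1}))=2n+1$ from bounds already established in Table~\ref{table-bounds}. Throughout, let $O$ denote the set of $2n+1$ original vertices of $S(K_{2n+1})$ and $S$ the set of $\binom{2n+1}{2}$ subdivided vertices; each subdivided vertex $v_e$ corresponds to an edge $e$ of $K_{2n+1}$ and has its two neighbors in $O$, namely the endpoints of $e$.

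For the upper bound on $\gamat$, I would fix an arbitrary original vertex $u^*$ and take $D=(O\setminus\{u^*\})\cup C$, where $C$ is a minimum edge cover of $K_{2n+1}$ viewed as a set of subdivided vertices in $S(K_{2n+1})$. Since $K_{2n+1}$ has edge cover number $n+1$, this gives $|D|=3n+1$; a routine case check, using that $C$ covers every vertex of $K_{2n+1}$ and that $O\setminus\{u^*\}$ intersects every edge of $K_{2n+1}$, confirms that $D$ is a total dominating set. The matching lower bound rests on the main structural observation of the proof: for any total dominating set $D$ of $S(K_{2n+1})$, the set $D\cap O$ must be a vertex cover of $K_{2n+1}$ (since each subdivided vertex $v_e$ has to be totally dominated and its only neighbors are the endpoints of $e$), while $D\cap S$, interpreted as a set of edges of $K_{2n+1}$, must be an edge cover of $K_{2n+1}$ (since each original vertex has to be totally dominated and all its neighbors are subdivided). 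Because $K_{2n+1}$ has vertex cover number $2n$ and edge cover number $n+1$, this forces $|D|\geq 2n+(n+1)=3n+1$.

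To handle the other two parameters, I first note that the function taking value $1$ on every original vertex and $0$ elsewhere is a $2$-dominating function of $S(K_{2n+1})$, since every subdivided vertex has exactly two neighbors and both of them lie in $O$; hence $\gamatwo(S(K_{2n+1}))\leq 2n+1$. Combining this with the chain $\gamawtwo\leq\gamatwo$ (from the Hasse diagram in Fig.~\ref{fig:Hasse}) and the bound $\gamat\leq(3\gamawtwo-1)/2$ from entry $(2,3)$ of Table~\ref{table-bounds}, I obtain $2n+1=(2\gamat+1)/3\leq\gamawtwo\leq\gamatwo\leq 2n+1$, and equality holds throughout.

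The main obstacle is recognizing the vertex-cover/edge-cover decomposition that governs total dominating sets in $S(K_{2n+1})$; once this structural observation is in place, the remainder of the argument collapses since the relevant covering numbers of the complete graph $K_{2n+1}$ are classical, and the other two equalities are obtained for free from the already-proved inequalities of Table~\ref{table-bounds} rather than from a separate, more intricate case analysis on the weights of a weak $2$-dominating function.
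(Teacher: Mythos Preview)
Your proof is correct and follows the same overall strategy as the paper: establish $\gamatwo\le 2n+1$ via the set of original vertices, prove $\gamat\ge 3n+1$ directly, and then squeeze all three equalities out of the inequality $\gamat\le (3\gamawtwo-1)/2$ from entry~$(2,3)$ of Table~\ref{table-bounds}. The only differences are cosmetic. You phrase the lower bound on $\gamat$ cleanly in terms of vertex covers and edge covers of $K_{2n+1}$; the paper reaches the same conclusion by a short contradiction argument (if $|D|\le 3n$ then either $|D\cap O|\le 2n-1$, leaving some subdivided vertex undominated, or $|D\cap S|\le n$, leaving some original vertex undominated), which is exactly your observation unpacked. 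You also give an explicit $(3n+1)$-vertex total dominating set, which is harmless but redundant: the paper dispenses with this, since the upper bound $\gamat\le 3n+1$ already follows from $\gamat\le (3\gamawtwo-1)/2\le (3(2n+1)-1)/2 = 3n+1$.
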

\begin{proof}
Note that the vertex set of $S(K_{2n+1})$ is given by $V\cup {V\choose 2}$, where $V =V(K_{2n+1})$
and ${V\choose 2}$ are the vertices added in the subdivision of $K_{2n+1}$.
We denote by $x_{uv}$ to the vertex added in the subdivision of the edge $uv$. Clearly $S(K_{2n+1})$ is bipartite with bipartition
$\big\{V,{V\choose 2}\big\}$. On the one hand, $V$ is a $2$-dominating set of $S(K_{2n+1})$,
showing that $\gamawtwo(S(K_{2n+1}))\le \gamatwo(S(K_{2n+1}))\le 2n+1$. On the other hand, we
claim that $\gamat(S(K_{2n+1}))\ge 3n+1$. Indeed, suppose to the contrary
that $D$ is a total dominating set of $S(K_{2n+1})$ with at most $3n$
vertices. Then either $|D\cap V|\le 2n-1$ or $|D\cap {V\choose
2}|\le n$. In the former case, there exists a pair $u,v\in V$ of
distinct vertices not in $D$, and therefore $N_{S(K_{2n+1})}(x_{uv})\cap D =
\{u,v\}\cap D = \emptyset$. In the latter case, vertices of $D\cap
{V\choose 2}$ dominate at most $2n$ vertices in $V$, hence there
exists a non-dominated vertex in $V$. In either case, we obtain a
contradiction.
Therefore,
$$3n+1\le \gamat(S(K_{2n+1}))\le \frac{3\gamawtwo(S(K_{2n+1}))-1}{2}\le \frac{3(2n+1)-1}{2} = 3n+1$$
and equalities hold throughout.
\end{proof}

Finally, the only remaining entries in Table~\ref{table-sharp}
that demonstrate sharpness of the bounds and
do not follow from entries in Table~\ref{table-families}
are (13,3) and (13,6).
The bounds show that $\gamaR(G)\le 2\gamawtwo(G)-1\le 2\gamatwo(G)-1$
for any graph $G$. The sharpness is demonstrated by the family of
graphs $G_n = S(K_n^{(2)})$, which are the subdivision graphs of the
multigraphs obtained from the complete graphs by duplicating each
edge (see Fig.~\ref{fig:Kn**andSKn2} for an example).
On the one hand, the (weak) $2$-domination number of $G_n$ is
at most $n$, since assigning weight $1$ to each original vertex of
$K_n^{(2)}$ and weight $0$ to all other vertices results in a (weak)
$2$-dominating function of $G_n$ of weight $n$.
On the other hand, we will now show that the Roman domination
number of $G_n$ is at least $2n-1$.

\begin{claim}
For $n\ge 3$, $\gamaR(S(K_n^{(2)}))=2n-1$.
\end{claim}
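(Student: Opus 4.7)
The plan is to prove both bounds, the upper one by an explicit construction and the lower one by a weight-counting argument on pairs of original vertices.

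For the upper bound $\gamma_R(S(K_n^{(2)})) \le 2n-1$, I would exhibit a Roman dominating function: assign weight $2$ to $n-1$ of the original vertices of $K_n^{(2)}$, weight $1$ to the remaining original vertex, and weight $0$ to every subdivision vertex. Every subdivision vertex $x_{uv}$ has exactly two neighbors (the original vertices $u, v$), at least one of which is assigned weight $2$, so the Roman domination condition holds. The total weight is $2(n-1)+1 = 2n-1$.

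For the lower bound $\gamma_R(S(K_n^{(2)})) \ge 2n-1$, let $f$ be any Roman dominating function and partition the set $O$ of original vertices into $A = f^{-1}(2)\cap O$, $B=f^{-1}(1)\cap O$, $C=f^{-1}(0)\cap O$, with cardinalities $a,b,c$. The crucial structural observation is that for each pair $\{u,v\}\subseteq B\cup C$, neither of the two subdivision vertices in $S_{uv}$ can have $f$-value $0$: such a vertex has only $u$ and $v$ as neighbors, and neither belongs to $A$. Hence each such pair contributes at least $2$ to the subdivision weight, giving
\[
f(V) \;\ge\; 2a+b+2\binom{n-a}{2} \;=\; 2n+(n-a)(n-a-3)+b.
\]

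Setting $t=n-a$, I would split into cases. For $t=0$ or $t\ge 3$ the inequality $(n-a)(n-a-3)\ge 0$ immediately yields $f(V)\ge 2n\ge 2n-1$. The delicate cases are $t\in\{1,2\}$, where the bound above can dip to $2n-2$, and one must invoke the additional constraint that each vertex $u\in C$ requires some subdivision neighbor $x$ with $f(x)=2$. For $t=1$ with $c=1$, the required weight-$2$ subdivision vertex lies in a pair involving an $A$-vertex (not counted in $2\binom{t}{2}$) and adds $2$, pushing the total to $\ge 2n$. For $t=2$ with $c=2$, either a single subdivision vertex in $S_{u_1u_2}$ with $f=2$ dominates both vertices of $C$, forcing the two subdivision vertices in $S_{u_1u_2}$ to contribute at least $3$ (total $\ge 2(n-2)+3 = 2n-1$), or two distinct weight-$2$ subdivision vertices are needed, and the total becomes $\ge 2n+1$. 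The remaining sub-cases $b\ge 1$ for $t\le 2$ are handled by noting that $b$ adds directly to the bound.

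The main obstacle is the bookkeeping in the tight cases $t\in\{1,2\}$: one must verify carefully that when $C\ne\emptyset$, covering each vertex of $C$ by a subdivision neighbor with weight $2$ always costs enough extra to reach the $2n-1$ threshold, and simultaneously identify the configurations in which equality is achieved (namely the ones realizing the upper bound). Once these sub-cases are exhausted, combining both bounds yields $\gamma_R(S(K_n^{(2)}))=2n-1$.
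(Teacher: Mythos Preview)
Your proof is correct, but it takes a different route from the paper's.

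The paper does not do a weight-counting case analysis. Instead, among all minimum Roman dominating functions it picks one, $f$, that additionally minimizes the total weight on subdivided vertices. It then argues by an exchange argument (splitting into the cases $f(s)=2$ and $f(s)=1$, and using the twin subdivided vertex $s'$) that such an $f$ must assign $0$ to \emph{every} subdivided vertex. Once this canonical form is established, the conclusion is immediate: for every pair of original vertices $t,t'$ at least one of $f(t),f(t')$ equals $2$, so at most one original vertex has weight below $2$, and that one must have weight $1$; hence $f(V)\ge 2n-1$.

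Your approach avoids the exchange/canonicalization step and instead bounds $f(V)$ directly via $2a+b+2\binom{n-a}{2}=2n+t(t-3)+b$, then handles the tight cases $t\in\{1,2\}$ by invoking the domination requirement for vertices in $C$. This is more elementary in the sense that it works for an arbitrary Roman dominating function rather than a specially chosen one, at the cost of a longer case analysis. The paper's method buys a cleaner endgame and also identifies the structure of optimal solutions (all weight sits on original vertices), whereas your method gets the numerical bound with less setup but more bookkeeping.
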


\begin{proof}
Among all minimum weight Roman
dominating functions of $G_n$, choose one, say $f$, that minimizes
the value of $S_f$, defined as the sum of $f$-weights of all
subdivided vertices. First, we will show that $f(s) = 0$ for every
subdivided vertex $s$. Indeed, suppose for a contradiction that
$f(s) > 0$ where $s$ is a subdivided vertex of maximum $f$-weight.
Let $s'$ be the twin of $s$, that is, the vertex $s'\neq s$ such
that $N_{G_n}(s') = N_{G_n}(s)$, and let $t$ and $t'$ be the two
(common) neighbors of $s$ and $s'$ in $G_n$. We consider two
cases:
\begin{itemize}
  \item {\it Case 1: $f(s) = 2$.}

Then, $f(s')\le 1$, since otherwise a Roman dominating function
with smaller weight than $f$ could be obtained, by setting $f(s')
= 1$.

If both neighbors of $s$ have $f$-weight $0$, then we could obtain
a Roman dominating function $g$ of the same weight as $f$ and such
that $S_g<S_f$ by setting $g(s) = g(s') = 0$, $g(t) = 1$, $g(t') =
2$, and $g(u) = f(u)$ for all other vertices $u\in V(G_n)$. This
contradicts the choice of $f$.

If both neighbors of $s$ have positive $f$-weight, then a Roman
dominating function with smaller weight than $f$ could be obtained
by setting $f(s) = 1$, so this case is also impossible.

Hence, we may assume that $f(t) = 0$ and $f(t') \in \{1,2\}$. In
this case, we could obtain a Roman dominating function $g$ of the
same weight as $f$ and such that $S_g<S_f$ by setting $g(s) = 0$,
$g(t) = 2$, and $g(u) = f(u)$ for all other vertices $u\in
V(G_n)$.

  \item {\it Case 2: $f(s) = 1$.}

On the one hand, by the choice of $s$ we have
$f(s')\in\{0,1\}$. On the other hand, by the minimality of the total weight of $f$, $f(s') = 1$, for
otherwise $f(t) = 2$ or $f(t') = 2$, and so $f(s)$ could be set to
$0$ without violating the constraints of Roman domination. A Roman dominating function $g$ of weight at most that of $f$ and
such that $S_g<S_f$ can be obtained by setting $g(s) = g(s') = 0$,
$g(t) = 2$, and $g(u) = f(u)$ for all other vertices $u\in
V(G_n)$. This contradicts the choice of $f$.
\end{itemize}

Since $f(s) = 0$ for every subdivided vertex $s$, for every pair
of original vertices $t$ and $t'$, either $f(t) = 2$ or $f(t') =
2$ (or both). Hence, at most one original vertex can have weight
less than $2$. If such a vertex exists, its weight must be $1$,
hence $\gamaR(G_n) = f(V(G_n))\ge 2n-1$. By the above,
since $\gamawtwo(G)\le n$, we infer that $\gamaR(G_n) = 2n-1$.
\end{proof}

By this it is proven that all non-bold entries of Table~\ref{table-sharp}
demonstrate the sharpness of the corresponding bounds from Table~\ref{table-bounds}.

%%%%%%%%%%%%%%%%%%%%%%%%%%%%%%%%%%%%%%%%%%%
\subsection{Proofs of unboundedness}\label{ss:unbound}
For the direct proofs of unboundedness of one parameter with respect to another
one can use the families of graphs summarized in Table \ref{table-sharp}.
We will prove in this section the correctness of these examples.
As we will elaborate, some of the unboundedness proofs follow by transitivity,
using the bounds in Table \ref{table-bounds} and are summarized in Table
\ref{table-proofs}.

While the values for the star $K_{1,n}$ are easy to prove,
we can argue the nonexistence of corresponding functions only by
focusing on two parameters, notably $\gamatwo$ and $\gamatsettwo$.

\begin{prp}\label{prp:unb-gamatwo-gamatsettwo}
There is no function $f: \mathbb{N} \to \mathbb{N}$ such that
$\gamatwo(G)\le f(\gamatsettwo(G))$ for every graph $G$ admitting both parameters.
\end{prp}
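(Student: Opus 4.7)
The plan is to exhibit an explicit infinite family of graphs $\{G_n\}_{n\in\mathbb{N}}$ whose total $\{2\}$-domination number stays bounded while the $2$-domination number grows without bound. This immediately contradicts the existence of any function $f:\mathbb{N}\to\mathbb{N}$ with $\gamatwo(G)\le f(\gamatsettwo(G))$.

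The natural candidate is the family of stars $K_{1,n}$ with $n\ge 2$, as recorded in row~X of Table~\ref{table-families}. Since $K_{1,n}$ has no isolated vertices, both $\gamatwo$ and $\gamatsettwo$ are well defined. First I would verify that $\gamatsettwo(K_{1,n})=4$ for every $n\ge 2$. Let $c$ denote the center of $K_{1,n}$ and $\ell_1,\dots,\ell_n$ its leaves. For any total $\{2\}$-dominating function $g$, the condition $g(N(\ell_i))\ge 2$ forces $g(c)=2$, and the condition $g(N(c))\ge 2$ forces $\sum_{i}g(\ell_i)\ge 2$, so $g(V)\ge 4$; on the other hand, assigning weight $2$ to $c$ and weight $1$ to two leaves gives a valid function of weight exactly $4$.

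Next I would show $\gamatwo(K_{1,n})=n$ for $n\ge 2$. Let $h:V\to\{0,1\}$ be a $2$-dominating function. If $h(\ell_i)=0$ then $\ell_i$ needs two neighbors of weight $1$, but $\ell_i$ has only the single neighbor $c$, which is impossible. Hence $h(\ell_i)=1$ for every leaf, yielding $h(V)\ge n$; conversely, setting $h(c)=0$ and $h(\ell_i)=1$ for all $i$ is $2$-dominating since $h(N(c))=n\ge 2$. Thus $\gamatwo(K_{1,n})=n$.

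Finally, suppose for contradiction that some $f:\mathbb{N}\to\mathbb{N}$ satisfies $\gamatwo(G)\le f(\gamatsettwo(G))$ for every graph $G$ where both invariants are defined. Applied to $K_{1,n}$ for arbitrary $n\ge 2$, this yields $n=\gamatwo(K_{1,n})\le f(\gamatsettwo(K_{1,n}))=f(4)$, a fixed constant, which is absurd. The verification is entirely elementary; there is no real obstacle, only the bookkeeping of confirming the two exact values above.
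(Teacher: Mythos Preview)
Your proof is correct and follows essentially the same approach as the paper: both use the family of stars $K_{1,n}$, observing that $\gamatwo(K_{1,n})=n$ (since every leaf must receive weight $1$) while $\gamatsettwo(K_{1,n})\le 4$. You are slightly more thorough in verifying exact values, and you assign weight $1$ to two leaves rather than weight $2$ to one leaf for the upper bound on $\gamatsettwo$, but these are cosmetic differences.
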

\begin{proof}
It is easy to see that $\gamatwo(K_{1,n})=n$ for $n\ge 2$, since the leaves
cannot be dominated by a $2$-dominating function $f$ from the outside,
i.e., each leaf $u$ must be assigned $f(u)=1$. On the other hand,
assigning $f(v)=2$ to the central vertex, and $f(u)=2$ to one of the leaves,
results in a total $\{2\}$-dominating function of $K_{1,n}$,
thus $\gamatsettwo(K_{1,n})\le 4$.
\end{proof}

Since the parameters in columns 1-4, 9, and 13 are bounded from above by
a function of $\gamatsettwo(G)$ (see Table~\ref{table-bounds}),
we derive that $K_{1,n}$ is also an example for these invariants compared with $\gamatwo$.
In addition, since the parameters
$\gamaxtwo,\rgamatwo,\rgamaxtwo$ are bounded from below by $\gamatwo$
(see the diagram on Fig.~\ref{fig:Hasse})
we infer from both observations that the entries $(i,j)$ from the subtable
$\{6,7,10,11\}\times \{1,2,3,4,5,9,13\}$ of Table~\ref{table-bounds},
are correct. That is, the family of stars $K_{1,n}$ shows  that there does not
exist an upper bound on a parameter $\rho$ in terms of a function of another parameter $\rho'$
for all the corresponding pairs $(\rho,\rho')$.
Note that this family cannot be used to obtain similar conclusions
also for row $12$, that is, for the parameter $\rgamatxtwo(G)$:
this parameter is not finite on the family of stars.

Consider now the graphs $Q_n$, for $n\ge 3$, which can be obtained from
the multigraph $K_2^{(n)}$ by replacing each edge with a path $P_4$ (cf.~Fig.~\ref{fig:SK3nQn}).
These graphs will be used for the proofs of unboundedness
in the row 8 of Tables~\ref{table-bounds}~and~\ref{table-families}, concerning the parameter $\gamatxtwo$.

\begin{prp}\label{prp:unb-gamatxtwo-gamatsettwo}
There is no function $f: \mathbb{N} \to \mathbb{N}$ such that
$\gamatxtwo(G)\le f(\gamatsettwo(G))$ for every graph $G$ admitting both parameters.
\end{prp}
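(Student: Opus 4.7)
The plan is to use the family of graphs $Q_n$ defined earlier in the paper (the double subdivision of the multigraph $K_2^{(n)}$) and show that while $\gamatxtwo(Q_n)$ grows linearly with $n$, $\gamatsettwo(Q_n)$ remains bounded by a constant. Once both values are established, it is immediate that no function $f$ can satisfy $\gamatxtwo(G) \le f(\gamatsettwo(G))$ for every graph $G$ admitting both parameters: plugging in $G=Q_n$ for large $n$ would force $f(8)$ to be arbitrarily large.

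First I would set up notation. Let $u,v$ be the two degree-$n$ vertices of $Q_n$, and for $i=1,\ldots,n$ let $a_i,b_i$ be the two subdivision vertices on the $i$-th $P_4$, with $a_i$ adjacent to $u$ and $b_i$ adjacent to $v$ (and $a_ib_i \in E(Q_n)$). Thus $|V(Q_n)|=2n+2$, and every $a_i$ (resp.~$b_i$) has exactly two neighbors, namely $u$ and $b_i$ (resp.~$v$ and $a_i$). For the lower bound on $\gamatxtwo$, observe that if $f:V(Q_n)\to\{0,1\}$ is any total double dominating function then, for each $i$, the condition $f(N(a_i))\ge 2$ forces $f(u)=f(b_i)=1$, and symmetrically $f(N(b_i))\ge 2$ forces $f(v)=f(a_i)=1$. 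Hence $f\equiv 1$, giving $\gamatxtwo(Q_n)=2n+2$.

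Next I would exhibit a total $\{2\}$-dominating function of $Q_n$ of weight $8$, showing $\gamatsettwo(Q_n)\le 8$. Define $g:V(Q_n)\to\{0,1,2\}$ by $g(u)=g(v)=g(a_1)=g(b_1)=2$ and $g(w)=0$ for all remaining vertices. To verify $g(N(w))\ge 2$ for every $w$: for $w=u$ we have $g(N(u))=g(a_1)=2$; symmetrically for $w=v$; for $w=a_1$, $g(N(a_1))=g(u)+g(b_1)=4$; symmetrically for $w=b_1$; for $w=a_i$ with $i\neq 1$, $g(N(a_i))=g(u)+g(b_i)=2$; symmetrically for $w=b_i$. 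So $g$ is a total $\{2\}$-dominating function, and its total weight is $8$.

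Combining the two estimates gives $\gamatxtwo(Q_n)=2n+2\to\infty$ while $\gamatsettwo(Q_n)\le 8$ for all $n\ge 3$, which is exactly what is needed to preclude the existence of a bounding function $f$. I do not foresee any serious obstacle here; the only points to be careful about are the verification that the proposed $g$ is feasible (which is a direct case check) and the observation that the degree-$2$ vertices of $Q_n$ act as a ``rigidity gadget'' forcing every total double dominating set of $Q_n$ to be all of $V(Q_n)$, which is what drives the unboundedness.
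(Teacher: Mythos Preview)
Your proposal is correct and follows essentially the same approach as the paper's proof: both use the family $Q_n$, argue that every total double dominating function must assign $1$ to all vertices (since each degree-$2$ vertex forces both its neighbors into the set), giving $\gamatxtwo(Q_n)=2n+2$, and then exhibit a total $\{2\}$-dominating function of weight $8$ by putting weight $2$ on $u$, $v$, and one neighbor of each. Your write-up is simply more explicit in the notation and verification.
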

\begin{proof}
It is easy to see that $\gamatxtwo(Q_n)=2n+2$. Indeed,
if $f:V(G)\to\{0,1\}$ is a total double dominating
function, then for every vertex of degree 2 both its neighbors
must receive $f$-value 1. This implies that all vertices
of $Q_n$ must receive value 1. To see that $\gamatsettwo(Q_n)\le 8$ for $n\ge 3$,
consider the function $f$ assigning $2$ to both vertices of
degree $n$, and $2$ to one of the neighbors of each of these two vertices.
\end{proof}

Since the parameters $\gama,\gamat,\gamawtwo,\gamasettwo,\rgamawtwo,\gamaR$
are all bounded from above by $\gamatsettwo$, we infer from Proposition~\ref{prp:unb-gamatxtwo-gamatsettwo}
that all entries $(i,j)$ from $\{8\}\times\{1,2,3,4,5,9,13\}$ of Table~\ref{table-bounds} are correct.
In fact, the family of graphs $Q_n$ demonstrates the nonexistence of a
function $f$ bounding the corresponding parameters with $f(\gamatxtwo(G))$.

Recall that the graph $S(K_3^{(n)})$ is the subdivision graph of the multigraph
$K_3^{(n)}$ (the multigraph obtained from $K_3$ by adding $n-1$
parallel edges between each pair of vertices; cf.~Fig.~\ref{fig:SK3nQn}).

\begin{prp}\label{prp:unb-rgamatwo-gamatxtwo}
There is no function $f: \mathbb{N} \to \mathbb{N}$ such that
$\rgamatwo(G)\le f(\gamatxtwo(G))$ for every graph $G$ admitting both parameters.
\end{prp}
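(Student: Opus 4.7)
The plan is to use the family of graphs $\{S(K_3^{(n)})\}_{n \ge 3}$ suggested by row Y of Table~\ref{table-families}. Let $u$, $v$, $w$ denote the three \emph{original} vertices (each of degree $2n$), and for each unordered pair $\{x,y\} \subseteq \{u,v,w\}$ let $S_{xy}$ denote the set of $n$ \emph{subdivision} vertices whose two neighbors are precisely $x$ and $y$. Since every vertex has degree at least $2$, both parameters $\gamatxtwo$ and $\rgamatwo$ are well defined on these graphs. It suffices to show that $\gamatxtwo(S(K_3^{(n)}))$ stays bounded (by $6$) while $\rgamatwo(S(K_3^{(n)}))$ grows with $n$.

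For the bound on $\gamatxtwo$, I would exhibit the set $D$ consisting of all three original vertices together with one arbitrarily chosen vertex from each of $S_{uv}$, $S_{vw}$, $S_{uw}$, and verify directly that $D$ is a total double dominating set of size $6$: every subdivision vertex has both its original neighbors in $D$, and every original vertex has exactly two subdivision neighbors in $D$, namely one from each of the two $S$-sets to which it belongs. Hence $\gamatxtwo(S(K_3^{(n)})) \le 6$ for every $n \ge 3$, and this upper bound is all that is needed.

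For the lower bound on $\rgamatwo$, I would prove that $\rgamatwo(S(K_3^{(n)})) \ge n$, which at once implies unboundedness. Let $f \colon V \to \{\emptyset,\{a\},\{b\}\}$ be any rainbow $2$-dominating function. The crux is a pigeonhole observation: the three labels $f(u)$, $f(v)$, $f(w)$ cannot realize $f(x) \cup f(y) = \{a,b\}$ for all three pairs. Indeed, if some $f(x) = \emptyset$, then any pair containing $x$ has union $f(y) \in \{\emptyset,\{a\},\{b\}\}$, never equal to $\{a,b\}$; and if all three labels lie in $\{\{a\},\{b\}\}$, then two of them must coincide, and that pair has union of size $1$. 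Consequently some pair $\{x,y\}$ satisfies $f(x) \cup f(y) \ne \{a,b\}$. Since every $s \in S_{xy}$ has $N(s) = \{x,y\}$, the defining condition of rainbow $2$-domination forces $f(s) \ne \emptyset$ for each of the $n$ vertices of $S_{xy}$, contributing a total weight of at least $n$ to $f(V)$.

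The main obstacle is precisely the pigeonhole step for rainbow $2$-domination above, which exploits the restriction of the codomain to $\{\emptyset, \{a\}, \{b\}\}$ (excluding $\{a,b\}$); once this structural observation is in place, the remainder of the argument is a direct verification, and the two estimates combine to rule out any function $f$ bounding $\rgamatwo$ in terms of $\gamatxtwo$.
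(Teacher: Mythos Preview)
Your proof is correct and follows essentially the same approach as the paper: both use the family $S(K_3^{(n)})$, establish $\gamatxtwo\le 6$ via the same six-vertex set, and obtain the lower bound on $\rgamatwo$ through the same pigeonhole observation on the labels of the three original vertices. Your bound $\rgamatwo\ge n$ is slightly weaker than the paper's $\rgamatwo\ge n+3$ (the paper also counts the three original vertices when they are all nonempty, and gets $2n$ in the case where some original vertex is labeled $\emptyset$), but this makes no difference for the unboundedness conclusion.
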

\begin{proof}
The result follows from the correctness of the entries (Y,8) and (Y,10)
in Table~\ref{table-families}. To see this consider the function
$f:V(S(K_3^{(n)}))\to\{0,1\}$, which assigns $1$ exactly to the three vertices
of degree $2n$ and to three vertices of degree 2, one from each subdivided parallel edge.
Then $f$ is clearly a total double dominating function of the graph, with total
weight $6$. On the other hand, note that for $n\ge 3$,
every rainbow $2$-dominating function of $S(K_3^{(n)})$
that assigns the empty set to a vertex of degree $2n$ is of total weight
at least $2n$. Furthermore, in every rainbow $2$-dominating function of $S(K_3^{(n)})$
that assigns a non-empty set to each vertex of degree $2n$,
at least two vertices of degree $2n$ receive the same value, hence
the neighbors $v$ of these two vertices must receive a non-empty value.
The above arguments imply that $\rgamatwo(S(K_3^{(n)}))\ge n+3$.
\end{proof}

Since $\gamatwo$ and $\gamaxtwo$ are bounded from above by $\gamatxtwo(G)$,
we derive that $S(K_3^{(n)})$ is also an example for these invariants
with respect to $\rgamatwo$. In addition, the parameter $\rgamaxtwo$ is
bounded from below by $\rgamatwo$ (see the Hasse diagram on Fig.~\ref{fig:Hasse}),
which together with the previous observation implies
that the entries $(i,j)$ from the subtable
$\{10,11\}\times \{6,7,8\}$ of Table~\ref{table-bounds}, are correct.
In fact, the family of graphs $S(K_3^{(n)})$ can be used to demonstrate that there does not
exist an upper bound on a parameter $\rho$ in terms of a function of another parameter $\rho'$
for all the corresponding pairs $(\rho,\rho')$.

\begin{prp}\label{prp:unb-rgamatxtwo-rgamaxtwo}
There is no function $f: \mathbb{N} \to \mathbb{N}$ such that
$\rgamatxtwo(G)\le f(\rgamaxtwo(G))$, for every graph $G$
admitting both parameters.
\end{prp}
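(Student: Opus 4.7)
The plan is to use the family $\{T_n\}_{n\ge 1}$ defined in this section. I will show that $\rgamaxtwo(T_n)\le 6$ remains bounded in $n$, while $\rgamatxtwo(T_n)\ge n$ tends to infinity, which precludes the existence of any function $f$ as in the statement.

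For the upper bound $\rgamaxtwo(T_n)\le 6$, invoke Proposition~\ref{prp:disjoint-domination}: it suffices to exhibit two disjoint dominating sets of $T_n$ of total size $6$. I will take $A_0=\{s_1,t_1,t_3\}$ and $B_0=\{s_3,t_5,t_2\}$. The vertices of the triangle and all $v_i$'s are dominated in $A_0$ by $s_1$ and in $B_0$ by $s_3$; the $w_i$'s are dominated in $A_0$ by $t_1$ and in $B_0$ by $t_5$; the remaining vertices of the $5$-cycle are handled by direct inspection (each $t_i$ has a neighbor in $\{t_1,t_3\}$ and one in $\{t_2,t_5\}$). Disjointness is immediate, giving $\rgamaxtwo(T_n)\le |A_0|+|B_0|=6$.

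For the lower bound $\rgamatxtwo(T_n)\ge n$, I apply Proposition~\ref{prp:rgamatxtwo} to reduce to showing $\gamma_t\gamma_t(T_n)\ge n$, and then case-analyze the positions of $t_1,t_5$ in a pair $(A,B)$ of disjoint total dominating sets. The mixed configuration $t_1\in A$, $t_5\in B$ (and its symmetric counterpart) should be ruled out: since $N(t_3)=\{t_2,t_4\}$, both $A$ and $B$ must meet $\{t_2,t_4\}$, so these two vertices are split between $A$ and $B$. In the subcase $t_2\in A$, $t_4\in B$, total domination of $t_4\in B$ by $A$ together with $t_5\in B$ forces $t_3\in A$, while total domination of $t_2\in A$ by $B$ together with $t_1\in A$ forces $t_3\in B$, contradicting $A\cap B=\emptyset$; the subcase $t_2\in B$, $t_4\in A$ is analogous. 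Consequently, one of the sets $A,B$ contains neither $t_1$ nor $t_5$; say $\{t_1,t_5\}\cap A=\emptyset$. For each $i$, the total domination of $w_i$ by $A$ gives $\emptyset\ne A\cap N(w_i)=A\cap\{v_i,t_1,t_5\}=A\cap\{v_i\}$, forcing $v_i\in A$; hence $|A|\ge n$. The opposite case is symmetric and gives $|B|\ge n$.

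The main obstacle is the contradiction argument that eliminates the mixed configuration on $\{t_1,t_5\}$: it rests on the rigid structure of the $5$-cycle, specifically on the fact that $t_3$ has only the two neighbors $t_2$ and $t_4$, which must serve simultaneously for the total domination of $t_3$ by both $A$ and $B$ and for the total domination of each of $t_2,t_4$ themselves. Once this case is handled, the remaining step uses only that removing both $t_1$ and $t_5$ from a total dominating set leaves $v_i$ as the unique possible dominator of $w_i$, forcing all $v_i$'s into the set and yielding the linear growth.
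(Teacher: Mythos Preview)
Your proof is correct and follows essentially the same approach as the paper: both use the family $T_n$, bound $\rgamaxtwo(T_n)$ by an explicit construction of size $6$, and show $\rgamatxtwo(T_n)\ge n$ by arguing that $t_1$ and $t_5$ are forced onto the same side (so that each $w_i$ must be dominated through $v_i$). The only cosmetic difference is that you work via the disjoint (total) domination reformulations (Propositions~\ref{prp:disjoint-domination} and~\ref{prp:rgamatxtwo}) and a short case analysis on $t_2,t_4$, whereas the paper argues directly with the labeling function and the degree-$2$ constraint at $t_2$ and $t_4$ to obtain $h(t_1)=h(t_5)$ in one line; your subcase split on $\{t_2,t_4\}$ is in fact unnecessary, since your contradiction (forcing $t_3\in A\cap B$) uses only $t_1\in A$, $t_5\in B$.
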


\begin{proof}
Recall that $T_n$, for $n \in \mathbb{N}$, is the graph whose vertex set
is $\{v_1, \dots, v_n,$ $w_1, \dots, w_n,$ $s_1,$ $s_2,$ $s_3,$
$t_1, \dots, t_5\}$ and such that $s_1,s_2,s_3$ induce a triangle,
$t_1, \dots, t_5$ induce $C_5$, $s_1$ and $s_3$ are adjacent to
$v_i$ for every $1 \leq i \leq n$, $t_1$ and $t_5$ are adjacent to
$w_i$ for every $1 \leq i \leq n$, $v_iw_i \in E(T_n)$ for every
$1 \leq i \leq n$, and there are no other edges (see Fig.~\ref{fig:Tn}).

Consider $h$ defined as $h(s_1)=h(t_1)=h(t_4)=\{a\}$,
$h(s_3)=h(t_3)=h(t_5)=\{b\}$, and $h(v) = \emptyset$ for every
other $v \in V(T_n)$. It can be easily checked that $h$ is a
rainbow double domination function of $T_n$, as for every vertex
$v\in V(T_n)$, it holds $h_\cup(N[v]) = \{a,b\}$. So
$\rgamaxtwo(T_n) \le 6$, and indeed it can be seen that it holds
by equality, because $s_2$, $t_3$ and $w_1$ have disjoint closed
neighborhoods.

As for the rainbow total double domination number, where
$h_\cup(N(v)) = \{a,b\}$ is required for every vertex $v \in
V(T_n)$, first note that if a vertex $v$ has degree two in a graph
then its two neighbors have to be labeled with different labels.
This implies that in a rainbow total double domination function
$h$ of $T_n$, $h(t_1) \neq h(t_3)$ and $h(t_5) \neq h(t_3)$, hence
$h(t_1)=h(t_5)$. So the vertices $w_1, \dots, w_n$ are missing one
label on their open neighborhoods, therefore vertices $v_1, \dots,
v_n$ have to have a nonempty label, implying $\rgamatxtwo(T_n) \ge
n$. Indeed, a rainbow total double domination function $h$ of
$T_n$ can be defined as
$h(s_1)=h(s_2)=h(t_1)=h(t_2)=h(t_5)=\{a\}$,
$h(s_3)=h(t_3)=h(t_4)=h(w_1)=\{b\}$, $h(v_i)=\{b\}$ for every $1
\leq i \leq n$, and $h(w_i) = \emptyset$ for every $2 \leq i \leq
n$. It is not hard to see that its weight is minimum possible, so
$\rgamatxtwo(T_n) = n + 9$.
\end{proof}

Since all the parameters (except of course for $\rgamatxtwo$) in graphs $G$
are bounded by a function of $\rgamatwo(G)$ we infer that the entries
in the row $12$ of Table~\ref{table-bounds} are correct. In fact,
the family of graphs $T_n$ demonstrates the nonexistence of a
function $f$ bounding any of the other parameters with $f(\rgamatxtwo(G))$.

\section{Algorithmic and complexity issues}\label{sec:algo}

We now discuss the algorithmic and complexity consequences of the bounds obtained in Section~\ref{sec:comparison} for corresponding optimization problems. More specifically, we obtain new results regarding the existence of approximation algorithms for the studied invariants,
matched with tight or almost tight inapproximability bounds, which hold even in the class of split graphs.

Recall that an algorithm ${\cal A}$ for a minimization problem $\Pi$ is said to be a {\em $c$-approximation algorithm} (where $c\ge 1$) if  it runs in polynomial time and for every instance $I$ of $\Pi$, we have ${\cal A}(I)\le c\cdot {\it OPT}(I)$, where ${\cal A}(I)$ is the value of the solution produced by ${\cal A}$, given $I$, and ${\it OPT}(I)$ is the optimal solution value, given $I$. (For more details on complexity and approximation, we refer to~\cite{MR1851303,MR1734026}.)  Given a graph $G$, let $\rho(G)$ denote the optimal value of any of the minimization parameters studied in this paper (e.g., the domination number of $G$, the rainbow total double domination number of $G$, etc.). The corresponding optimization problem is the following problem: Given a graph $G$, compute the value of $\rho(G)$.
In the case of a $c$-approximation algorithm for the above problem, we also require that for each instance $G$ not only an approximation to the optimal value but also a feasible solution to the problem is computed achieving value at most $c\cdot \rho(G)$.
Note that in the problems relating to any of the parameters considered in this paper, a feasible solution is a function $f$ with domain $V$, whose value equals the total weight $f(V)$ (see Section~\ref{sec:definitions}).

First we recall a simple (folklore) observation that can be useful for transferring both lower and upper bounds regarding (in)approximability of minimization problems. In order to keep the notation as simple as possible, we keep the presentation of the result confined to the parameters defined in Section~\ref{sec:definitions}, however, the same result clearly applies more generally. For the sake of completeness, we include the simple proof.

\begin{prp}\label{prp:reduction}
Let $\rho$ and $\rho'$ be any two graph invariants defined in Section~\ref{sec:definitions} and
let ${\cal G}$ be a class of graphs such that there exist constants $c_1,c_2>0$ such that
for all $G\in {\cal G}$, we have $c_1\cdot \rho(G)\le \rho'(G)\le c_2\cdot \rho(G)\,.$
Suppose furthermore that there exists a polynomial time algorithm that for a given
graph $G\in {\cal G}$ and a feasible solution $f$ to $\rho$, computes a feasible solution $f'$
to $\rho'$ with $f'(V(G))\le c_2\cdot f(V(G))$. Then, for every $c\ge 1$, if there is a $c$-approximation algorithm for $\rho$ on graphs in ${\cal G}$, then there is $(cc_2/c_1)$-approximation algorithm for $\rho'$ on graphs in ${\cal G}$.
\end{prp}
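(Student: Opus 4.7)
The plan is to give a straightforward reduction: starting from an instance $G\in{\cal G}$ of the optimization problem for $\rho'$, I would run the assumed $c$-approximation algorithm for $\rho$ on $G$ to obtain a feasible solution $f$ for $\rho$ with $f(V(G))\le c\cdot \rho(G)$, then invoke the given polynomial-time conversion procedure to turn $f$ into a feasible solution $f'$ for $\rho'$ satisfying $f'(V(G))\le c_2\cdot f(V(G))$. The composition of the two polynomial-time procedures is clearly polynomial, so it only remains to verify the approximation ratio.

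The key inequality chain will be
\[
f'(V(G))\;\le\; c_2\cdot f(V(G))\;\le\; c_2\cdot c\cdot\rho(G)\;\le\;\frac{cc_2}{c_1}\cdot\rho'(G),
\]
where the last step uses the hypothesis $c_1\cdot \rho(G)\le \rho'(G)$, rewritten as $\rho(G)\le \rho'(G)/c_1$. This shows that the algorithm returns a feasible solution for $\rho'$ whose value is at most $(cc_2/c_1)\cdot\rho'(G)$, which is exactly the claimed approximation guarantee.

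There is essentially no obstacle here; the statement is a clean composition of (i) a one-sided comparison of the two parameters, (ii) a constructive version of that comparison in one direction (the conversion procedure), and (iii) the approximation property of the given algorithm for $\rho$. The only small point worth stressing in the writeup is that an approximation algorithm in the sense used in the paper is required to output not just an approximate value but also a feasible solution witnessing it, which is exactly what allows one to feed it into the conversion step. Thus the proof amounts to chaining the three facts and observing polynomiality, and I expect it to be only a few lines long.
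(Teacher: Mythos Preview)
Your proposal is correct and follows essentially the same approach as the paper: run the $c$-approximation algorithm for $\rho$, feed its output into the conversion procedure, and chain the inequalities $f'(V(G))\le c_2\cdot f(V(G))\le c_2 c\cdot\rho(G)\le (cc_2/c_1)\rho'(G)$ exactly as you wrote. The paper's proof is the same three-line chain plus the observation that the composite algorithm runs in polynomial time.
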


\begin{proof}
Let ${\cal A}$ be a $c$-approximation algorithm for $\rho$ on graphs in ${\cal G}$.
Consider the following algorithm for $\rho'$ on graphs in ${\cal G}$:
\begin{enumerate}
  \item Given a graph $G\in {\cal G}$, run ${\cal A}$ on $G$ and let $f_{\cal A}$ be the solution produced by ${\cal A}$.
  \item Compute a feasible solution $f'$ to $\rho'$ with $f'(V(G))\le c_2\cdot f_{\cal A}(V(G))$ using the algorithm that exists by assumption.
  \item Return $f'$.
\end{enumerate}
Since ${\cal A}$ is a $c$-approximation algorithm for $\rho$ on graphs in ${\cal G}$, we have $ f_{\cal A}(V(G))\le c\rho(G)$. It follows that
$f'(V(G)) \le  c_2\cdot f_{\cal A}(V(G))\le c_2\cdot c\rho(G)\le (c_2c/c_1)\rho'(G)\,,$ where the last inequality follows from $c_1\rho(G)\le \rho'(G)$. As the algorithm clearly runs in polynomial time, it is a $(c_2c/c_1)$-approximation algorithm for $\rho'$ for graphs in ${\cal G}$.
\end{proof}

Note that all the bounds from Table~\ref{table-bounds} are of the form $\rho'(G)\le c\rho(G) - d$ for some constants $c\ge 1$ and $d\ge 0$, hence
they immediately imply bounds of the form $\rho'(G)\le c\rho(G)$ (for some constant $c\ge 1$).
Furthermore, it follows from the proofs of the bounds that all the translations between parameters involving bounds summarized in Table~\ref{table-bounds} can be efficiently constructed, in the sense that if $\rho'(G)\le c\rho(G)$ is a bound following from bounds in Table~\ref{table-bounds}, then there is a polynomial time algorithm that, given a graph $G = (V,E)$ and a feasible solution $f$ to $\rho$, computes a feasible solution $f'$ to $\rho'$ with $f'(V)\le c\cdot f(V)$.

\subsection{Lower bounds}

\begin{sloppypar}
Several hardness and inapproximability results for variants of domination considered in the paper are already known in the literature. We list here only the strongest results known and an earliest available proof for each of them, making no attempt to survey the literature regarding hardness of the problems in various graph classes -- with the single exception of the class of split graphs, which naturally appears in many of the underlying proofs.
A graph $G = (V,E)$ is said to be {\em split} if it admits a {\em split partition}, that is, a pair $(C,I)$ such that $C$ is a clique in $G$, $I$ is an independent set in $G$, $C\cup I = V$, and $C\cap I = \emptyset$. Split graphs were introduced by F\"oldes and Hammer in~\cite{MR0463041}, where several characterizations were also given.
\end{sloppypar}

\vbox{\begin{sloppypar}
\begin{thm}[combining results from~\cite{MR2457654,MR3027964,MR2024938,MR2960327}]\label{thm:inapprox-lower-bound-known}
For every $\rho\in \{\gama,\gamat,\gamatwo,$ $\gamaxtwo,\gamatxtwo\}$ and every $\epsilon >0$, there is no polynomial time algorithm approximating $\rho$ for $n$-vertex split graphs without isolated vertices within a factor of \hbox{$(1-\epsilon)\ln n$}, unless $\NP\subseteq {\sf DTIME}(n^{O(\log\log n)})$.
\end{thm}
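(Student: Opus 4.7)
Since this statement simply packages together five known inapproximability results that already appear in the literature, my plan is not to reprove each of them but to assemble the appropriate references and verify that each one indeed delivers the claimed $(1-\epsilon)\ln n$ threshold for $n$-vertex split graphs under the complexity hypothesis $\NP\not\subseteq {\sf DTIME}(n^{O(\log\log n)})$. The underlying source of hardness in every case is Feige's classical lower bound for the \textsc{Set Cover} problem, so the task reduces to exhibiting, for each parameter $\rho$, an approximation-preserving reduction from \textsc{Set Cover} to computing $\rho$ whose output is a split graph on roughly $n\sim |U|+|\mathcal S|$ vertices.

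I would address the five parameters in order. For $\gama$, the standard reduction (take a \textsc{Set Cover} instance with universe $U$ and family $\mathcal S$, let $C=\mathcal S$ be a clique and $I=U$ an independent set, and join $u\in U$ to $S\in\mathcal S$ whenever $u\in S$) yields a split graph in which minimum dominating sets correspond, up to an additive constant, to minimum set covers; this is the route in~\cite{MR2960327}. For $\gamat$ and $\gamatwo$ the same reduction works after a small local gadgeting (e.g.\ attaching pendant edges or duplicating elements) to force the desired domination multiplicity without changing the asymptotic size, as done in~\cite{MR2457654,MR2024938}. For $\gamaxtwo$ and $\gamatxtwo$ the reduction in~\cite{MR3027964} adds a constant-size gadget to each element vertex so that every vertex needs to be dominated twice (respectively, totally dominated twice), while the clique/independent-set structure is preserved. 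In every case the resulting graph has $n=|U|+|\mathcal S|+O(1)$ vertices, remains split, and has optimum differing from the optimum of \textsc{Set Cover} by at most an additive constant, so Feige's bound transfers verbatim.

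In drafting the proof I would therefore state that each of the five inapproximability bounds has already been established in the cited paper, and then, for the reader's convenience, exhibit one representative reduction (say, the one for $\gama$) in full to illustrate why the output is a split graph and why the correspondence is approximation-preserving, after which I would indicate precisely which gadget modifications in the remaining four papers upgrade the base reduction to $\gamat, \gamatwo, \gamaxtwo, \gamatxtwo$. I expect the main obstacle to be purely bookkeeping: checking that each published reduction genuinely produces a split graph (as opposed to a more general graph class such as chordal or bipartite) and that the instance blow-up is only polynomial of degree bounded independently of $\epsilon$, so that a hypothetical $(1-\epsilon)\ln n$-approximation on split graphs transports back to a $(1-\epsilon')\ln |U|$-approximation for \textsc{Set Cover}, contradicting~\cite{MR1715438}-style lower bounds. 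No new combinatorial work is needed; the theorem is a clean consequence of existing literature, and the proof in the paper should read essentially as a citation lemma with the reductions summarized for completeness.
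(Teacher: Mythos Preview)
Your overall plan is right in spirit: the paper treats this theorem as a citation lemma and gives no self-contained proof, only a paragraph attributing each case to its source and checking that the split-graph restriction goes through. Two points of divergence are worth noting. First, your attributions are scrambled: in the paper, $\gama$ and $\gamat$ are due to Chleb{\'{\i}}k and Chleb{\'{\i}}kov{\'a}~\cite{MR2457654}, $\gamatwo$ to Cicalese et al.~\cite{MR3027964}, $\gamaxtwo$ to Klasing and Laforest~\cite{MR2024938}, and $\gamatxtwo$ independently to Pradhan~\cite{MR2960327} and Cicalese et al.~\cite{MR3027964}. Second, and more substantively, the paper's justification for why the results hold on split graphs is simpler than your proposed gadget analysis: only the Chleb{\'{\i}}k--Chleb{\'{\i}}kov{\'a} reductions are stated for split graphs explicitly, but the reductions in~\cite{MR3027964,MR2024938,MR2960327} are \emph{from domination or total domination} (not directly from \textsc{Set Cover}) and consist merely of adding a few universal vertices to the input graph, which manifestly preserves split-ness. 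So you need not rebuild each reduction from \textsc{Set Cover} with pendant edges or per-element gadgets; the paper's one-line observation about universal vertices does the job.
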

\end{sloppypar}}

\begin{sloppypar}
The statement of Theorem~\ref{thm:inapprox-lower-bound-known} was proved:
\begin{enumerate}[(i)]
        \item for domination and total domination ($\gama,\gamat$) by Chleb{\'{\i}}k and Chleb{\'{\i}}kov{\'a} in~\cite{MR2457654},
        \item for $2$-domination  ($\gamatwo$) by Cicalese et al.~\cite{MR3027964} (in the more general context of $k$-domination),
        \item for double domination  ($\gamaxtwo$) by Klasing and Laforest~\cite{MR2024938} (in the more general context of $k$-tuple domination),
         \item for total double domination ($\gamatxtwo$) independently by Pradhan~\cite{MR2960327} and by Cicalese et al.~\cite{MR3027964} (in both cases in the more general context of $k$-tuple domination).
\end{enumerate}
Only the results by Chleb{\'{\i}}k and Chleb{\'{\i}}kov{\'a} were mentioned explicitly for split graphs. However, since the corresponding reductions from~\cite{MR3027964,MR2024938,MR2960327} are performed from either domination or total domination by simply adding a number of universal vertices to the input graph, all of the above results also hold for split graphs.
\end{sloppypar}

{The basis of the inapproximability results from~\cite{MR2960327,MR2457654,MR2024938,MR3027964} summarized in Theorem~\ref{thm:inapprox-lower-bound-known} is the analogous result due to Feige for the well-known {\sc Set Cover} problem: Given a set system $(S,{\cal F})$ where $S$ is a finite set (also called a {\it ground set}) and ${\cal F}$ is a family (multiset) of subsets of $S$, find a smallest {\it set cover} of ${\cal F}$, that is, a sub-collection ${\cal F'}\subseteq {\cal F}$ such that $\bigcup{\cal F}' = S$ (that is, such that every element of $S$ appears in some member of ${\cal F}'$).}

\begin{thm}[Feige~\cite{MR1675095}]
For every $\epsilon >0$, there is no polynomial time algorithm approximating {\sc Set Cover} within a factor of \hbox{$(1-\epsilon)\ln n$} (where $n$ is the size of the ground set), unless $\NP\subseteq {\sf DTIME}(n^{O(\log\log n)})$.
\end{thm}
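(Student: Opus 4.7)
The plan is to establish Feige's inapproximability result via a gap-preserving reduction from a PCP-based problem. First, I would invoke the scaled-down $\mathsf{MIP} = \mathsf{NEXP}$ theorem (equivalently, the PCP theorem of Arora, Lund, Motwani, Sudan, and Szegedy) to obtain, for every language in \NP, a two-prover one-round interactive proof system with perfect completeness, soundness at most $1/2$, and polynomial-length questions and answers. This is the only place where the hardness assumption feeds into the argument, and subsequent steps are purely combinatorial reductions.

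The second step is Raz's parallel repetition theorem: running the protocol $k$ times in parallel drives the soundness error down to $2^{-\Omega(k)}$ while blowing up question and answer lengths by a factor of $k$. This amplification is essential because, as will become apparent in the analysis, the final inapproximability ratio depends inversely on the soundness error.

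The third and most delicate step is the reduction to \textsc{Set Cover} through a \emph{partition system}. An $(m,L)$-partition system consists of a ground set $B$ equipped with $m$ partitions $P_1, \ldots, P_m$, each into $L$ blocks, such that any family of blocks drawn from pairwise distinct partitions whose union is $B$ has size at least $(1-\varepsilon)\ln|B|$. A probabilistic construction yields such systems with $|B| = m^{O(L)}$. The \textsc{Set Cover} instance is then built by taking the ground set to be the disjoint union, over all question pairs $(q_1, q_2)$ that the verifier might send in the parallel-repeated protocol, of a fresh copy of a partition system whose $m$ partitions are indexed by the possible answers of the first prover. Each candidate subset corresponds to a (prover, answer) pair and, within each partition-system copy, covers exactly the block determined by the verifier's acceptance predicate. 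In the ``yes'' case, honest prover strategies induce a cover whose size is roughly twice the number of question pairs; in the ``no'' case, a decoding and averaging argument shows that any cover smaller than $(1-\varepsilon)\ln n$ times this honest size would extract prover strategies whose acceptance probability contradicts the soundness obtained by parallel repetition.

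The main obstacle is the simultaneous balancing of parameters. One must choose $k$ (number of repetitions), $L$ (blocks per partition), and the partition-system size so that the final inapproximability factor is exactly $(1-\varepsilon)\ln n$ for an arbitrarily prescribed $\varepsilon > 0$, while keeping the overall reduction of quasi-polynomial size. It is precisely this quasi-polynomial blowup that forces the hardness conclusion under $\NP \not\subseteq {\sf DTIME}(n^{O(\log\log n)})$ rather than under the weaker assumption $\PP \ne \NP$. Making the tight tradeoff $(1-\varepsilon)$ in front of $\ln n$ work, rather than a weaker constant $c\ln n$, is the heart of the argument and the reason the reduction cannot be made polynomial within this framework; later refinements (e.g.\ via two-query PCPs with sub-constant error) close the gap to $\PP \ne \NP$, but the Raz-plus-partition-system route outlined above is the most direct path from the PCP machinery to the stated bound.
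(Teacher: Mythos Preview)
The paper does not prove this theorem at all: it is stated as a known result, attributed to Feige with a citation, and used purely as a black box to derive the inapproximability bounds for the domination parameters (Theorems~\ref{thm:inapprox-lower-bound-known}, \ref{thm:inapprox-lower-bound-improved}, \ref{thm:inapprox-lower-bound}, and \ref{thm:inapprox}). No argument, not even a sketch, is given in the paper for Feige's theorem itself.

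Your proposal is a faithful high-level outline of Feige's original proof (two-prover one-round PCP, Raz's parallel repetition for soundness amplification, and the partition-system gadget to reduce to \textsc{Set Cover}), and as such it is not wrong. But in the context of this paper it is unnecessary: the theorem is imported, not established. If the task is to match the paper's treatment, the correct ``proof'' is simply a reference to \cite{MR1675095}. If instead you intend to supply the missing argument for completeness, you should be aware that your sketch, while capturing the architecture, omits substantial technical content (the explicit parameter choices, the decoding argument that extracts prover strategies from a small cover, and the analysis of the partition system) that would be required for an actual proof; what you have written is an accurate roadmap but not a proof.
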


{
In~$2014$, Dinur and Steurer improved Feige's inapproximability result by weakening the hypothesis to ${\sf P} \neq {\sf NP}$.

\begin{thm}[Dinur and Steurer~\cite{MR3238990}]\label{thm:Dinur-Steurer}
For every $\epsilon >0$, there is no polynomial time algorithm approximating {\sc Set Cover} within a factor of \hbox{$(1-\epsilon)\ln n$}, unless ${\sf P} ={\sf NP}$.
\end{thm}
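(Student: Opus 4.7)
The plan is to follow the blueprint pioneered by Lund--Yannakakis and sharpened by Feige: start from a gap instance of a two-prover one-round game (\textsc{Label Cover}), and compose it with a combinatorial set-cover gadget (a partition system) so that YES instances of \textsc{Label Cover} yield set systems with covers of size roughly $k$, while NO instances force every cover to have size at least $(1-\epsilon)k\ln n$, where $n$ is the size of the produced ground set. Feige's gadget, based on a partition system of Nilli (Alon) with $m$ ground elements and $\log m$ partitions into two parts, has the key property that covering the ground set using only parts from different partitions requires $(1-o(1))\ln m$ parts; this is what ultimately delivers the $\ln n$ factor in the inapproximability.

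The first step would be to obtain a \textsc{Label Cover} instance of constant answer-alphabet size (or with well-behaved projection property) whose gap between completeness and soundness is sufficiently large, with polynomial-size witnesses. The second step would be to instantiate Feige's multi-prover proof system viewpoint, replacing the provers' answers by picks from a partition system; in a YES instance, one can cover the union of all partition systems using few coordinated choices, while in a NO instance, any cover must implicitly agree on the provers' answers across many constraints, forcing it to hit many parts of many partition systems and incurring the logarithmic blowup.

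The hard part is obtaining the tight constant $1-\epsilon$ under only the assumption $\mathsf{P}\neq\mathsf{NP}$, rather than the stronger quasi-polynomial hypothesis $\mathsf{NP}\not\subseteq \mathsf{DTIME}(n^{O(\log\log n)})$ used by Feige. Feige's construction loses a quasi-polynomial factor because his reduction inflates instance size when driving the soundness of \textsc{Label Cover} down to $1/(\log n)^{c}$ via Raz's parallel repetition theorem. The core obstacle is therefore to replace this loss with a polynomial one, which requires a sharper analysis of parallel repetition (or a direct construction of strong PCPs with projection property) that keeps the soundness decaying appropriately as a function of the number of repetitions without the quasi-polynomial size blowup.

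The Dinur--Steurer innovation one would use here is their analytical/information-theoretic approach to parallel repetition, which yields tight bounds on the value of the repeated game and, crucially, matches the parameters needed by the Feige-type composition with only a polynomial-time reduction. Plugging this sharper parallel-repetition bound into the Feige framework is what upgrades the complexity-theoretic assumption from $\mathsf{NP}\not\subseteq\mathsf{DTIME}(n^{O(\log\log n)})$ to $\mathsf{P}\neq\mathsf{NP}$, while preserving the $(1-\epsilon)\ln n$ inapproximability factor. Verifying that the composition of the improved \textsc{Label Cover} hardness with Nilli's partition system still yields the exact constant $1-\epsilon$ in the logarithmic factor is the most delicate accounting step, and is where I would expect to spend most of the technical effort.
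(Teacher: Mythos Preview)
The paper does not prove this theorem at all: it is quoted verbatim as a known result of Dinur and Steurer~\cite{MR3238990} and used as a black box in the subsequent inapproximability arguments (Theorems~\ref{thm:inapprox-lower-bound-improved}, \ref{thm:inapprox-lower-bound}, and~\ref{thm:inapprox}). There is nothing to compare your attempt to on the paper's side beyond a citation.

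That said, your sketch is a fair high-level outline of the actual Dinur--Steurer argument: Feige's composition of \textsc{Label Cover} with partition-system gadgets is indeed the template, and the point of~\cite{MR3238990} is precisely to supply a parallel-repetition bound for projection games whose parameters allow the reduction to stay polynomial-size while still driving the soundness low enough for the $(1-\epsilon)\ln n$ gap. One caveat: you describe Feige's partition system as having ``$\log m$ partitions into two parts,'' which is the simplified Boolean version; Feige's actual construction uses partitions into $k$ parts (with $k$ a parameter tied to the number of provers), and getting the constant $1-\epsilon$ rather than a weaker constant depends on this multi-part structure. If you were to flesh this out into a proof, that is where the accounting would bite. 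But for the purposes of this paper, no proof is expected---a citation suffices.
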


An essential fact in proving the bounds from Theorem~\ref{thm:inapprox-lower-bound-known} is that the instances of {\sc Set Cover} arising in Feige's construction are such that $\ln(|S|+|{\cal F}|)\approx \ln |S|$, that is, the ratio $\ln(|S|+|{\cal F}|)/\ln |S|$ can be assumed to be arbitrarily close to~$1$. This is also true for the instances of arising in the construction proving Theorem~\ref{thm:Dinur-Steurer}.
Consequently, Theorem~\ref{thm:inapprox-lower-bound-known} can be improved as follows:

\begin{sloppypar}
\begin{thm}\label{thm:inapprox-lower-bound-improved}
For every $\rho\in \{\gama,\gamat,\gamatwo,$ $\gamaxtwo,\gamatxtwo\}$ and every $\epsilon >0$, there is no polynomial time algorithm approximating $\rho$ for $n$-vertex split graphs without isolated vertices within a factor of \hbox{$(1-\epsilon)\ln n$}, unless ${\sf P} = \NP$.
\end{thm}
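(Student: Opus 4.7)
The plan is to mimic the proofs of Theorem~\ref{thm:inapprox-lower-bound-known} verbatim, but starting from the stronger lower bound of Dinur and Steurer (Theorem~\ref{thm:Dinur-Steurer}) rather than from Feige's theorem. For each of the five parameters $\rho\in \{\gama, \gamat, \gamatwo, \gamaxtwo, \gamatxtwo\}$, the known proofs in~\cite{MR2457654,MR3027964,MR2024938,MR2960327} proceed by giving a polynomial-time reduction that, given an instance $(S,{\cal F})$ of {\sc Set Cover}, constructs a split graph $G=G(S,{\cal F})$ without isolated vertices such that (i) the value of $\rho(G)$ equals, up to a small additive constant, the size $\tau(S,{\cal F})$ of a minimum set cover, and (ii) a feasible solution to $\rho$ in $G$ can be converted in polynomial time into a set cover of ${\cal F}$ of essentially the same size, and vice versa. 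Consequently, a $c$-approximation algorithm for $\rho$ on split graphs would yield a $c$-approximation algorithm for {\sc Set Cover} on the family of instances produced by the hardness construction.

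Next, I would carry out the routine translation of the inapproximability ratio from $|S|$ to $n=|V(G)|$. Each of the reductions in~\cite{MR2457654,MR3027964,MR2024938,MR2960327} produces a graph of order $n=|S|+|{\cal F}|+O(1)$, so turning an $(1-\epsilon)\ln|S|$-hardness for {\sc Set Cover} into an $(1-\epsilon')\ln n$-hardness for $\rho$ only works if one may assume that $\ln(|S|+|{\cal F}|)=(1+o(1))\ln|S|$ for the instances produced by the hardness construction. This is exactly the property that was checked for Feige's construction in~\cite{MR2457654,MR3027964,MR2024938,MR2960327}, and, as noted immediately after Theorem~\ref{thm:Dinur-Steurer} in the excerpt, the very same property holds for the Dinur--Steurer family: one may assume without loss of generality that $|{\cal F}|\le |S|^{O(1)}$, since the hard instances arise from polynomial-time reductions and their parameters are therefore polynomially bounded in~$|S|$.

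With this in hand, the deduction is immediate. Fix $\epsilon>0$ and suppose for contradiction that some polynomial-time $(1-\epsilon)\ln n$-approximation algorithm ${\cal A}$ exists for $\rho$ on $n$-vertex split graphs without isolated vertices. Given any Dinur--Steurer instance $(S,{\cal F})$ of {\sc Set Cover}, we build the split graph $G=G(S,{\cal F})$, run ${\cal A}$ on $G$, and transform its output back into a set cover using the second direction of (ii). Combining (i), (ii) and the estimate $\ln n=(1+o(1))\ln|S|$, we would obtain, for any $\epsilon'$ with $0<\epsilon<\epsilon'<1$, a polynomial-time $(1-\epsilon')\ln|S|$-approximation for {\sc Set Cover}, contradicting Theorem~\ref{thm:Dinur-Steurer} under the assumption ${\sf P}\ne\NP$. (This last step is exactly an instance of the general reduction scheme formalized in Proposition~\ref{prp:reduction}, applied with $\rho'$ the {\sc Set Cover} optimum and the class ${\cal G}$ the split graphs produced by the reductions.)

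The main obstacle is pure bookkeeping: verifying that the Dinur--Steurer hard instances indeed satisfy the size condition $\ln(|S|+|{\cal F}|)=(1+o(1))\ln|S|$, exactly as Feige's do. Once this has been observed---and it is asserted in the paragraph immediately preceding the theorem---no further combinatorial work is required: the reductions to $\gama$, $\gamat$, $\gamatwo$, $\gamaxtwo$ and $\gamatxtwo$ on split graphs from~\cite{MR2457654,MR3027964,MR2024938,MR2960327} go through unchanged, and Theorem~\ref{thm:inapprox-lower-bound-improved} follows by replacing every invocation of Feige's theorem in those proofs by an invocation of Theorem~\ref{thm:Dinur-Steurer}.
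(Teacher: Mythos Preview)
Your proposal is correct and matches the paper's own argument essentially verbatim: the paper does not give a separate proof of Theorem~\ref{thm:inapprox-lower-bound-improved} but simply observes, in the paragraph preceding it, that the Dinur--Steurer hard instances satisfy the same size condition $\ln(|S|+|{\cal F}|)/\ln|S|\to 1$ as Feige's, so the reductions from~\cite{MR2457654,MR3027964,MR2024938,MR2960327} carry over with the weaker hypothesis ${\sf P}\ne\NP$. Your write-up spells out the mechanics of this substitution more explicitly than the paper does, but the approach is the same.
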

\end{sloppypar}}

In particular, the above results imply that the decision variants of the corresponding optimization problems are \NP-complete.

\begin{sloppypar}
We are not aware of inapproximability results for any of the invariants \hbox{$\rho\in \{\gamaR, \gamawtwo, \gamasettwo, \gamatsettwo, \rgamatwo, \rgamawtwo, \rgamaxtwo, \rgamatxtwo\}$.} (Recall that invariants $\rgamatwo$, $\rgamaxtwo$, and $\rgamatxtwo$ are, to the best of our knowledge, considered for the first time in this paper.) The following \NP-completeness results for some of these parameters are available in the literature:
\begin{itemize}
  \item The \NP-completeness of Roman domination ($\displaystyle{\gamma}\!\:_R$) was proved by Dreyer in~\cite{MR2701485}. (The problem was already claimed to be \NP-complete in~\cite{cdhh-2004}, referring to a private communication with A.A.~McRae.)
  \item The weak $2$-domination ($\gamawtwo$) and the rainbow weak $2$-domination ($\rgamawtwo$) problems were proved \NP-complete by Bre\v{s}ar and Kraner \v{S}umenjak in~\cite{bks-2007}.
  \item The \NP-completeness of $\{2\}$-domination (${\displaystyle\gamma}\!\:_{{\{2\}}}$) was proved
  by Gairing et al.~in~\cite{GHKM2003} (in the more general context of $\{k\}$-domination).
  \item The \NP-completeness of rainbow double domination ($\rgamaxtwo(G)$; for graphs without isolated vertices) follows from the analogous result due to Hedetniemi et al.~\cite{MR2500476} for disjoint domination (cf.~Proposition~\ref{prp:disjoint-domination}).
\end{itemize}
We are not aware of any published hardness results about total $\{2\}$-domination ($\gamasettwo$).
\end{sloppypar}

In the rest of this subsection, we strengthen the above \NP-completeness results by showing that all the domination parameters studied in this paper, except for the rainbow total double domination number, admit an inapproximability bound of the form $\Omega(\ln n)$ for $n$-vertex split graphs, {unless ${\sf P} = {\sf NP}$.} Before doing that, we show that for the rainbow total double domination number ($\rgamatxtwo$; recall that this is the topmost parameter in the diagram in Fig.~\ref{fig:Hasse-quotient}), the situation is even worse.
We say that a graph $G$ is $\widetilde{\displaystyle\gamma}\!\:_{t\!{\scriptstyle \times} \! 2}$-feasible
if $\rgamatxtwo(G)$ is finite (cf.~Proposition~\ref{prp:rgamatxtwo} on p.~\pageref{prp:rgamatxtwo}).

\begin{thm}
There is no polynomially computable function $f$ such that there exists an $f(n)$-approximation algorithm for
rainbow total double domination on $n$-vertex
$\widetilde{\displaystyle\gamma}\!\:_{t\!{\scriptstyle \times} \! 2}$-feasible
split graphs, unless
 ${\sf P} = {\sf NP}$.
\end{thm}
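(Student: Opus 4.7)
The plan is to reduce from the ${\sf NP}$-complete \textsc{Hypergraph $2$-Colorability} problem, using the characterization of Proposition~\ref{prp:rgamatxtwo}: a graph is $\rgamatxtwo$-feasible exactly when its vertex set admits a partition into two (disjoint) total dominating sets. Given a hypergraph $H=(V,E)$ with all hyperedges of size at least $2$, I would construct the split graph $G_H$ whose clique is $C=V\cup\{x_1,x_2,x_3,x_4\}$ (with four new auxiliary vertices) and whose independent set is $I=E$, where each $v\in V$ is joined to each $e\in E$ such that $v\in e$. This graph has $|V|+|E|+4$ vertices and is clearly constructible in polynomial time.

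I claim that $G_H$ is $\rgamatxtwo$-feasible if and only if $H$ is $2$-colorable. For the ``if'' direction, given a proper $2$-coloring $(V_0,V_1)$ of $H$, the pair $A=V_0\cup\{x_1,x_2\}$, $B=V_1\cup\{x_3,x_4\}\cup E$ partitions $V(G_H)$ into two total dominating sets: every clique vertex has a neighbor in $\{x_1,x_2\}\subseteq A$ and in $\{x_3,x_4\}\subseteq B$ through clique edges, and every $e\in E$ has a neighbor in $A\cap V=V_0$ and in $B\cap V=V_1$ because $e$ is non-monochromatic. Conversely, if $(A,B)$ is any partition of $V(G_H)$ into two total dominating sets, then $V_0:=A\cap V$ and $V_1:=B\cap V$ partition $V$, and since the only neighbors of any $e\in I$ lie in $V$, the fact that $e$ must meet both $A$ and $B$ forces $V_0\cap e\ne\emptyset$ and $V_1\cap e\ne\emptyset$. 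Hence $(V_0,V_1)$ is a proper $2$-coloring of $H$. In particular, deciding $\rgamatxtwo$-feasibility of split graphs is ${\sf NP}$-hard.

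To promote this into the desired inapproximability, suppose for contradiction that $f$ is polynomially computable and $\mathcal{A}$ is an $f(n)$-approximation algorithm for $\rgamatxtwo$ on $n$-vertex $\rgamatxtwo$-feasible split graphs. Given $H$, I would build $G_H$, run $\mathcal{A}$ on it in polynomial time, and then check in polynomial time whether the output is actually a valid pair of disjoint total dominating sets of $G_H$. On a feasible input, $\mathcal{A}$ is required to return such a valid pair; on an infeasible input no such pair exists, so the verification must fail. Hence the verification outcome decides $\rgamatxtwo$-feasibility of $G_H$, and thus $2$-colorability of $H$, in polynomial time, forcing ${\sf P}={\sf NP}$. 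Note that the approximation factor $f(n)$ plays no role in this argument: the only feature of $\mathcal{A}$ we use is that it must produce \emph{some} valid solution whenever one exists.

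The main obstacle is the design of the split graph $G_H$ so that its $\rgamatxtwo$-feasibility encodes precisely the $2$-colorability of $H$. Adding the four auxiliary vertices $x_1,\ldots,x_4$ to the clique (split evenly between the two would-be dominating sets) is what guarantees that total domination of the clique side is automatic, no matter how the vertices of $V$ are distributed between $A$ and $B$, thereby cleanly reducing the feasibility question to the non-monochromaticity condition on the hyperedges.
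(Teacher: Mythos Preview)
Your argument is essentially the same reduction from \textsc{Hypergraph $2$-Colorability} that the paper gives: build a split graph whose independent side is the hyperedge set and whose clique side contains the ground set, so that $\rgamatxtwo$-feasibility encodes exactly non-monochromaticity of every hyperedge. The only structural difference is cosmetic: where you add four auxiliary clique vertices $x_1,\dots,x_4$ to guarantee total domination of the clique side regardless of how $V$ is split, the paper instead takes $C=V$ with no auxiliaries and observes that one may assume every $2$-coloring has $|A|,|B|\ge 2$ (else the instance is polynomially solvable). Both devices serve the same purpose and either works.

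One technical point you should make explicit: you write ``run $\mathcal{A}$ on it in polynomial time'', but $\mathcal{A}$ is only promised to be a polynomial-time approximation algorithm on \emph{feasible} inputs; on an infeasible $G_H$ there is no a priori guarantee that $\mathcal{A}$ halts or outputs anything sensible. The paper handles this by fixing a polynomial bound $p(n)$ on $\mathcal{A}$'s running time on feasible instances, simulating $\mathcal{A}$ for at most $p(n)$ steps, and declaring the input infeasible if $\mathcal{A}$ has not produced a valid solution by then. You should add this clock-bounding step; once you do, your proof is complete.
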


\begin{proof}
Suppose for a contradiction that there exists a polynomially computable function $f$ such that there exists an $f(n)$-approximation algorithm for
rainbow total double domination on $n$-vertex
$\widetilde{\displaystyle\gamma}\!\:_{t\!{\scriptstyle \times} \! 2}$-feasible
split graphs.
We will show that this implies  ${\sf P} = {\sf NP}$, by designing a polynomial time algorithm for the \NP-complete
 {\sc Hypergraph $2$-Colorability} problem~\cite{MR519066}, which asks whether a given hypergraph is $2$-colorable.
 A {\it hypergraph} ${\cal H}$ is a pair $(V,{\cal E})$ where $V$ is a finite set and ${\cal E}$ is a set of subsets of $V$.
 A hypergraph is said to be {\it $2$-colorable} if its vertex set $V$ admits a partition into two independent sets $A$ and $B$, where a set $X\subseteq V$ is {\it independent} if it does not contain any hyperedge $e\in {\cal E}$.
 We may assume that $|A|\ge 2$ and $|B|\ge 2$ in every partition as above since otherwise the problem can be solved in polynomial time.

Given an input ${\cal H} = (V,{\cal E})$ to the {\sc Hypergraph $2$-Colorability} problem,
construct the split graph $G = (V',E)$ with split partition $(C,I)$ where
$C = V$, $I = {\cal E}$, and there is an edge in $G$ between $v\in C$ and $e\in I$  if and only if
$v\in e$. Clearly, $G$ can be constructed from ${\cal H}$ in polynomial time.

We claim that ${\cal H}$ is $2$-colorable if and only if $G$ is $\widetilde{\displaystyle\gamma}\!\:_{t\!{\scriptstyle \times} \! 2}$-feasible.
First, suppose that ${\cal H}$ is $2$-colorable, and let $\{A,B\}$  be a partition of $V$ into two independent sets.
Then, the function $g:V(G)\to \{\emptyset,\{a\},\{b\}\}$ defined by
$$g(v) = \left\{
          \begin{array}{ll}
            \{a\}, & \hbox{if $v\in A$;} \\
            \{b\}, & \hbox{if $v\in B$;} \\
            \emptyset, & \hbox{otherwise.}
          \end{array}
        \right.$$
 is a rainbow total double dominating function of $G$. Indeed, the assumption
 $|A|\ge 2$ and $|B|\ge 2$ implies that
 $g_{\cup}(N(v)) = \{a,b\}$ for all $v\in C$, while the fact that
 $A$ and $B$ are both independent in ${\cal H}$ implies that $g$ also dominates vertices in $I$.
 It follows that $G$ is   $\widetilde{\displaystyle\gamma}\!\:_{t\!{\scriptstyle \times} \! 2}$-feasible.
Conversely, suppose that $G$ is $\widetilde{\displaystyle\gamma}\!\:_{t\!{\scriptstyle \times} \! 2}$-feasible, with a
rainbow total double dominating function  $g:V(G)\to \{\{a\},\{b\},\emptyset\}$.
Modify $g$ if necessary by setting $g(v) = \{a\}$ for every $v\in C$ with $g(v) = \emptyset$; clearly, the so obtained function is still
a rainbow total double dominating function of $G$. Moreover, the sets
 $A = \{v\in V: g(v) = a\}$ and
 $B = \{v\in V: g(v) = b\}$ form a partition of $V$, the vertex set of ${\cal H}$.
Since  $g_{\cup}(N(v)) = \{a,b\}$ for all $v\in I$, each of the sets $A$ and $B$ is independent in ${\cal H}$, and thus ${\cal H}$ is $2$-colorable.

Now, let $n = |V'|$, and let ${\cal A}$ be an $f(n)$-approximation algorithm for rainbow total double domination on $n$-vertex $\widetilde{\displaystyle\gamma}\!\:_{t\!{\scriptstyle \times} \! 2}$-feasible split graphs. We know that ${\cal A}$ computes a rainbow total double dominating function on
$\widetilde{\displaystyle\gamma}\!\:_{t\!{\scriptstyle \times} \! 2}$-feasible
 split graphs, but if the input graph is not of this form, there is no guarantee about what ${\cal A}$ computes or whether it even halts. By definition ${\cal A}$ runs in polynomial time on $n$-vertex $\widetilde{\displaystyle\gamma}\!\:_{t\!{\scriptstyle \times} \! 2}$-feasible
 split graphs, say its running time is bounded by a polynomial $p(n)$.

The polynomial time algorithm that decides whether ${\cal H}$ is $2$-colorable goes as follows.
\begin{enumerate}
  \item Construct the split graph $G$ as specified above.
  \item Compute $n = |V(G)|$ and $f(n)$, and let ${\cal A}$ be an $f(n)$-approximation algorithm
 for rainbow total double domination on $n$-vertex $\widetilde{\displaystyle\gamma}\!\:_{t\!{\scriptstyle \times} \! 2}$-feasible split graphs.
  \item Run ${\cal A}$ on $G$ for at most $p(n)$ steps.
  \item If ${\cal A}$ did not compute anything, then $G$ is not
  $\widetilde{\displaystyle\gamma}\!\:_{t\!{\scriptstyle \times} \! 2}$-feasible.
We conclude that ${\cal H}$ is not $2$-colorable.
  \item If ${\cal A}$ computed something, then check whether what it computed is a rainbow total double dominating function on $G$.

   If it is, then $G$ is $\widetilde{\displaystyle\gamma}\!\:_{t\!{\scriptstyle \times} \! 2}$-feasible, and we conclude that ${\cal H}$ is  $2$-colorable.
   (In this case we also have that the total weight of the computed function is at most $f(n)\rgamatxtwo(G)$, but we will not need this fact.)

   If it is not, then $G$ is not $\widetilde{\displaystyle\gamma}\!\:_{t\!{\scriptstyle \times} \! 2}$-feasible, and we conclude that ${\cal H}$ is not $2$-colorable.
\end{enumerate}
It is clear that the algorithm runs in polynomial time. Its correctness follows from the correctness of ${\cal A}$ and from the fact that
${\cal H}$ is $2$-colorable if and only if $G$ is
$\widetilde{\displaystyle\gamma}\!\:_{t\!{\scriptstyle \times} \! 2}$-feasible.
Thus, the above algorithm efficiently solves the \NP-complete
 {\sc Hypergraph $2$-Colorability} problem, implying that
${\sf P} = {\sf NP}$. This completes the proof.\end{proof}

\medskip
\begin{sloppypar}
We now turn out attention to the remaining parameters. Proposition~\ref{prp:reduction} and the discussion following it show that
in order to prove an inapproximability bound of the form $\Omega(\ln n)$ for each of the remaining considered parameters, namely
\hbox{$\rho\in \{\gamawtwo, \gamasettwo, \gamatsettwo, \gamaR, \rgamatwo, \rgamawtwo, \rgamaxtwo\}$,} it suffices to show an inapproximability bound of the same type for just one parameter in each of the bottom three equivalence classes
in the diagram of Fig.~\ref{fig:Hasse-quotient}. As mentioned above, such bounds already exist, even for the class of split graphs, for any
$\rho\in \{\gama,\gamat, \gamatwo,\gamaxtwo, \gamatxtwo\}$, which takes care of the invariants appearing in the
bottom two equivalence classes in Fig.~\ref{fig:Hasse-quotient}. We summarize this in the following theorem.
\end{sloppypar}

\begin{thm}\label{thm:inapprox-lower-bound}
For every $\rho\in \{\gamawtwo, \gamasettwo, \gamatsettwo, \gamaR, \rgamawtwo, \rgamasettwo, \rgamatsettwo\}$ and every $\epsilon >0$, there is no polynomial time algorithm approximating $\rho$ for $n$-vertex split graphs without isolated vertices within a factor of \hbox{$(1/2-\epsilon)\ln n$}, {unless ${\sf P} = {\sf NP}$.}
\end{thm}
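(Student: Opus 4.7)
The plan is to derive the stated inapproximability bounds from the analogous known bounds for $\gamma$ and $\gamat$ (Theorem~\ref{thm:inapprox-lower-bound-improved}) by invoking Proposition~\ref{prp:reduction}. The reductions in question do not modify the input graph at all; they only post-process the output of an approximation algorithm. In particular, they preserve the class of $n$-vertex split graphs without isolated vertices, so the value of $n$ appearing in the approximation factor is unchanged.

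First I would dispose of the two trivial cases $\rho=\rgamasettwo$ and $\rho=\rgamatsettwo$. Since $\rgamasettwo(G)=2\gamma(G)$ and $\rgamatsettwo(G)=2\gamat(G)$ hold for every relevant graph $G$ (as already observed in Section~\ref{sec:definitions}), any $c$-approximation algorithm for $\rgamasettwo$ (respectively, $\rgamatsettwo$) yields a $c$-approximation for $\gamma$ (respectively, $\gamat$) by extracting the support of one of the two labels from the computed rainbow function. Hence a $(1/2-\epsilon)\ln n$-approximation would produce a $(1/2-\epsilon)\ln n$-approximation -- in fact even better -- for $\gamma$ or $\gamat$, contradicting Theorem~\ref{thm:inapprox-lower-bound-improved}.

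For $\rho\in\{\gamawtwo,\gamasettwo,\gamaR,\rgamawtwo\}$, the key observation is that the double inequality $\gamma(G)\le \rho(G)\le 2\gamma(G)$ holds on every graph $G$ on which $\rho$ is defined: the lower bound is provided either by the Hasse diagram of Fig.~\ref{fig:Hasse} or by Propositions~\ref{prp:gamma-gamma-set2} and~\ref{prp:gamma-gammaR}, while the upper bound is entry $(\rho,\gamma)$ of Table~\ref{table-bounds}. Moreover, each of the constructive proofs in Section~\ref{sec:proofs-bounds} (and the proof of the $\rho\le\gamawtwo$ inequalities in Proposition~\ref{prp:Hasse-relations}) yields a polynomial time procedure that, given any feasible function $g$ for $\rho$ on $G$, extracts a dominating set $D$ of $G$ with $|D|\le g(V)$: for $\rho\in\{\gamawtwo,\gamaR,\rgamawtwo\}$ this is simply the set of vertices of positive weight (or nonempty label), and for $\rho=\gamasettwo$ it is the set $D$ built in the proof of Proposition~\ref{prp:gamma-gamma-set2}. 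Applying Proposition~\ref{prp:reduction} with $c_1=1$ and $c_2=2$, any $c$-approximation for $\rho$ on $n$-vertex split graphs without isolated vertices gives a $2c$-approximation for $\gamma$ on the same class; so a $(1/2-\epsilon)\ln n$-approximation for $\rho$ would yield a $(1-2\epsilon)\ln n$-approximation for $\gamma$, contradicting Theorem~\ref{thm:inapprox-lower-bound-improved} (applied with $\epsilon':=2\epsilon$). The remaining case $\rho=\gamatsettwo$ is identical, using $\gamat(G)\le\gamatsettwo(G)\le 2\gamat(G)$ (Propositions~\ref{prp:gamma-t-gamma-tset2} and~\ref{prp:gamma-tset2-gamma-t}) and extracting a total dominating set of weight at most $g(V)$ from any total $\{2\}$-dominating function $g$, which again contradicts Theorem~\ref{thm:inapprox-lower-bound-improved} on $\gamat$.

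Conceptually, there is no real obstacle: the work is spread across Table~\ref{table-bounds}, the constructive proofs in Section~\ref{sec:proofs-bounds}, and Proposition~\ref{prp:reduction}. The only thing to be careful about is the factor-$2$ loss incurred by the bounds $\rho(G)\le 2\gamma(G)$ and $\gamatsettwo(G)\le 2\gamat(G)$, which is exactly the reason why the resulting inapproximability constant in the theorem is $1/2-\epsilon$ rather than $1-\epsilon$.
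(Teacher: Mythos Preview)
Your approach is essentially identical to the paper's: both transfer the known inapproximability of $\gamma$ and $\gamat$ to the listed parameters via Proposition~\ref{prp:reduction} and the sandwich inequalities $\gamma\le\rho\le 2\gamma$ (respectively $\gamat\le\rho\le 2\gamat$), with the support-extraction map supplying the required polynomial-time transformation. One small bookkeeping slip: when you invoke Proposition~\ref{prp:reduction} with the new parameter playing the role of $\rho$ and $\gamma$ playing the role of $\rho'$, the hypothesis $c_1\rho\le\rho'\le c_2\rho$ forces $c_1=1/2$ and $c_2=1$ (not $c_1=1$, $c_2=2$), since $\rho\le\gamma$ is false in general; the ratio $c_2/c_1=2$ and hence your conclusion $c\mapsto 2c$ are unaffected.
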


\begin{proof}
Recall from Section~\ref{sec:comparison} that for every graph $G$ without isolated vertices, we have
$\gama(G)\le \gamawtwo(G)\le \rgamawtwo(G)\le\gamaR(G)\le \rgamasettwo(G) = 2\gama(G)$ and
$\gamat(G)\le \gamasettwo(G)\le \gamatsettwo(G)\le \rgamatsettwo(G) = 2\gamat(G)\,.$
Thus, the theorem follows from the inapproximability bound for domination (resp., total domination), see
{Theorem~\ref{thm:inapprox-lower-bound-improved}}, the above inequalities, and Proposition~\ref{prp:reduction}.
We prove the statement formally only for the weak $2$-domination number ($\gamawtwo$); the
proofs for the other parameters are analogous.

Let ${\cal G}$ be the class of split graphs without isolated vertices and suppose that
there is some $\epsilon >0$ such that there is a polynomial time algorithm approximating the weak $2$-domination number
on $n$-vertex graphs in ${\cal G}$ within a factor of
\hbox{$(1/2-\epsilon)\ln n$}. For every graph $G$, we have
$\frac{1}{2}\gamawtwo(G)\le \gama(G)\le \gamawtwo(G)$. Moreover,
for every weak $2$-dominating function $f$ of $G$, the set $\{v\in V(G): f(v)>0\}$ is a dominating function
of $G$ of weight at most $f(V(G))$. Therefore, Proposition~\ref{prp:reduction} applies with $c_1 = 1/2$, $c_2 = 1$, and hence
there is a polynomial time algorithm approximating the domination number on $n$-vertex graphs in ${\cal G}$ within a factor of
\hbox{$(1-2\epsilon)\ln n$}. {By Theorem~\ref{thm:inapprox-lower-bound-improved},
this is only possible if ${\sf P} = {\sf NP}$.}
\end{proof}

We also explicitly state the following consequence of Theorem~\ref{thm:inapprox-lower-bound} for total $\{2\}$-domination ($\gamatsettwo$),
which does not seem to be yet available in the literature.

\begin{cor}
The decision version of the total $\{2\}$-domination problem is \NP-complete.
\end{cor}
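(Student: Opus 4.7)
The plan is to split the proof into two standard parts. First, I would verify membership in \NP: any total $\{2\}$-dominating function $f: V(G) \to \{0, 1, 2\}$ of total weight at most $k$ serves as a polynomial-size certificate, since the constraints $f(V(G)) \le k$ and $f(N(v)) \ge 2$ for every $v \in V(G)$ can be verified in polynomial time given such an $f$.

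Second, I would deduce \NP-hardness from Theorem~\ref{thm:inapprox-lower-bound}. If the decision version were solvable in polynomial time, then binary search over the range $k \in \{0, 1, \dots, 2|V(G)|\}$ would yield an exact polynomial-time algorithm for computing $\gamatsettwo(G)$ on split graphs without isolated vertices, which is in particular a $1$-approximation. Fixing any $\epsilon < 1/2$ and restricting to $n$-vertex split graphs with $n$ large enough that $(1/2 - \epsilon)\ln n > 1$ then yields a direct contradiction with Theorem~\ref{thm:inapprox-lower-bound} unless ${\sf P} = {\sf NP}$.

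The main subtlety to be addressed is lifting the conclusion ``the decision problem is not in ${\sf P}$ unless ${\sf P} = {\sf NP}$'' to the stronger statement of many-one \NP-hardness claimed in the corollary. For this, I would appeal to the structure of the proof of Theorem~\ref{thm:inapprox-lower-bound}, which applies Proposition~\ref{prp:reduction} to the sandwich bound $\gamat(G) \le \gamatsettwo(G) \le 2\gamat(G)$ together with the efficient conversions $D \mapsto 2 \cdot \mathbf{1}_D$ (from a total dominating set to a total $\{2\}$-dominating function) and $f \mapsto \{v : f(v) \ge 1\}$ (in the reverse direction). Composing this with the gap-preserving Karp reduction from {\sc Set Cover} to total domination on split graphs underlying Theorem~\ref{thm:inapprox-lower-bound-improved} produces a Karp reduction from {\sc Set Cover} to the decision version of total $\{2\}$-domination on split graphs, thereby establishing \NP-completeness in the standard sense.
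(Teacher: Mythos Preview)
Your proposal is correct and follows the same route as the paper, which simply records the corollary as an immediate consequence of Theorem~\ref{thm:inapprox-lower-bound} without further argument. Your additional care in distinguishing ``not in {\sf P} unless ${\sf P}={\sf NP}$'' from many-one {\sf NP}-hardness, and in tracing the underlying Karp reductions through the sandwich bound $\gamat(G)\le\gamatsettwo(G)\le 2\gamat(G)$, is a legitimate refinement that the paper glosses over, but it does not constitute a different approach.
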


The remaining equivalence class from Fig.~\ref{fig:Hasse-quotient} contains two parameters, namely rainbow $2$-domination ($\rgamatwo$) and rainbow double domination ($\rgamaxtwo$). { Using a reduction from {\sc Set Cover}, we now prove the inapproximability bounds for the rainbow $2$-domination ($\rgamatwo$) and the rainbow double domination ($\rgamaxtwo$) problems in split graphs. As discussed above, it would suffice to prove a bound for only one of the two parameters. We give a direct proof for both parameters, since with almost no additional work, we save a multiplicative factor of $2$ in one of the two bounds compared to the bounds we would obtain using the above approach.}

\begin{thm}\label{thm:inapprox}
For every $\rho\in \{\rgamatwo, \rgamaxtwo\}$ and every $\epsilon >0$, there is no polynomial time \hbox{$(1-\epsilon)\ln n$}-approximation algorithm for computing $\rho$ on $n$-vertex split graphs, unless {{\sf P} = {\sf NP}}.
\end{thm}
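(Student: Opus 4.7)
The plan is to reduce from {\sc Set Cover}, exploiting the $(1-\epsilon)\ln|S|$ inapproximability bound of Dinur and Steurer (Theorem~\ref{thm:Dinur-Steurer}, with $|S|$ the size of the ground set). As already noted in the paper before Theorem~\ref{thm:inapprox-lower-bound-improved}, we may assume that the hard Set Cover instances arising from the Dinur--Steurer construction satisfy $\ln(|S|+|{\cal F}|)=(1+o(1))\ln|S|$ and have optimum $\tau^{\ast}\to\infty$; both facts are needed for the asymptotic bookkeeping below.

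Given a Set Cover instance $(S,{\cal F})$, I construct a split graph $G$ on vertex set $V(G)=S\cup{\cal F}\cup\{u\}$, where $u$ is a fresh vertex. The split partition is clique $C:={\cal F}\cup\{u\}$ and independent set $I:=S$; beyond the clique edges on $C$, each $s\in S$ is adjacent to every $F\in{\cal F}$ with $s\in F$ and also to $u$ (so $u$ is adjacent to every other vertex of $G$). Set $n:=|V(G)|=|S|+|{\cal F}|+1$ and $\tau^{\ast}:=\tau^{\ast}(S,{\cal F})$.

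The heart of the argument is the double inequality
\[
\tau^{\ast}\;\le\;\rgamatwo(G)\;\le\;\rgamaxtwo(G)\;\le\;\tau^{\ast}+1.
\]
For the upper bound, take a minimum set cover ${\cal F}'\subseteq{\cal F}$ and define $f\colon V(G)\to\{\emptyset,\{a\},\{b\}\}$ by $f(F)=\{a\}$ for $F\in{\cal F}'$, $f(u)=\{b\}$, and $f(v)=\emptyset$ elsewhere. A direct check---every $s\in S$ sees some $F\in{\cal F}'$ containing it together with $u$ in $N[s]$, and every clique vertex sees both $u$ and some element of ${\cal F}'$ inside $C$---shows that $f$ is a rainbow double dominating function of weight $\tau^{\ast}+1$; since any rainbow double dominating function is in particular rainbow $2$-dominating, the same bound holds for $\rgamatwo(G)$. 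For the lower bound, from any rainbow $2$-dominating function $f$ with $A:=f^{-1}(\{a\})$, $B:=f^{-1}(\{b\})$, I form
\[
{\cal F}''\;:=\;\bigl((A\cup B)\cap{\cal F}\bigr)\;\cup\;\{F_s : s\in S\cap(A\cup B)\},
\]
where $F_s\in{\cal F}$ is an arbitrary set containing $s$ (we may assume every element of $S$ belongs to some set in ${\cal F}$, else Set Cover is infeasible). Every $s\in S$ with $f(s)=\emptyset$ must have neighbours in both $A$ and $B$, and since $u$ lies in at most one of $A,B$, at least one side must contribute a genuine set $F\in{\cal F}$ with $s\in F$, which belongs to ${\cal F}''$; the explicitly added $F_s$ cover the remaining $s$. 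A size count using that $u$ contributes to $|A|+|B|$ but nothing to ${\cal F}''$ gives $|{\cal F}''|\le|A|+|B|=f(V(G))$, whence $\tau^{\ast}\le\rgamatwo(G)$; the remaining inequality $\rgamatwo\le\rgamaxtwo$ is already recorded in the Hasse diagram of Fig.~\ref{fig:Hasse}.

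Once the double inequality is in place, any polynomial-time $(1-\epsilon)\ln n$-approximation algorithm for $\rho\in\{\rgamatwo,\rgamaxtwo\}$ on $n$-vertex split graphs would, via the constructive extraction of ${\cal F}''$, produce in polynomial time a set cover of size at most $(1-\epsilon)\ln n\cdot(\tau^{\ast}+1)$. Using $\tau^{\ast}\to\infty$ and $\ln n=(1+o(1))\ln|S|$, the resulting approximation ratio for {\sc Set Cover} is $(1-\epsilon+o(1))\ln|S|$, which for sufficiently large instances contradicts Theorem~\ref{thm:Dinur-Steurer} applied with $\epsilon/2$ in place of $\epsilon$, forcing $\PP=\NP$. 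The main obstacle I anticipate is precisely this asymptotic bookkeeping: one must verify that the Dinur--Steurer hard instances indeed satisfy $\ln|{\cal F}|=(1+o(1))\ln|S|$ and $\tau^{\ast}=\omega(1)$ simultaneously, so that both the additive $+1$ in the key inequality and the replacement of $\ln|S|$ by $\ln n$ become negligible relative to the $(1-\epsilon)$ gap we wish to eliminate.
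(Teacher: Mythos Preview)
Your proof is correct and follows the same high-level strategy as the paper (an approximation-preserving reduction from {\sc Set Cover} to the two rainbow parameters on split graphs), but the gadget is genuinely different. The paper builds a split graph $G_J$ with clique $\{a_F,b_F:F\in{\cal F}\}$ (two copies of ${\cal F}$) and independent set $S_1\cup S_2\cup S_3$ (three copies of $S$), and proves the exact equality $\rgamatwo(G_J)=\rgamaxtwo(G_J)=2\tau^\ast$; the tripling of $S$ is used in the lower-bound direction to force labels off the independent set via a weight-shifting argument. Your construction is leaner: a single copy of ${\cal F}$, a single copy of $S$, and one universal vertex $u$, yielding the sandwich $\tau^\ast\le\rho(G)\le\tau^\ast+1$. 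Your lower-bound extraction (``$u$ can supply only one colour, so every $s$ with $f(s)=\emptyset$ is covered by a genuine $F\in{\cal F}$'') is more direct than the paper's and avoids the copy machinery.

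What this buys each side: the paper's exact value $2\tau^\ast$ means the final bookkeeping only needs $\ln(|S|+|{\cal F}|)=(1+o(1))\ln|S|$, whereas your additive $+1$ additionally requires $\tau^\ast\to\infty$ on the hard instances. You correctly flag this as the main obstacle; it is a standard fact (bounded-optimum {\sc Set Cover} is solvable in polynomial time by brute force, so the hard instances must have unbounded optimum), so the argument goes through. In short: your route is simpler and equally valid, at the modest price of invoking one extra routine property of the Dinur--Steurer instances.
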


\begin{sloppypar}
\begin{proof}
Fix $\rho\in \{\rgamatwo, \rgamaxtwo\}$ and suppose for some $\epsilon >0$, there is a polynomial time \hbox{$(1-\epsilon)\ln n$}-approximation algorithm, say ${\cal A}$, for computing $\rho$ on $n$-vertex split graphs.

Let $J= (S,{\cal F})$ be an instance to the {\sc Set Cover} problem. First, note that we may assume that
\begin{equation}\label{eq:set-cover}
\ln 3 + \ln(|S|+|{\cal F}|) \le (1+\epsilon/2)\ln(|S|+|{\cal F}|)\le (1+\epsilon)\ln|S|\,.
\end{equation}
Indeed, if the first inequality above is violated, then $\ln(|S|+|{\cal F}|)$ is bounded by $2\ln 3/ \epsilon$ and the problem can be solved in constant time. The second inequality follows from the fact that the ratio $\ln(|S|+|{\cal F}|)/(\ln |S|)$ can be made arbitrarily close to $1$
{(as remarked right after Theorem~\ref{thm:Dinur-Steurer}).}

Consider the split graph $G_J = (V,E)$ with split partition $(C,I)$ where
$C = A\cup B$ with $A =  \{a_F\mid F\in {\cal F}\}$,
$B= \{b_F\mid F\in {\cal F}\}$,
$I = S_1\cup S_2\cup S_3$ with $S_j = \{s_j: s\in S\}$ for $j\in \{1,2,3\}$,
and there is an edge between
$F_i$
for $i\in \{a,b\}$, $F\in {\cal F}$
and $s_j\in S_j$ for $j\in \{1,2,3\}$,
$s\in S$ if and only if $s\in F$.

Graph $G_J$ has $3|S|+2|{\cal F}|\le 3(|S|+|{\cal F}|)\le |S|^{1+\epsilon}$ vertices and can be computed in polynomial time from $J$.
Let ${\it OPT}$ denote the minimum size of a set cover for ${\cal F}$. First, we prove the following claim.

\medskip
\noindent{\bf Claim:} $\rgamatwo(G_J) = \rgamaxtwo(G_J)= 2\cdot{\it OPT}$.

\medskip
\noindent{\it Proof of claim:} The inequality $\rgamatwo(G_J) \le  \rgamaxtwo(G_J)$ always holds (see Table~\ref{table-bounds}).
Thus, it remains to prove $\rgamaxtwo(G_J)\le 2\cdot{\it OPT}$ and $2\cdot{\it OPT}\le \rgamatwo(G_J)$.

We first prove that $\rgamaxtwo(G_J)\le 2\cdot{\it OPT}$.
First, let ${\cal F'}$ be a minimum set cover for ${\cal F}$.
Consider the function $f:V(G_J)\to\{\emptyset,\{a\},\{b\}\}$ defined as follows:
$$f(v) = \left\{
           \begin{array}{ll}
             \{a\}, & \hbox{if $v = a_F\in A$ and $F\in {\cal F}'$;} \\
             \{b\}, & \hbox{if $v = b_F\in B$ and $F\in {\cal F}'$;} \\
             \emptyset, & \hbox{otherwise.}
           \end{array}
         \right.
$$
Clearly, $f(V(G_J)) = 2|{\cal F}'| = 2\cdot{\it OPT}$. Thus, to prove that $\rgamaxtwo(G_J)\le  2\cdot{\it OPT}$, it suffices to check that
$f$ is a rainbow double dominating function of $G_J$, that is, that
$f_\cup(N[v]) = \{a,b\}$ holds for all $v\in V$.
If $v\in I$, then $v= s_j$ for some $s\in S$ and some $j\in \{1,2,3\}$. There exists some $F\in {\cal F}'$ with $s\in F$.
This implies that $a_F$ and $b_F$ are adjacent to $s_j$ in $G_J$, and by construction these two vertices are labeled $\{a\}$ and $\{b\}$, respectively.
If $v\in C$, then we have either $f(v)  = \{a\}$ or $f(v)  = \{b\}$ or $f(v)  = \emptyset$.
If $f(v)  = \{a\}$ then since $C$ is a clique, any vertex
$b_F$ with $F\in {\cal F}'$ is a neighbor of $v$ labeled $\{b\}$.
The case when $f(v)  = \{b\}$ is symmetric. Finally, if
$f(v) = \emptyset$, then we similarly observe that $v$ is adjacent
to both a vertex of the form $a_F$ and a vertex of the form $b_F$ (with $F\in {\cal F}'$).
It follows that $f$ is a rainbow $\{2\}$-dominating function of $G_J$, which implies $\rgamaxtwo(G_J)\le  2\cdot{\it OPT}$.

Now, we prove that ${\it OPT}\le \rgamatwo(G_J)/2$.
Let $f:V(G_J)\to\{\emptyset,\{a\},\{b\}\}$ be a minimum
rainbow $2$-dominating function.
We therefore have $f_\cup(N(v)) = \{a,b\}$ for all $v\in V(G_J)$ with $f(v) = \emptyset$.
First, we will show that we have
$f(v) = \emptyset$ for all $v\in I$.
Suppose for a contradiction that $f(v) \neq\emptyset$ for some $v\in I$.
By minimality of $f$, the function obtained by relabeling
$v$ to $\emptyset$ is not a rainbow $2$-dominating function of $G_J$, which implies that
$v$ does not have both labels $\{a\}$ and $\{b\}$ in its neighborhood. Assume that
$a\not\in f_{\cup}N(v)$ (the other case is symmetric). Let $s\in S$ and $j\in \{1,2,3\}$ be such that $v = s_j$.
Then $a\not\in f_{\cup}N(u)$ for all $u\in \{s_1,s_2,s_3\}$, which implies that
$f(s_j) \neq\emptyset$ for all $j\in \{1,2,3\}$. Let $F\in {\cal F}$ such that $s\in F$, and consider the function $f'$ obtained from $f$ by relabeling
each of $s_j$ to $\emptyset$, and by setting $f'(a_F) = \{a\}$ and
$f'(b_F) = \{b\}$ (and leaving all other values unchanged).
It is easy to see that $f'$ is a rainbow $2$-dominating function of smaller total weight than $f$. This is a contradiction with
the minimality of $f$ and proves that
$f(v) = \emptyset$ for all $v\in I$.
This assumption implies that every $v\in I$ has both labels $\{a\}$ and $\{b\}$ in its neighborhood.

The minimality of $f$ implies that for every $F\in {\cal F}$,
at most one of $a_F$ and $b_F$ gets label $\{a\}$. (If both
$a_F$ and $b_F$ would get label $\{a\}$, then replacing one of them with $\emptyset$ would result in a
rainbow $2$-dominating function of $G_J$ of smaller total weight than $f$.)
Similarly, at most one of $a_F$ and $b_F$ gets label $\{b\}$.
Also, by the symmetry of the construction, we may assume
that if one of
$a_F$ and $b_F$ gets label $\{a\}$, then $f(a_F) = \{a\}$, and
that if one of $a_F$ and $b_F$ gets label $\{b\}$, then
$f(b_F) = \{b\}$.
Thus, $A' = \{v\in C: f(v)= \{a\}\}$ and $B' = \{v\in C: f(v)= \{b\}\}$
satisfy $A'\subseteq A$ and $B'\subseteq B$.
Without loss of generality assume that $|A'|\le |B'|$.
We claim that ${\cal F}' = \{F\in {\cal F}: a_F\in A'\}$ is a set cover of ${\cal F}$.
Indeed, if $s\in S$, then the fact that $s_1\in I$ and every
vertex in $I$ has label $\{a\}$ in its neighborhood implies that there is a vertex $a_F\in N(s_1)$ such that
$f(a_F) = \{a\}$, in other words $a_F\in A'$, which implies that $s\in F$ (since $a_F\in N(s_1)$)
and $F\in {\cal F}'$ (since $a_F\in A'$).
Since ${\cal F}'$ is a set cover of ${\cal F}$, it follows that
$${\it OPT}\le |{\cal F}'| = |A'|\le\frac{|A'|+|B'|}{2}  = \frac{f(V(G_J))}{2}= \frac{\rgamatwo(G_J)}{2}\,.$$

This completes the proof of the claim. \hfill $\blacktriangle$

\medskip
Now we can complete the proof of the theorem. Recall that ${\cal A}$ is a polynomial time \hbox{$(1-\epsilon)\ln n$}-approximation algorithm for computing $\rho$ on $n$-vertex split graphs. Using ${\cal A}$, we can design an approximation algorithm for {\sc Set Cover}, transforming an instance $J  = (S,{\cal F})$ to the split graph $G_J$, computing an approximate solution $f$ to $\rho$ on $G_J$, and returning the corresponding set cover ${\cal F}'$ obtained from $f$ as in the above proof of the claim.
Letting $n = |V(G_J)|$, we can bound the size of ${\cal F}'$ from above as
$$
\begin{array}{rcll}
|{\cal F}'| &\le& f(V(G_J))/2 & \textrm{(by the above proof of the claim)}\\
 &\le& (1-\epsilon)(\ln n)\rho(G_J)/2 & \textrm{(since $f$ was computed using the}\\
 &&  & \phantom{(}\textrm{${(1-\epsilon)\ln n}$-approximation algorithm ${\cal A}$)}\\
 &\le& (1-\epsilon)\ln(3(|S|+|{\cal F}|)) {\it OPT} & \textrm{(since
 $n \le 3(|S|+|{\cal F}|)$ and $\rho(G_J) = 2\cdot{\it OPT}$)}\\
 &\le& (1-\epsilon) (1+\epsilon) (\ln|S|) {\it OPT} & \textrm{(by \eqref{eq:set-cover})}\\
 &\le & (1-\epsilon^2) (\ln|S|) {\it OPT}\,, &\\
\end{array}
$$
Therefore, there exists a polynomial time algorithm that computes a
$(1-\epsilon^2)\ln |S|$-approximation to {\sc Set Cover}. By
{Theorem~\ref{thm:Dinur-Steurer}, this is only possible if ${\sf P} = {\sf NP}$}.
\end{proof}
\end{sloppypar}

\subsection{Upper bounds}

The following theorem summarizes the upper bounds on approximability of domination parameters considered in this paper available in the literature:

\begin{thm}[combining results from~\cite{MR686528,MR3027964,MR2024938,MR2960327}]\label{thm:approx-upper-bound-known}
~\begin{enumerate}
  \item For each $\rho\in \{\gama,\gamaxtwo\}$, there is a $(\ln(\Delta(G)+1)+1)$-approximation algorithm for $\rho$.
  \item For each $\rho\in \{\gamat,\gamatxtwo\}$, there is a $(\ln(\Delta(G))+1)$-approximation algorithm for $\rho$.
  \item For each $\rho\in \{\gamatwo,\gamawtwo\}$, there is a $(\ln(\Delta(G)+2)+1)$-approximation algorithm $\rho$.
\end{enumerate}
\end{thm}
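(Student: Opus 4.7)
The plan is to reduce each of the six parameters to a (possibly weighted) instance of the set multi-cover problem and invoke the classical greedy algorithm of Chv\'atal--Johnson--Lov\'asz, together with its multi-cover extension due to Rajagopalan and Vazirani. Both analyses yield an $H(k)$-approximation, where $k$ bounds the maximum size of a covering set and $H(k) = \sum_{i=1}^{k}1/i \le \ln k + 1$. Thus each of the three stated bounds will arise from identifying the correct value of $k$ in the underlying reduction.

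For $\gama$ and $\gamat$ the textbook reduction applies directly: use ground set $V(G)$ with covering family $\{N[v] : v\in V(G)\}$ for domination (maximum set size $\Delta(G)+1$) and $\{N(v) : v\in V(G)\}$ for total domination (maximum set size $\Delta(G)$). Applying greedy gives the claimed bounds of $\ln(\Delta(G)+1)+1$ and $\ln(\Delta(G))+1$. For $\gamaxtwo$ and $\gamatxtwo$ I would keep the same ground set and covering families but demand that every ground element be covered with multiplicity~$2$; the Rajagopalan--Vazirani analysis preserves the $H(k)$ guarantee for multi-cover, with $k$ again equal to $\Delta(G)+1$ or $\Delta(G)$.

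For $\gamatwo$ and $\gamawtwo$ the coverage requirement is conditional (it only binds when $f(v)=0$), so the natural move is to split each vertex $v$ into two copies $v^{(1)}, v^{(2)}$, both of which must be covered. Taking a vertex $u$ into the solution covers \emph{both} copies $u^{(1)}$ and $u^{(2)}$ and contributes one cover to a single copy of each neighbor of $u$. The maximum set size is therefore $\deg(u) + 2 \le \Delta(G)+2$, which produces the $\ln(\Delta(G)+2)+1$ factor. The weighted variant $\gamawtwo$ fits into the same framework by admitting an additional action---``assign weight~$2$ to $u$''---of cost~$2$ that covers only $u^{(1)}$ and $u^{(2)}$; the greedy rule ``maximize coverage per unit cost'' handles weighted sets uniformly and the analysis carries over.

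The main obstacle will be the hybrid weighted/multi-cover structure of $\gamawtwo$: the weight-$1$ and weight-$2$ options at a single vertex are qualitatively different covering actions competing against each other, and I will need to verify that allowing them in parallel does not degrade the greedy ratio beyond $H(\Delta(G)+2)$. Once this is checked, all three items of the theorem follow from plugging the corresponding value of $k$ into the $H(k) \le \ln k + 1$ bound.
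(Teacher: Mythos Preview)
Your treatment of the first four parameters ($\gama$, $\gamat$, $\gamaxtwo$, $\gamatxtwo$) is correct and coincides with what the paper does: model each as set cover or set multi-cover over the families $\{N[v]\}$ or $\{N(v)\}$ and invoke the greedy bound (the paper cites Dobson and Klasing--Laforest rather than Rajagopalan--Vazirani for the multi-cover extension, but the content is the same).

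For $\gamatwo$ you take a somewhat different route from the paper. The paper appeals to the vector-domination framework of Cicalese et al., which is reduced to \textsc{Minimum Submodular Cover} and handled via Wolsey's result; you instead cast $\gamatwo$ directly as a covering integer program in which every vertex has demand~$2$ and selecting $u$ supplies $2$ units to $u$ itself and $1$ unit to each neighbor, giving maximum column sum $\Delta(G)+2$. Both arguments yield $H(\Delta(G)+2)$; yours is more elementary and self-contained, while the paper's route plugs into existing machinery without further work.

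There is, however, a genuine gap in your handling of $\gamawtwo$. You describe the ``assign weight~$2$ to $u$'' action as a cost-$2$ set covering \emph{only} $u^{(1)}$ and $u^{(2)}$. That is not what $f(u)=2$ does in weak $2$-domination: it also contributes $2$ to $f(N(w))$ for every neighbor $w$ of $u$, so the corresponding covering action must supply two units of demand to each neighbor as well. As you have written it, the weight-$2$ action is strictly dominated by the weight-$1$ action, greedy will never select it, and your instance collapses to the $\gamatwo$ instance. The guarantee you obtain is therefore $H(\Delta(G)+2)\cdot\gamatwo(G)$, not $H(\Delta(G)+2)\cdot\gamawtwo(G)$, and the ratio $\gamatwo(G)/\gamawtwo(G)$ is unbounded (stars $K_{1,n}$ give $n/2$). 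The fix is straightforward and closer in spirit to what you already did for $\gamatwo$: allow each weight-$1$ set $S_u$ to be chosen up to twice (equivalently, duplicate every $S_u$). Then the integer optimum of the covering program equals $\gamawtwo(G)$ exactly, the maximum column sum remains $\Delta(G)+2$, and the standard greedy analysis for covering integer programs with bounded multiplicities gives the claimed $H(\Delta(G)+2)\le \ln(\Delta(G)+2)+1$ ratio.
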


The result for domination and total domination follows from the fact that these two problems can be easily modeled as special cases of {\sc Set Cover}. It is well known that a simple greedy algorithm for {\sc Set Cover} produces a solution that is always within a factor $\ln \Delta+1$ of the optimum, where $\Delta$ is the maximum size of a set in ${\cal F}$~\cite{MR686528}. As proved independently by Dobson~\cite{MR686528} and by Klasing and Laforest~\cite{MR2024938}, the same is true for the more general problem in which the task is to find a minimum size subcollection ${\cal F}'\subseteq {\cal F}$ such that every element $s$ appears in at least $k$ sets in ${\cal F}'$ (Dobson's result is in fact more general: each vertex can have a different coverage requirement). In turn, this implies the above-mentioned approximation results for double domination ($\gamaxtwo$) and total double domination ($\gamatxtwo$); see~\cite{MR2024938,MR2960327,MR3027964}.
The result for $2$-domination ($\gamatwo$) can be obtained with a straightforward modification
of the proof of~\cite[Theorem 3]{MR3027964}. That result gives an approximation algorithm for the more general problem
called {\it vector domination} (in which one seeks a small subset $S$ of vertices of a graph such that
any vertex outside $S$ has at least a prescribed number of neighbors in $S$), using a reduction to the so-called
{\sc Minimum Submodular Cover} problem and applying a result of Wolsey~\cite{MR708153}.

Without trying to optimize the obtained approximation ratios, let us simply note that Theorem~\ref{thm:approx-upper-bound-known} and a similar approach to that used in the proof of Theorem~\ref{thm:inapprox-lower-bound} implies the following result.

\begin{thm}\label{thm:approx-upper-bound}
For every $\rho\in \{\gamasettwo, \gamatsettwo, \rgamawtwo, \gamaR, \rgamasettwo, \rgamatsettwo\}$, there is a $2(\ln(\Delta(G)+2)+1)$-approximation algorithm for $\rho$.
\end{thm}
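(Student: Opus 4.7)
The plan is to derive all six approximation algorithms as consequences of Theorem~\ref{thm:approx-upper-bound-known} via Proposition~\ref{prp:reduction}, matching each target parameter $\rho$ with a suitable source parameter $\rho'$ drawn from $\{\gama,\gamat,\gamawtwo\}$ for which an $(\ln(\Delta(G)+2)+1)$-approximation algorithm is already available. What makes this work uniformly is that the bounds recorded in Table~\ref{table-bounds} yield, for every graph $G$ on which both parameters are well defined, two-sided comparisons of the form $c_1\rho(G) \le \rho'(G) \le c_2\rho(G)$ with ratio $c_2/c_1 \le 2$, which is exactly the hypothesis of Proposition~\ref{prp:reduction}.

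Concretely, the matchings I would use are as follows. For $\rho \in \{\gamasettwo, \gamatsettwo\}$ take $\rho' = \gamat$: Table~\ref{table-bounds} gives $\gamat \le \gamasettwo \le 2\gamat$ and $\gamat \le \gamatsettwo \le 2\gamat$, and a total dominating function $f$ is mapped to its double $2f$, a $\{2\}$-dominating (resp.\ total $\{2\}$-dominating) function of weight $2f(V)$, yielding $c_1=1$, $c_2=2$. For $\rho \in \{\rgamawtwo, \gamaR\}$ take $\rho' = \gamawtwo$: the inequalities $\gamawtwo \le \rgamawtwo \le 2\gamawtwo$ and $\gamawtwo \le \gamaR \le 2\gamawtwo$ are in Table~\ref{table-bounds}, and the explicit transformations from a weak $2$-dominating function into a rainbow weak $2$-dominating function (resp.\ Roman dominating function) of weight at most twice the original appear in the proofs of Propositions~\ref{prp:rgamma-w2-gamma-w2} and~\ref{prp:gamma-R-gamma-w-2}, so these are clearly polynomial. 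Finally, for $\rho \in \{\rgamasettwo, \rgamatsettwo\}$ the identities $\rgamasettwo = 2\gama$ and $\rgamatsettwo = 2\gamat$ from Section~\ref{sec:definitions} give $c_1 = c_2 = 2$, with the trivial conversion sending a (total) dominating set $D$ to the labeling $v \mapsto \{a,b\}$ on $D$ and $\emptyset$ elsewhere; for these two parameters an even stronger $(\ln(\Delta(G)+2)+1)$-approximation follows, which is well within the claimed bound.

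Applying Proposition~\ref{prp:reduction} in each case yields an approximation ratio of $(c_2/c_1)\cdot(\ln(\Delta(G)+2)+1) \le 2(\ln(\Delta(G)+2)+1)$, since each source approximation ratio from Theorem~\ref{thm:approx-upper-bound-known} is itself bounded by $\ln(\Delta(G)+2)+1$. I do not expect a genuine obstacle: the only thing to verify carefully is that the inequalities from Table~\ref{table-bounds}, proved non-algorithmically in Section~\ref{sec:proofs-bounds}, can be upgraded into polynomial-time routines converting a feasible solution of the source parameter into a feasible solution of the target parameter of weight at most $c_2$ times larger. This is immediate by inspection of the relevant proofs, which are constructive throughout.
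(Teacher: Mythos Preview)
Your proposal is correct and follows essentially the same approach the paper intends: the paper itself only says that Theorem~\ref{thm:approx-upper-bound} follows from Theorem~\ref{thm:approx-upper-bound-known} by an argument ``similar to that used in the proof of Theorem~\ref{thm:inapprox-lower-bound}'', i.e., via Proposition~\ref{prp:reduction} and the sandwiching inequalities $\gama \le \rgamawtwo, \gamaR \le 2\gama$ and $\gamat \le \gamasettwo, \gamatsettwo \le 2\gamat$. Your choice of $\gamawtwo$ rather than $\gama$ as the source for $\rgamawtwo$ and $\gamaR$ is a harmless variation. One small point: your labeling of ``source'' as $\rho'$ and ``target'' as $\rho$ is the reverse of the convention in Proposition~\ref{prp:reduction}, so the displayed inequality $c_1\rho(G)\le \rho'(G)\le c_2\rho(G)$ is not literally the hypothesis there; the underlying argument is unaffected, but you should align the notation when writing it up.
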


To the best of our knowledge, these are the first results regarding approximation algorithms for any of these parameters.
Development of approximation algorithms for rainbow $2$-domination and rainbow double domination
remains an open question.

\bigskip
We conclude with the following related questions, which we leave for future research:
\begin{itemize}
  \item Can the factors $1/2-\epsilon$ in the inapproximability bounds from Theorem~\ref{thm:inapprox-lower-bound} be improved to $1-\epsilon$? Possible approaches to this question include a development of direct reductions from {\sc Set Cover} and a study of the inequalities relating the relevant parameters in the class of split graphs.
 \item Can the approximation ratios given by Theorem~\ref{thm:approx-upper-bound} be further improved?
 \item The only known inapproximability bound for the rainbow $2$-domination and rainbow double domination problems are those given by Theorem~\ref{thm:inapprox}, and no nontrivial approximation algorithms for these two problems are known.
It would be interesting to settle the \hbox{(in-)approximability} status of these two problems.
The case of the rainbow double domination number $\rgamaxtwo(G)$ of a graph $G$ without isolated vertices is particularly interesting,
because the parameter coincides with the previously studied disjoint domination number $\gamma\gamma(G)$ (cf.~Proposition~\ref{prp:disjoint-domination}).
\end{itemize}

%%%%%%%%%%%%%%%%%%%%%%%%%%%%%%%% REFERENCES
\subsection*{Acknowledgements}

The work for this paper was done in the framework of a bilateral project between Argentina and Slovenia, financed by the Slovenian Research Agency (BI-AR/$12$--$14$--$012$, BI-AR/$12$--$14$--$013$) and MINCyT, Argentina (SLO/11/12 and SLO/11/13). F.\ Bonomo, L.N.\ Grippo, and M.D.\ Safe were partially supported by UBACyT Grant 20020130100808BA, CONICET PIP 112-201201-00450CO, and ANPCyT PICT-2012-1324 (Argentina).
F. Bonomo was partially supported by ANPCyT PICT-2015-2218 (Argentina). L.N.\ Grippo and M.D.\ Safe were partially supported by PIO CONICET UNGS 144-20140100027-CO (Argentina).
B.\ Bre\v sar was supported in part by the Slovenian Research Agency (ARRS) under the grant P$1$-$0297$. Research of M.\ Milani\v c was supported in part by the Slovenian Research Agency (I0-0035, research program P$1$-$0285$ and research projects N$1$-$0032$, J$1$-$5433$, J$1$-$6720,$ J$1$-$6743$, and J$1$-$7051$).

\bibliographystyle{abbrv}
\bibliography{2-domination}

\begin{thebibliography}{10}

\bibitem{MR2766902}
B.~D. Acharya.
\newblock Domination in hypergraphs {II}. {N}ew directions.
\newblock In {\em Advances in discrete mathematics and applications: {M}ysore,
  2008}, volume~13 of {\em Ramanujan Math. Soc. Lect. Notes Ser.}, pages 1--18.
  Ramanujan Math. Soc., Mysore, 2010.

\bibitem{atr-2012}
M.~Adabi, E.~E. Targhi, N.~J. Rad, and M.~S. Moradi.
\newblock Properties of independent {R}oman domination in graphs.
\newblock {\em Australas. J. Combin.}, 52:11--18, 2012.

\bibitem{MR2980859}
V.~Anusuya and R.~Kala.
\newblock A note on disjoint dominating sets in graphs.
\newblock {\em Int. J. Contemp. Math. Sci.}, 7(41-44):2099--2110, 2012.

\bibitem{asv-2013}
H.~Aram, S.~M. Sheikholeslami, and L.~Volkmann.
\newblock On the total {$\{k\}$}-domination and total {$\{k\}$}-domatic number
  of graphs.
\newblock {\em Bull. Malays. Math. Sci. Soc. (2)}, 36(1):39--47, 2013.

\bibitem{MR1734026}
G.~Ausiello, P.~Crescenzi, G.~Gambosi, V.~Kann, A.~Marchetti-Spaccamela, and
  M.~Protasi.
\newblock {\em Complexity and {A}pproximation}.
\newblock Springer-Verlag, Berlin, 1999.

\bibitem{bmp-13}
S.~C. Barman, S.~Mondal, and M.~Pal.
\newblock Minimum 2-tuple dominating set of permutation graphs.
\newblock {\em J. Appl. Math. Comput.}, 43(1-2):133--150, 2013.

\bibitem{bcf-2007}
M.~Blidia, M.~Chellali, and O.~Favaron.
\newblock Ratios of some domination parameters in graphs and claw-free graphs.
\newblock In {\em Graph theory in {P}aris}, Trends Math., pages 61--72.
  Birkh\"auser, Basel, 2007.

\bibitem{bhk-2007}
B.~Bre{\v{s}}ar, M.~A. Henning, and S.~Klav{\v{z}}ar.
\newblock On integer domination in graphs and {V}izing-like problems.
\newblock {\em Taiwanese J. Math.}, 10(5):1317--1328, 2006.

\bibitem{bhr-2008}
B.~Bre{\v{s}}ar, M.~A. Henning, and D.~F. Rall.
\newblock Rainbow domination in graphs.
\newblock {\em Taiwanese J. Math.}, 12(1):213--225, 2008.

\bibitem{bks-2007}
B.~Bre{\v{s}}ar and T.~Kraner~{\v{S}}umenjak.
\newblock On the 2-rainbow domination in graphs.
\newblock {\em Discrete Appl. Math.}, 155(17):2394--2400, 2007.

\bibitem{cp-2014}
Y.~Caro and R.~Pepper.
\newblock Degree sequence index strategy.
\newblock {\em Australas. J. Combin.}, 59:1--23, 2014.

\bibitem{clw-2013}
G.~J. Chang, B.-J. Li, and J.~Wu.
\newblock Rainbow domination and related problems on strongly chordal graphs.
\newblock {\em Discrete Appl. Math.}, 161(10-11):1395--1401, 2013.

\bibitem{cwx-2010}
G.~J. Chang, J.~Wu, and X.~Zhu.
\newblock Rainbow domination on trees.
\newblock {\em Discrete Appl. Math.}, 158(1):8--12, 2010.

\bibitem{chh-2014}
M.~Chellali, T.~W. Haynes, and S.~T. Hedetniemi.
\newblock Bounds on weak roman and 2-rainbow domination numbers.
\newblock {\em Discrete Appl. Math.}, 178:27--32, 2014.

\bibitem{MR3097710}
M.~Chellali and N.~Jafari~Rad.
\newblock On 2-rainbow domination and {R}oman domination in graphs.
\newblock {\em Australas. J. Combin.}, 56:85--93, 2013.

\bibitem{ckm-2005}
M.~Chellali, A.~Khelladi, and F.~Maffray.
\newblock Exact double domination in graphs.
\newblock {\em Discuss. Math. Graph Theory}, 25(3):291--302, 2005.

\bibitem{MR2457654}
M.~Chleb{\'{\i}}k and J.~Chleb{\'{\i}}kov{\'a}.
\newblock Approximation hardness of dominating set problems in bounded degree
  graphs.
\newblock {\em Inform. and Comput.}, 206(11):1264--1275, 2008.

\bibitem{cmh-15}
K.~Choudhary, S.~Margulies, and I.~V. Hicks.
\newblock Integer domination of {C}artesian product graphs.
\newblock {\em Discrete Math.}, 338(7):1239--1242, 2015.

\bibitem{MR3027964}
F.~Cicalese, M.~Milani{\v{c}}, and U.~Vaccaro.
\newblock On the approximability and exact algorithms for vector domination and
  related problems in graphs.
\newblock {\em Discrete Appl. Math.}, 161(6):750--767, 2013.

\bibitem{cdhh-2004}
E.~J. Cockayne, P.~A. Dreyer, Jr., S.~M. Hedetniemi, and S.~T. Hedetniemi.
\newblock Roman domination in graphs.
\newblock {\em Discrete Math.}, 278(1-3):11--22, 2004.

\bibitem{ch-1977}
E.~J. Cockayne and S.~T. Hedetniemi.
\newblock Towards a theory of domination in graphs.
\newblock {\em Networks}, 7(3):247--261, 1977.

\bibitem{dgh-2011}
E.~DeLaVi{\~n}a, W.~Goddard, M.~A. Henning, R.~Pepper, and E.~R. Vaughan.
\newblock Bounds on the {$k$}-domination number of a graph.
\newblock {\em Appl. Math. Lett.}, 24(6):996--998, 2011.

\bibitem{dhv-13}
W.~J. Desormeaux, T.~W. Haynes, and L.~Vaughan.
\newblock Double domination in complementary prisms.
\newblock {\em Util. Math.}, 91:131--142, 2013.

\bibitem{MR3238990}
I.~Dinur and D.~Steurer.
\newblock Analytical approach to parallel repetition.
\newblock In {\em S{TOC}'14---{P}roceedings of the 2014 {ACM} {S}ymposium on
  {T}heory of {C}omputing}, pages 624--633. ACM, New York, 2014.

\bibitem{MR686528}
G.~Dobson.
\newblock Worst-case analysis of greedy heuristics for integer programming with
  nonnegative data.
\newblock {\em Math. Oper. Res.}, 7(4):515--531, 1982.

\bibitem{dhl-1991}
G.~S. Domke, S.~T. Hedetniemi, R.~C. Laskar, and G.~Fricke.
\newblock Relationships between integer and fractional parameters of graphs.
\newblock In {\em Graph theory, combinatorics, and applications, {V}ol.\ 1
  ({K}alamazoo, {MI}, 1988)}, Wiley-Intersci. Publ., pages 371--387. Wiley, New
  York, 1991.

\bibitem{dhh-2014}
P.~Dorbec, B.~Hartnell, and M.~A. Henning.
\newblock Paired versus double domination in \hbox{{$K_{1,r}$}-free} graphs.
\newblock {\em J. Comb. Optim.}, 27(4):688--694, 2014.

\bibitem{MR2701485}
P.~A. Dreyer, Jr.
\newblock {\em Applications and variations of domination in graphs}.
\newblock ProQuest LLC, Ann Arbor, MI, 2000.
\newblock Thesis (Ph.D.)--Rutgers The State University of New Jersey - New
  Brunswick.

\bibitem{fhv-2008}
O.~Favaron, A.~Hansberg, and L.~Volkmann.
\newblock On {$k$}-domination and minimum degree in graphs.
\newblock {\em J. Graph Theory}, 57(1):33--40, 2008.

\bibitem{MR1675095}
U.~Feige.
\newblock A threshold of {$\ln n$} for approximating set cover.
\newblock {\em J. ACM}, 45(4):634--652, 1998.

\bibitem{MR2376019}
H.~Fernau.
\newblock Roman domination: a parameterized perspective.
\newblock {\em Int. J. Comput. Math.}, 85(1):25--38, 2008.

\bibitem{fj-1985}
J.~F. Fink and M.~S. Jacobson.
\newblock {$n$}-domination in graphs.
\newblock In {\em Graph theory with applications to algorithms and computer
  science ({K}alamazoo, {M}ich., 1984)}, Wiley-Intersci. Publ., pages 283--300.
  Wiley, New York, 1985.

\bibitem{MR0463041}
S.~F{\"o}ldes and P.~L. Hammer.
\newblock Split graphs having {D}ilworth number two.
\newblock {\em Canad. J. Math.}, 29(3):666--672, 1977.

\bibitem{FF-2012}
S.~Fujita and M.~Furuya.
\newblock Difference between {$2$}-rainbow domination and {R}oman domination in
  graphs.
\newblock {\em Discrete Appl. Math.}, 161(6):806--812, 2013.

\bibitem{MR3320720}
M.~Furuya.
\newblock A note on total domination and 2-rainbow domination in graphs.
\newblock {\em Discrete Appl. Math.}, 184:229--230, 2015.

\bibitem{GHKM2003}
M.~Gairing, S.~Hedetniemi, P.~Kristiansen, and A.~McRae.
\newblock Self-stabilizing algorithms for $\{k\}$-domination.
\newblock In S.-T. Huang and T.~Herman, editors, {\em Self-Stabilizing
  Systems}, volume 2704 of {\em Lecture Notes in Computer Science}, pages
  49--60. Springer Berlin Heidelberg, 2003.

\bibitem{Gallai1957}
T.~Gallai.
\newblock Gr\'{a}fokkal kapcsolatos maximum-minimum t\'{e}telek (i.~r\'{e}sz).
\newblock {\em Magyar Tud. Akad. Mat. Fiz. Oszt. K\"ozl.}, 7:305--338, 1957.

\bibitem{Gallai1958}
T.~Gallai.
\newblock Gr\'{a}fokkal kapcsolatos maximum-minimum t\'{e}telek (ii.~r\'{e}sz).
\newblock {\em Magyar Tud. Akad. Mat. Fiz. Oszt. K\"ozl.}, 8:1--40, 1958.

\bibitem{Gallai1958b}
T.~Gallai.
\newblock Maximum-minimum {S}\"{a}tze \"{u}ber {G}raphen.
\newblock {\em Acta Math. Acad. Sci. Hungar.}, 9:395--434, 1958.

\bibitem{Gallai1959}
T.~Gallai.
\newblock \"{U}ber extreme {P}unkt- und {K}antenmengen.
\newblock {\em Ann. Univ. Sci. Budapest. E\"{o}tv\"{o}s Sect. Math.},
  2:133--138, 1959.

\bibitem{MR519066}
M.~R. Garey and D.~S. Johnson.
\newblock {\em Computers and {I}ntractability}.
\newblock W. H. Freeman and Co., San Francisco, Calif., 1979.
\newblock A guide to the theory of NP-completeness, A Series of Books in the
  Mathematical Sciences.

\bibitem{hp-2013}
A.~Hansberg and R.~Pepper.
\newblock On {$k$}-domination and {$j$}-independence in graphs.
\newblock {\em Discrete Appl. Math.}, 161(10-11):1472--1480, 2013.

\bibitem{haha-96}
F.~Harary and T.~W. Haynes.
\newblock Nordhaus-{G}addum inequalities for domination in graphs.
\newblock {\em Discrete Math.}, 155(1-3):99--105, 1996.
\newblock Combinatorics (Acireale, 1992).

\bibitem{hh-2000}
F.~Harary and T.~W. Haynes.
\newblock Double domination in graphs.
\newblock {\em Ars Combin.}, 55:201--213, 2000.

\bibitem{hr-2004}
B.~L. Hartnell and D.~F. Rall.
\newblock On dominating the {C}artesian product of a graph and {$K_2$}.
\newblock {\em Discuss. Math. Graph Theory}, 24(3):389--402, 2004.

\bibitem{hhs2}
T.~W. Haynes, S.~T. Hedetniemi, and P.~J. Slater, editors.
\newblock {\em Domination in {G}raphs. {A}dvanced {T}opics}, volume 209 of {\em
  Monographs and Textbooks in Pure and Applied Mathematics}.
\newblock Marcel Dekker, Inc., New York, 1998.

\bibitem{hhs-1998}
T.~W. Haynes, S.~T. Hedetniemi, and P.~J. Slater, editors.
\newblock {\em Fundamentals of {D}omination in {G}raphs}, volume 208 of {\em
  Monographs and Textbooks in Pure and Applied Mathematics}.
\newblock Marcel Dekker, Inc., New York, 1998.

\bibitem{hs-1998}
T.~W. Haynes and P.~J. Slater.
\newblock Paired-domination in graphs.
\newblock {\em Networks}, 32(3):199--206, 1998.

\bibitem{MR2500476}
S.~M. Hedetniemi, S.~T. Hedetniemi, R.~C. Laskar, L.~Markus, and P.~J. Slater.
\newblock Disjoint dominating sets in graphs.
\newblock In {\em Discrete mathematics}, volume~7 of {\em Ramanujan Math. Soc.
  Lect. Notes Ser.}, pages 87--100. Ramanujan Math. Soc., Mysore, 2008.

\bibitem{MR3220299}
S.~T. Hedetniemi, R.~R. Rubalcaba, P.~J. Slater, and M.~Walsh.
\newblock Few compare to the great {R}oman empire.
\newblock {\em Congr. Numer.}, 217:129--136, 2013.

\bibitem{MR1991720}
M.~A. Henning and S.~T. Hedetniemi.
\newblock Defending the {R}oman {E}mpire---a new strategy.
\newblock {\em Discrete Math.}, 266(1-3):239--251, 2003.

\bibitem{hk-2010}
M.~A. Henning and A.~P. Kazemi.
\newblock {$k$}-tuple total domination in graphs.
\newblock {\em Discrete Appl. Math.}, 158(9):1006--1011, 2010.

\bibitem{MR2558608}
M.~A. Henning, C.~L{\"o}wenstein, and D.~Rautenbach.
\newblock Remarks about disjoint dominating sets.
\newblock {\em Discrete Math.}, 309(23-24):6451--6458, 2009.

\bibitem{hy-2010}
M.~A. Henning and A.~Yeo.
\newblock Strong transversals in hypergraphs and double total domination in
  graphs.
\newblock {\em SIAM J. Discrete Math.}, 24(4):1336--1355, 2010.

\bibitem{MR3055232}
M.~A. Henning and A.~Yeo.
\newblock 2-colorings in {$k$}-regular {$k$}-uniform hypergraphs.
\newblock {\em European J. Combin.}, 34(7):1192--1202, 2013.

\bibitem{HYbook}
M.~A. Henning and A.~Yeo.
\newblock {\em Total {D}omination in {G}raphs}.
\newblock Springer Monographs in Mathematics. Springer, New York, 2013.

\bibitem{hl-2009}
X.~Hou and Y.~Lu.
\newblock On the {$\{k\}$}-domination number of {C}artesian products of graphs.
\newblock {\em Discrete Math.}, 309(10):3413--3419, 2009.

\bibitem{ns-2013}
N.~John and S.~Suen.
\newblock Graph products and integer domination.
\newblock {\em Discrete Math.}, 313(3):217--224, 2013.

\bibitem{MR2546895}
B.~K. Jose and Z.~Tuza.
\newblock Hypergraph domination and strong independence.
\newblock {\em Appl. Anal. Discrete Math.}, 3(2):347--358, 2009.

\bibitem{MR2024938}
R.~Klasing and C.~Laforest.
\newblock Hardness results and approximation algorithms of {$k$}-tuple
  domination in graphs.
\newblock {\em Inform. Process. Lett.}, 89(2):75--83, 2004.

\bibitem{kr-13}
M.~Krzywkowski.
\newblock On trees with double domination number equal to 2-domination number
  plus one.
\newblock {\em Houston J. Math.}, 39(2):427--440, 2013.

\bibitem{Kulli-2014}
V.~R. Kulli.
\newblock Inverse domination and inverse total domination in digraphs.
\newblock {\em International Journal of Advanced Research in Computer Science
  \& Technology}, 2(1):106--109, 2014.

\bibitem{lh-2009}
N.~Li and X.~Hou.
\newblock On the total {$\{k\}$}-domination number of {C}artesian products of
  graphs.
\newblock {\em J. Comb. Optim.}, 18(2):173--178, 2009.

\bibitem{lc-2012}
C.-H. Liu and G.~J. Chang.
\newblock Roman domination on $2$-connected graphs.
\newblock {\em SIAM J. Discrete Math.}, 26(1):193--205, 2012.

\bibitem{MR3097071}
C.-H. Liu and G.~J. Chang.
\newblock Roman domination on strongly chordal graphs.
\newblock {\em J. Comb. Optim.}, 26(3):608--619, 2013.

\bibitem{MR2646135}
C.~L{\"o}wenstein and D.~Rautenbach.
\newblock Pairs of disjoint dominating sets and the minimum degree of graphs.
\newblock {\em Graphs Combin.}, 26(3):407--424, 2010.

\bibitem{MR2967224}
P.~Pavli{\v{c}} and J.~{\v{Z}}erovnik.
\newblock Roman domination number of the {C}artesian products of paths and
  cycles.
\newblock {\em Electron. J. Combin.}, 19(3):Paper 19, 37, 2012.

\bibitem{pps-2012}
M.~Pilipczuk, M.~Pilipczuk, and R.~{\v{S}}krekovski.
\newblock Some results on {V}izing's conjecture and related problems.
\newblock {\em Discrete Appl. Math.}, 160(16-17):2484--2490, 2012.

\bibitem{MR2960327}
D.~Pradhan.
\newblock Algorithmic aspects of {$k$}-tuple total domination in graphs.
\newblock {\em Inform. Process. Lett.}, 112(21):816--822, 2012.

\bibitem{rad-2011}
N.~J. Rad.
\newblock Critical concept for 2-rainbow domination in graphs.
\newblock {\em Australas. J. Combin.}, 51:49--60, 2011.

\bibitem{sala-96}
E.~Sampathkumar and L.~P. Latha.
\newblock Strong weak domination and domination balance in a graph.
\newblock {\em Discrete Math.}, 161(1-3):235--242, 1996.

\bibitem{Schrijver03}
A.~Schrijver.
\newblock {\em Combinatorial Optimization. {P}olyhedra and Efficiency (3
  volumes)}, volume~24 of {\em Algorithms and Combinatorics}.
\newblock Springer--Verlag, Berlin, 2003.

\bibitem{MR2725878}
W.~Shang, X.~Wang, and X.~Hu.
\newblock Roman domination and its variants in unit disk graphs.
\newblock {\em Discrete Math. Algorithms Appl.}, 2(1):99--105, 2010.

\bibitem{sly-2014}
Z.~Shao, M.~Liang, C.~Yin, X.~Xu, P.~Pavli{\v{c}}, and J.~{\v{Z}}erovnik.
\newblock On rainbow domination numbers of graphs.
\newblock {\em Inform. Sci.}, 254:225--234, 2014.

\bibitem{st-1999}
I.~Stewart.
\newblock Defend the roman empire!
\newblock {\em Sci. Am.}, 281:136--139, 1999.

\bibitem{srt-2013}
T.~K. {\v{S}}umenjak, D.~F. Rall, and A.~Tepeh.
\newblock Rainbow domination in the lexicographic product of graphs.
\newblock {\em Discrete Appl. Math.}, 161(13-14):2133--2141, 2013.

\bibitem{MR1851303}
V.~V. Vazirani.
\newblock {\em Approximation {A}lgorithms}.
\newblock Springer-Verlag, Berlin, 2001.

\bibitem{MR708153}
L.~A. Wolsey.
\newblock An analysis of the greedy algorithm for the submodular set covering
  problem.
\newblock {\em Combinatorica}, 2(4):385--393, 1982.

\bibitem{wx-2010}
Y.~Wu and H.~Xing.
\newblock Note on 2-rainbow domination and {R}oman domination in graphs.
\newblock {\em Appl. Math. Lett.}, 23(6):706--709, 2010.

\bibitem{zel-1988}
B.~Zelinka.
\newblock Total domatic number of cacti.
\newblock {\em Math. Slovaca}, 38(3):207--214, 1988.

\bibitem{zel-1998}
B.~Zelinka.
\newblock Domatic numbers of graphs and their variants: a survey.
\newblock In {\em Domination in graphs}, volume 209 of {\em Monogr. Textbooks
  Pure Appl. Math.}, pages 351--377. Dekker, New York, 1998.

\end{thebibliography}

\end{document}